\newtheorem{proposition}{Proposition}[section]
\newtheorem{theorem}[proposition]{Theorem}
\newtheorem{corollary}[proposition]{Corollary}
\newtheorem{lemma}[proposition]{Lemma}
\newtheorem{remark}[proposition]{Remark}
\numberwithin{equation}{section}
\newcommand{\nc}{\newcommand}
\nc{\R}{{\mathbb R}}
\nc{\bS}{{\mathbb S}^{d-1}}
\nc{\N}{{\mathbb N}}
\nc{\Z}{{\mathbb Z}}
\nc{\BP}{\mathbb{P}}
\nc{\BE}{\mathbb{E}}
\nc{\BQ}{\mathbb{Q}}
\nc{\BM}{\mathbb{M}}
\nc{\bN}{{\mathbf N}}
\nc{\BX}{{\mathbb X}}
\nc{\BY}{{\mathbb Y}}
\nc{\cH}{{\mathcal H}}
\nc{\cB}{{\mathcal B}}
\nc{\cX}{{\mathcal X}}
\nc{\cN}{{\mathcal N}}
\nc{\cY}{{\mathcal Y}}
\nc{\dint}{{\rm d}}
\def\keywords{\xdef\@thefnmark{}\@footnotetext}
\title{Poisson approximation  with applications to  stochastic geometry}
\author{
	
	Federico Pianoforte\footnotemark[1] \, and \,  Matthias Schulte\footnotemark[2] }
\begin{document}
	
\footnotetext[1]{federico.pianoforte@stat.unibe.ch;  University of Bern, Institute of Mathematical Statistics and Actuarial Science.}
\footnotetext[2]{matthias.schulte@tuhh.de;  Hamburg University of Technology,  Institute of Mathematics.}
\maketitle	

\keywords{ MSC2010 subject classifications. Primary 	60F05; secondary 60D05, 60G70, 60G55.}%
\keywords{ \emph{Keywords:} Poisson approximation, Chen-Stein method, size-bias coupling, exponential approximation, stochastic geometry, $U$-statistics, Voronoi tessellations, runs, extremes.}%

\begin{abstract}
This article compares the distributions of integer-valued random variables and Poisson random variables. 
It considers the total variation and the Wasserstein distance and provides, in particular, explicit bounds on the pointwise difference between the cumulative distribution functions. Special attention is dedicated to estimating the difference when the cumulative distribution functions are evaluated at 0. This permits to approximate the minimum (or maximum) of a collection of random variables by a suitable random variable in the Kolmogorov distance. The main theoretical results are obtained by  combining the Chen-Stein method with size-bias coupling and a generalization of size-bias coupling for integer-valued random variables developed herein. A wide variety of applications are then discussed with a focus on stochastic geometry.
\end{abstract}

\section{Introduction and main results} 
Let $X$ be a random variable taking values in $\N_0 = \N \cup \{0\}$ and let $P_\lambda$ be a Poisson  random variable with mean $\lambda > 0$. In this article we employ Stein's method, size-bias coupling and a generalization of size-bias coupling for integer-valued random variables developed herein to compare the distributions of $X$ and $P_\lambda$. We derive upper bounds on the total variation distance 
\begin{align*}
d_{TV}( X , P_\lambda )
 =  \underset{A\subset \N_0}{\operatorname{sup} }\,\vert  \mathbb{P}( X \in A) - \mathbb{P}(P_\lambda \in A) \vert
\end{align*}
and the Wasserstein distance
\begin{align*}
d_{W}( X , P_\lambda )
 =  \underset{g\in\mathrm{Lip}(1)}{\operatorname{sup} }\,\vert \mathbb{E}[g(X)] - \mathbb{E}[g(P_{\lambda}) ] \vert
\end{align*}
between $X$ and $P_\lambda $, where $\mathrm{Lip}(1)$ denotes the set of all Lipschitz functions $g:\N_0\rightarrow \R$ with Lipschitz constant bounded by $1$. In addition, we establish bounds on the pointwise differences 
\begin{align*}
\big\vert \mathbb{P} (X\leq v) - \mathbb{P}(P_\lambda \leq v) \big\vert ,\quad v\in\N_0,
\end{align*}
between the cumulative distribution functions of $X$ and $P_\lambda$, which are smaller than those for the total variation distance. Particular attention is paid to the case $v = 0$. This permits to approximate the minimum (or maximum) of a collection of random variables by a suitable random variable in the Kolmogorov distance. For example, let $\lambda_d$ denote the Lebesgue measure on $\R^d$, let $k_d$ stand for the volume of the $d$-dimensional unit ball, and let  $\eta_t$ be a Poisson process on $\R^d$ with intensity measure $t\lambda_d$, $t>0$. From the aforementioned bounds for $v=0$ we deduce that the random variable $Y_t$ given by
$$
Y_t 
= \underset{(x,y)\in \eta_{t,\neq}^{2} \, : \, \frac{x+y}{2}\in [0,1]^d}{\mathrm{min}}\,  2^{-1} t^2 k_d \Vert x-y \Vert^d ,
$$
which is the rescaled  minimum (Euclidean) distance between pairs of points of $\eta_t$ with midpoint in $[0,1]^d$, satisfies
\begin{equation}\label{eq:InterpointDistances}
0 
\leq \mathbb{P}(Y_t>u) - \mathbb{P}(E_1>u)
\leq \frac{81}{t}
\end{equation} 
for  $u\geq0$ (see Theorem \ref{Thm:min interp dist}), where $E_1$ denotes an exponentially distributed random variable with mean $1$. This is possible because $\mathbb{P}(Y_t >u)$ can be written as $\mathbb{P}(X_u=0)$ with
$$
X_u = \frac{1}{2} \sum_{(x,y)\in\eta_{t,\neq}^{2}} \mathbf{1}\Big\{\frac{x+y}{2}\in [0,1]^d , 2^{-1} t^2 k_d \Vert x-y \Vert^d \in [0,u]\Big\} 
$$
and $\mathbb{P}( E_1 > u ) = \mathbb{P}(P_u =0)$. By estimating $\vert \mathbb{P}(X_u=0) - \mathbb{P}(P_u =0) \vert$ uniformly for all $u\geq 0$, one obtains \eqref{eq:InterpointDistances}, which provides a bound on the Kolmogorov distance 
$$
d_{K}(Y_t, E_1)
= \underset{u\in\R}{\operatorname{sup} }\,\vert  \mathbb{P}(Y_t > u ) - \mathbb{P}(E_1 > u) \vert
$$
between $Y_t$ and $E_1$. 

Let us now give precise formulations of our main results.  We use the shorthand notation $a\wedge b=\min\{a,b\}$ for $a,b\in\R$, and we indicate by $W_+$ and $W_-$ the positive and negative part of a random variable $W$, respectively. Whenever we write $\alpha>0$, it is understood that $\alpha\in (0,\infty)$.
\begin{theorem}\label{Thmmm}
	Let $X$ be a random variable taking values in $\mathbb{N}_0$ and let $P_{\lambda}$ be a Poisson random variable with mean $\lambda=\mathbb{E}[X]>0$. Assume there exists a random variable $Z$ defined on the same probability space as $X$ with values in $\Z$ such that 
	\begin{align}\label{Mek}
	i\mathbb{P}(X=i)  
	= \lambda \mathbb{P}(X + Z = i-1), \quad i\in\N,
	\end{align}
	is satisfied. Then, 
	\begin{align}\label{fsxx} 
	& d_{TV}(X,P_\lambda) 
	\leq (1 \wedge \lambda ) \mathbb{E}[ \vert Z \vert ] \quad \text{ and } \quad  d_{W}(X,P_\lambda) 
	\leq (1.1437 \sqrt{\lambda} \wedge \lambda )\mathbb{E}[ \vert Z \vert ].
	\end{align}
	Furthermore for all $m\in\mathbb{N}_0$,
	\begin{align}\label{rrrf} 
	\vert \mathbb{P}(X=0) - \mathbb{P}(P_\lambda =0) \vert 
	\leq \frac{m!}{\lambda^{m}}\mathbb{E}[ \vert Z \vert ] + \sum_{k=0}^{m-1} \left( \frac{\lambda}{k+1} \wedge \frac{k!}{\lambda^{k}} \right) \mathbb{E}\big[ \vert Z \vert \mathbf{1}\{ X- Z_{-} = k \} \big] 
	\end{align}
	and for all $v\in \N$,
	\begin{align}\label{eq:boundOnv} 
	\vert \mathbb{P}(X\leq v) - \mathbb{P}(P_\lambda \leq v) \vert 
	\leq \frac{(v+1)^2}{\lambda}\mathbb{E}[ \vert Z \vert ] +  \mathbb{E}\big[ \vert Z \vert \mathbf{1}\{ X- Z_{-} \leq  v\} \big] .  
	\end{align}
\end{theorem}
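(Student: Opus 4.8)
The plan is to run the Chen--Stein method for Poisson approximation, fed by the biasing relation that \eqref{Mek} encodes. Multiplying \eqref{Mek} by a bounded $f\colon\N_0\to\R$ and summing over $i\in\N$ gives $\mathbb{E}[Xf(X)]=\lambda\,\mathbb{E}[f(X+Z+1)]$; taking $f\equiv1$ and using $\mathbb{E}[X]=\lambda$ shows $\mathbb{P}(X+Z\ge0)=1$, so $X+Z+1\ge1$ and $M:=X-Z_-+1=\min\{X+1,X+Z+1\}\ge1$ almost surely. Writing $g_h$ for the solution of the Poisson--Stein equation $\lambda g_h(k+1)-kg_h(k)=h(k)-\mathbb{E}[h(P_\lambda)]$, $k\in\N_0$, with $g_h(0)=0$, we have $\mathbb{E}[h(X)]-\mathbb{E}[h(P_\lambda)]=\mathbb{E}[\lambda g_h(X+1)-Xg_h(X)]$, and inserting the biasing relation turns the right-hand side into $\lambda\,\mathbb{E}[g_h(X+1)-g_h(X+Z+1)]$. (If $\mathbb{E}|Z|=\infty$ all four bounds are vacuous, so we may assume $\mathbb{E}|Z|<\infty$; boundedness of the Stein solutions then justifies the rearrangement.) Since $|(X+1)-(X+Z+1)|=|Z|$, a telescoping estimate gives $|g_h(X+1)-g_h(X+Z+1)|\le|Z|\sup_k|g_h(k+1)-g_h(k)|$, so \eqref{fsxx} follows by applying this with $h=\mathbf{1}_A$ and with $h\in\mathrm{Lip}(1)$ and invoking the classical Poisson Stein-factor bounds $\lambda\sup_k|g_h(k+1)-g_h(k)|\le1-e^{-\lambda}\le1\wedge\lambda$ for an indicator $h$, respectively $\le1.1437\sqrt{\lambda}\wedge\lambda$ for $h\in\mathrm{Lip}(1)$, and taking the supremum over $A$ and over $\mathrm{Lip}(1)$.

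For \eqref{rrrf} I would use the explicit solution $g_{\{0\}}(j)=\int_0^1u^{j-1}e^{-\lambda u}\,\dint u$ ($j\ge1$), which is positive, non-increasing on $\N$, and satisfies $g_{\{0\}}(j)\le\tfrac1j\wedge\tfrac{(j-1)!}{\lambda^j}$. Since $g_{\{0\}}$ is non-increasing and $X+1,X+Z+1\ge M$, monotonicity gives $|g_{\{0\}}(X+1)-g_{\{0\}}(X+Z+1)|=g_{\{0\}}(M)-g_{\{0\}}(M+|Z|)\le g_{\{0\}}(M)\mathbf{1}\{Z\ne0\}$. Plugging this into the identity with $h=\mathbf{1}_{\{0\}}$, splitting the resulting expectation over $\{M\le m\}$ and $\{M\ge m+1\}$, bounding $\lambda g_{\{0\}}(M)\le\tfrac\lambda M\wedge\tfrac{(M-1)!}{\lambda^{M-1}}$ on the first event and $\lambda g_{\{0\}}(M)\le\lambda g_{\{0\}}(m+1)\le\tfrac{m!}{\lambda^m}$ on the second, and using $\mathbf{1}\{Z\ne0\}\le|Z|$ together with $\{X-Z_-=k\}=\{M=k+1\}$, yields \eqref{rrrf}.

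For \eqref{eq:boundOnv}, with $h=\mathbf{1}_{\{0,\dots,v\}}$, the solution satisfies $g_{[0,v]}(j)=\bigl(\sum_{k=0}^v\tfrac{\lambda^k}{k!}\bigr)\int_0^1u^{j-1}e^{-\lambda u}\,\dint u$ for $j\ge v+1$ and a likewise nonnegative expression for $1\le j\le v$, so $g_{[0,v]}\ge0$ and is non-increasing on $\{v+1,v+2,\dots\}$. I would again start from the identity and split on $M$. On $\{M\le v+1\}=\{X-Z_-\le v\}$ use only the universal Stein-factor bound $\lambda\sup_k|g_{[0,v]}(k+1)-g_{[0,v]}(k)|\le1-e^{-\lambda}\le1$, contributing $\mathbb{E}[|Z|\mathbf{1}\{X-Z_-\le v\}]$. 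On $\{M\ge v+2\}$: if $\lambda\le(v+1)^2$ the same bound already yields a contribution at most $\mathbb{E}[|Z|]\le\tfrac{(v+1)^2}{\lambda}\mathbb{E}[|Z|]$; if $\lambda>(v+1)^2$ (so $\lambda>2v$), monotonicity of $g_{[0,v]}$ on $\{v+1,v+2,\dots\}$ gives $|g_{[0,v]}(X+1)-g_{[0,v]}(X+Z+1)|\le g_{[0,v]}(v+2)$, while the elementary estimates $\sum_{k=0}^v\tfrac{\lambda^k}{k!}\le\tfrac{2\lambda^v}{v!}$ and $\int_0^1u^{v+1}e^{-\lambda u}\,\dint u\le\tfrac{(v+1)!}{\lambda^{v+2}}$ give $\lambda g_{[0,v]}(v+2)\le\tfrac{2(v+1)}{\lambda}\le\tfrac{(v+1)^2}{\lambda}$; in either case $\{M\ge v+2\}$ contributes at most $\tfrac{(v+1)^2}{\lambda}\mathbb{E}[|Z|]$. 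Summing the two contributions gives \eqref{eq:boundOnv}.

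The identity from \eqref{Mek}, the telescoping/Lipschitz estimates, and the elementary inequalities for $g_{\{0\}}$, $g_{[0,v]}$ and the partial exponential sums are routine. The part I expect to need the most care is identifying the Stein solutions explicitly and exploiting their monotonicity — and, for \eqref{eq:boundOnv}, realising that the delicate bound $\lambda g_{[0,v]}(v+2)\le(v+1)^2/\lambda$ only has to be established in the regime $\lambda>(v+1)^2$, where the partial exponential sum is dominated by its last term, so that everywhere else the universal Stein factor already suffices.
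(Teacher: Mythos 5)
Your proof is correct and reaches the same bounds, but it takes a genuinely different route from the paper. The paper derives Theorem~\ref{Thmmm} as the special case $q_i\equiv 0$ of the generalized Theorem~\ref{Thmmm2}, whose proof first establishes the identity of Proposition~\ref{oid}, then separates it into a size-bias part $H_g$ and an error part $Q_g$, and finally inserts the increment bounds on $\Delta f_{\{0\}}$ and $\Delta f_{\{0,\dots,v\}}$ from Lemma~\ref{lemma}. You instead prove the theorem directly: after deducing $\mathbb{E}[Xf(X)]=\lambda\mathbb{E}[f(X+Z+1)]$ and $X+Z\geq0$ a.s., you write the Stein discrepancy as $\lambda\,\mathbb{E}[g_h(X+1)-g_h(X+Z+1)]$ and exploit the explicit formula $f_{\{0\}}(j)=\int_0^1 u^{j-1}e^{-\lambda u}\,du$. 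This turns the paper's increment estimates \eqref{eqn:BoundDelta} into \emph{value} estimates $f_{\{0\}}(j)\le\tfrac1j\wedge\tfrac{(j-1)!}{\lambda^j}$, and the paper's telescoping over increments into the monotonicity bound $|f_{\{0\}}(X+1)-f_{\{0\}}(X+Z+1)|\le f_{\{0\}}(M)\mathbf{1}\{Z\neq0\}$ with $M=X-Z_-+1$. (Your inequality with $\mathbf{1}\{Z\neq0\}$ is in fact slightly sharper than the $|Z|$-telescoping used in the paper, but after the crude replacement $\mathbf{1}\{Z\neq0\}\le|Z|$ both land on the same bound.) For \eqref{eq:boundOnv} the paper proves a sharp increment bound \eqref{eq: bndOndeltav} valid whenever $\lambda\ge v$ and handles $\lambda<v$ separately via \eqref{fsxx2}; your split at $\lambda>(v+1)^2$ achieves the same, with the regime $\lambda\le(v+1)^2$ absorbed by the universal Stein factor and the regime $\lambda>(v+1)^2$ handled via $\lambda f_{\{0,\dots,v\}}(v+2)\le 2(v+1)/\lambda\le(v+1)^2/\lambda$, using the partial-sum estimate $\sum_{k\le v}\lambda^k/k!\le 2\lambda^v/v!$ valid for $\lambda\ge 2v$. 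What the paper's detour through Theorem~\ref{Thmmm2} buys is the treatment of approximate size-bias couplings (nonzero $q_i$), which it needs for the Voronoi circumscribed-radius application; what your direct argument buys is a shorter, self-contained proof of Theorem~\ref{Thmmm} whose key inequalities on the Stein solution all fall out of one explicit integral representation rather than a separate lemma.
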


Recall that for a random variable $Y\geq 0$ with $\mu=\mathbb{E}[Y]>0$, a random variable $Y^s$ on the same probability space as $Y$ is a size-bias coupling of $Y$ if it satisfies 
\begin{equation} \label{eq:SizeBiasCoupling}
\mathbb{E}[Y f(Y)]=\mu\mathbb{E}[f(Y^s)]
\end{equation}
for all measurable $f$ such that $\mathbb{E}[|Y f(Y)|]<\infty$. Thus, \eqref{Mek} implies that $X+Z+1$ is a size-bias coupling of $X$ so that we can replace $Z$ by $X^s-X-1$ with a size-bias coupling $X^s$ of $X$ in Theorem \ref{Thmmm}. In this form the bound for the total variation distance in \eqref{fsxx} is a classical result (see \cite[Theorem 4.13]{MR2861132} and the discussion at the beginning of Section 5 in \cite{MR3896143} for further references), whose proof  is based on the Chen-Stein method and  size-bias coupling. 
For the Chen-Stein method for Poisson approximation we refer the reader to e.g.\ \cite{MR1163825,MR1632651,MR2861132}, while \cite{MR3896143} is a survey on size-bias coupling.
Note that the bound on the Wasserstein distance in \eqref{fsxx} can be derived by combining the proof of \cite[Theorem 4.13]{MR2861132} with \cite[Theorem 1.1]{MR2274850}.

\begin{remark}\label{remark1}
Let $X$ be as in Theorem \ref{Thmmm} and assume that \eqref{Mek} is satisfied.
\begin{itemize}
\item [(i)] The last expressions on the right-hand sides of \eqref{rrrf} and \eqref{eq:boundOnv} can be further bounded using the inequalities
\begin{align*}
&\mathbb{E}[\vert Z \vert \mathbf{1}\{X- Z_- = k\}] \leq \mathbb{E}[Z_-] + \mathbb{E}[Z_+ \mathbf{1}\{X=k\}], \quad\,\, k\in\N_0, \\
&\mathbb{E}\big[ \vert Z \vert \mathbf{1}\{ X- Z_{-} \leq  v\} \big] \leq \mathbb{E}[Z_-] + \mathbb{E}[Z_+ \mathbf{1}\{X\leq v\}], \quad v\in\N.
\end{align*}
\item [(ii)] 
From \eqref{eq:SizeBiasCoupling} with $f(x)=x$ we obtain $\lambda \mathbb{E}[X^s] =  \mathbb{E}[X^2]$ so that $Z=X^s-X-1$ yields
\begin{align}\label{eq: meanZ}
\mathbb{E}[Z] 
= \frac{1}{\lambda} \big\{ \mathrm{Var}( X) - \lambda\big\}.
\end{align}
\end{itemize}
\end{remark}

The next result constitutes our main achievement and generalizes Theorem \ref{Thmmm}. Instead of assuming that $Z$ satisfies \eqref{Mek} exactly, we allow error terms on the right-hand side of \eqref{Mek}.

\begin{theorem}\label{Thmmm2}
	Let $X$ be a random variable with values in $\mathbb{N}_0$ and let $P_{\lambda}$ be a Poisson random variable with mean $\lambda>0$. Let $Z$ be a random variable defined on the same probability space as $X$ with values in $\Z$, and let   $q_i,i\in\N_0,$ be the sequence given by 
	\begin{align}\label{Mek2}
	  q_{i-1}= i\mathbb{P}(X=i)-\lambda \mathbb{P}(X + Z = i-1), \quad i\in\N.
	\end{align}
	Then,
	\begin{align}\label{fsxx2} 
	& d_{TV}(X,P_\lambda) 
	\leq (1 \wedge \lambda ) \mathbb{E}[ \vert Z \vert ] + \left(1 \wedge \frac{1}{\sqrt{\lambda}}\right) \sum_{i=0}^{\infty} \vert q_i \vert 
	\end{align}
	and
	\begin{align}\label{wass-mainThm}
 d_{W}(X,P_\lambda) 
	\leq  \lambda \mathbb{E}[ \vert Z \vert ] + \sum_{i=0}^{\infty} \vert q_i \vert.
	\end{align}
  Moreover,  if $\mathbb{P}(X + Z \geq 0)=1$, then
	\begin{align}\label{wass-mainThm-2}
		d_{W}(X,P_\lambda) 
		\leq (1.1437 \sqrt{\lambda} \wedge \lambda )\mathbb{E}[ \vert Z \vert ] + \sum_{i=0}^{\infty} \vert q_i \vert,
	\end{align} 
 for all $m\in\N_0$,
	\begin{equation}\begin{split}\label{rrrf2} 
	\vert \mathbb{P}(X=0) - \mathbb{P}(P_\lambda =0) \vert 
	& \leq \frac{m!}{\lambda^{m}}\mathbb{E}\big[ \vert Z \vert \big] + \sum_{k=0}^{m-1} \left( \frac{\lambda}{k+1} \wedge \frac{k!}{\lambda^{k}} \right) \mathbb{E}\big[ \vert Z\vert \mathbf{1}\{X- Z_{ -} = k \} \big] 
	\\
	& \quad + \left(1 \wedge \frac{1}{\lambda}\right) \vert q_0 \vert + \left(1 \wedge \frac{1}{\lambda^2}\right) \sum_{i=1}^\infty \vert q_i \vert 
	\end{split}\end{equation}
	and for all $v\in\N$,
	\begin{equation}\begin{split}\label{bnd-gen-on-v} 
		\vert \mathbb{P}(X\leq v) - \mathbb{P}(P_\lambda \leq v) \vert 
	&\leq \frac{(v+1)^2}{\lambda}\mathbb{E}[ \vert Z \vert ] +  \mathbb{E}\big[ \vert Z \vert \mathbf{1}\{ X- Z_{-} \leq  v\} \big] 
	\\
	&\quad  + \left(1 \wedge \frac{1}{\sqrt{\lambda}}\right) \sum_{i=0}^{\infty} \vert q_i \vert .
	\end{split}\end{equation}
\end{theorem}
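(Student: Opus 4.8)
The plan is to run the Chen--Stein method for Poisson approximation together with the ``approximate size-bias'' relation \eqref{Mek2}. For a function $h:\N_0\to\R$ let $f_h$ denote the solution of the Stein equation $\lambda f(j+1)-jf(j)=h(j)-\BE[h(P_\lambda)]$, $j\in\N_0$, normalised by $f_h(0)=0$ and extended by $f_h(j)=0$ for negative $j$; one takes $h=\mathbf{1}_A$ (the indicator of $A\subset\N_0$) for $d_{TV}$, $h=g$ a $1$-Lipschitz function for $d_{W}$, $h=\mathbf{1}\{\cdot\leq v\}$ for \eqref{bnd-gen-on-v}, and $h=\mathbf{1}\{\cdot=0\}$ for \eqref{rrrf2}. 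Throughout, the classical Stein-factor estimates for Poisson approximation are used: $\sup_A\|f_{\mathbf{1}_A}\|_\infty\leq 1\wedge\lambda^{-1/2}$ and $\sup_A\sup_j|f_{\mathbf{1}_A}(j+1)-f_{\mathbf{1}_A}(j)|\leq 1\wedge\lambda^{-1}$; $\|f_g\|_\infty\leq 1$, $\sup_j|f_g(j+1)-f_g(j)|\leq 1$ and $\lambda\sup_{j\geq 1}|f_g(j+1)-f_g(j)|\leq 1.1437\sqrt{\lambda}\wedge\lambda$ for $1$-Lipschitz $g$ (the last being the estimate behind the Wasserstein bound in \eqref{fsxx}, obtained as in the references cited there); the explicit identity $f_{\mathbf{1}\{\cdot=0\}}(j)=\frac{(j-1)!}{\lambda^{j}}\BP(P_\lambda\geq j)$ for $j\geq 1$, whence $f_{\mathbf{1}\{\cdot=0\}}(1)\leq 1\wedge\lambda^{-1}$ and $f_{\mathbf{1}\{\cdot=0\}}(j)\leq 1\wedge\lambda^{-2}$ for $j\geq 2$; and the refined, localised bounds on $f_{\mathbf{1}\{\cdot\leq v\}}$ and $f_{\mathbf{1}\{\cdot=0\}}$ that already underlie Theorem \ref{Thmmm}.

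The key step is to combine the Chen--Stein identity $\BE[h(X)]-\BE[h(P_\lambda)]=\BE[\lambda f_h(X+1)-Xf_h(X)]$ with \eqref{Mek2}. Writing $\BE[Xf_h(X)]=\sum_{i\geq 1}i\BP(X=i)f_h(i)=\sum_{i\geq 1}(\lambda\BP(X+Z=i-1)+q_{i-1})f_h(i)$ and using that $f_h$ vanishes on the non-positive integers gives $\BE[Xf_h(X)]=\lambda\BE[f_h(X+Z+1)]+\sum_{i\geq 0}q_if_h(i+1)$, where the first expectation is unambiguous because $f_h(X+Z+1)=0$ on $\{X+Z<0\}$ (and if $\BP(X+Z\geq 0)=1$ no extension is needed, all arguments then lying in $\N$). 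Hence
\[
\BE[h(X)]-\BE[h(P_\lambda)]=\lambda\BE\big[f_h(X+1)-f_h(X+Z+1)\big]-\sum_{i=0}^{\infty}q_if_h(i+1),
\]
and it remains to estimate the two terms and then take the supremum over the appropriate class of $h$.

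For the first term, $f_h(X+1)-f_h(X+Z+1)$ is a telescoping sum of at most $|Z|$ increments of $f_h$; feeding in the Stein-factor estimates shows that $|\lambda\BE[f_h(X+1)-f_h(X+Z+1)]|$ is at most the right-hand side of the corresponding bound in Theorem \ref{Thmmm}. This part of the argument is precisely the proof of Theorem \ref{Thmmm} and uses only the telescoping identity, the zero-extension and bounds on the increments of $f_h$ and on $f_h$ near $0$, never the equality $\lambda=\BE[X]$. It contributes $(1\wedge\lambda)\BE[|Z|]$ in \eqref{fsxx2}; $\lambda\BE[|Z|]$ in \eqref{wass-mainThm} and, when $\BP(X+Z\geq 0)=1$, the sharper $(1.1437\sqrt{\lambda}\wedge\lambda)\BE[|Z|]$ in \eqref{wass-mainThm-2} --- the $\sqrt{\lambda}$-improvement survives precisely under this hypothesis, since only then does the telescoping path stay in $\N$, where $|f_g(j+1)-f_g(j)|$ enjoys the $\lambda^{-1/2}$ refinement, whereas across $j=0$ the increment $|f_g(1)-f_g(0)|=|f_g(1)|$ is merely of order $1$; and the refined expressions of \eqref{rrrf2} (for every $m\in\N_0$) and of \eqref{bnd-gen-on-v} when $\BP(X+Z\geq 0)=1$. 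For the second term one bounds $|\sum_{i\geq 0}q_if_h(i+1)|\leq(\sup_{j\geq 1}|f_h(j)|)\sum_{i\geq 0}|q_i|$ and inserts the Stein estimates above: this yields the factor $1\wedge\lambda^{-1/2}$ multiplying $\sum_i|q_i|$ in \eqref{fsxx2} and \eqref{bnd-gen-on-v}, the factor $1$ in \eqref{wass-mainThm} and \eqref{wass-mainThm-2}, and --- estimating $\sum_iq_if_{\mathbf{1}\{\cdot=0\}}(i+1)$ term by term via the explicit formula --- the factor $1\wedge\lambda^{-1}$ on $|q_0|$ and $1\wedge\lambda^{-2}$ on $\sum_{i\geq 1}|q_i|$ in \eqref{rrrf2}. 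Adding the two contributions and taking the supremum over $A\subset\N_0$, respectively over $1$-Lipschitz $g$, gives the five assertions; for $q\equiv 0$ one recovers Theorem \ref{Thmmm}.

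The main difficulty is not this bookkeeping but the Stein-solution input it rests on: the refined, localised bounds on $f_{\mathbf{1}\{\cdot\leq v\}}$ and $f_{\mathbf{1}\{\cdot=0\}}$ that produce the sharp first-term contributions (which require a careful analysis of the explicit solution formulas together with their monotonicity and sign properties, and are already the technical core of Theorem \ref{Thmmm}), together with the correct treatment of the event $\{X+Z<0\}$ through the zero-extension --- precisely what separates the general bounds \eqref{fsxx2} and \eqref{wass-mainThm} from \eqref{wass-mainThm-2}, \eqref{rrrf2} and \eqref{bnd-gen-on-v}, and decides whether the $\sqrt{\lambda}$-improvement in the Wasserstein estimate is available. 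Once these ingredients are in place, Theorem \ref{Thmmm2} follows by a short combination of the displayed decomposition with the triangle inequality.
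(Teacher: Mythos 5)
Your proposal is correct and follows essentially the same route as the paper: the Chen--Stein identity is combined with the approximate size-bias relation \eqref{Mek2} to split the error into a $Z$-term (handled by telescoping increments of the Stein solution, with the zero-extension tracking the event $\{X+Z<0\}$ and the constraint $\mathbb{P}(X+Z\geq 0)=1$ keeping all increments in $\N$ for the refined bounds) and a $q$-term (bounded by $\sup_{j\geq 1}|f_h(j)|\sum_i|q_i|$), with the appropriate Stein factors inserted in each case. The paper organizes the same decomposition through Proposition \ref{oid} and an explicit reindexing of $H_g$ rather than the telescoping language, but the content is identical.
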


Note that Theorem \ref{Thmmm} is a special case of Theorem \ref{Thmmm2}. Indeed, if $q_i=0$ for all $i\in\N_0$, \eqref{Mek2} becomes \eqref{Mek} and the bounds in Theorem \ref{Thmmm2} simplify to those in Theorem \ref{Thmmm}. In this situation $X+Z+1$ is a size-bias coupling of $X$. Thus, we can think of $X+Z+1$ with $Z$ satisfying \eqref{Mek2} as a generalization of size-bias coupling. In order to have good bounds in Theorem \ref{Thmmm2}, the error terms $q_i$, $i\in\N_0$, should be small. The important advantage of Theorem \ref{Thmmm2} compared to Theorem \ref{Thmmm} is that one only needs to construct an \emph{approximate} size-bias coupling instead of an exact size-bias coupling.

For our paper the so-called magic factors or Stein factors play a crucial role. These are bounds on the solutions of the Stein equation, which lead to the factors involving $\lambda$ in our results. Since different classes of test functions have different magic factors, the upper bounds for the differences between $\mathbb P(X\leq v)$ and $\mathbb P(P_\lambda \leq v)$ for $v\in\N_0$ in Theorems \ref{Thmmm} and  \ref{Thmmm2} are of a better order in $\lambda$ than those for the total variation distance or the Wasserstein distance. This observation is essential for obtaining approximation results in the Kolmogorov distance as \eqref{eq:InterpointDistances} since it allows to bound the right-hand sides of \eqref{rrrf}, \eqref{eq:boundOnv}, \eqref{rrrf2} and \eqref{bnd-gen-on-v} uniformly in $\lambda$. For a different Poisson approximation result where one has a better order in $\lambda$ for the difference of the probabilities at zero than for the total variation distance we refer the reader to \cite[Theorem 1]{MR972770}.

To demonstrate the versatility of our general main results we apply them to several examples. In particular, we deduce bounds as \eqref{eq:InterpointDistances}, where we compare minima or maxima of collections of dependent random variables with random variables having an exponential, Weibull or Gumbel distribution.   

We study the Poisson approximation of $U$-statistics constructed from an underlying Poisson or binomial point process (see Subsections \ref{U-stt-B-prf} and \ref{U-stt-P-prf}). 
As application of our main finding on $U$-statistics  with Poisson input, Theorem \ref{U-stat-fin}, we consider the minimum inter-point distance problem discussed at the beginning of the introduction and establish the bound \eqref{eq:InterpointDistances} for the exponential approximation in Kolmogorov distance. 

Our next example is the Poisson approximation of the number of non-overlapping $k$-runs in a sequence of $n$ i.i.d.\ Bernoulli random variables. By a $k$-run one means at least $k$ successes in a row. Here, we use Theorem \ref{Thmmm} to bound the difference between the probability that among $n$ trials there are no more than $v$ non-overlapping $k$-runs and $\mathbb P (P_\alpha\leq v)$ for a certain Poisson random variable $P_\alpha$; this bound is remarkable because it does not depend on $k$, i.e., the number of required successes in a row.

For stationary Poisson-Voronoi tessellations we consider statistics related to circumscribed radii and inradii. We use the inequality \eqref{rrrf2} in Theorem \ref{Thmmm2} to compare a transform of the minimal circumscribed radius of the cells with the nucleus in an observation window with a Weibull random variable in Kolmogorov distance. For this example we use the full generality of Theorem \ref{Thmmm2} since we construct a coupling that satisfies \eqref{Mek2}, but is not a size-bias coupling. By applying the inequality \eqref{rrrf} in  Theorem \ref{Thmmm} we approximate a transform of the maximal inradius of the cells with the nucleus in an observation window by a Gumbel random variable in the Kolmogorov distance.

A crucial contribution of this paper to stochastic geometry is that we provide bounds with respect to the Kolmogorov distance for the distributional approximation of some minima and maxima. The limiting distributions of the minimal distance between the points of a Poisson process and of large inradii and small circumscribed radii of Poisson-Voronoi tessellations have been studied before in e.g.\ \cite{MR3252817,CHENAVIER20142917,MR2971726,MR3585403}. Some of these works provide quantitative bounds for the difference of the distribution functions at a fixed $u\in\R$, which depend on $u$. Thanks to our general Poisson approximation results Theorem \ref{Thmmm} and Theorem \ref{Thmmm2}, we are able to derive uniform bounds for all $u\in\R$. An alternative approach to deducing such results via Poisson approximation is to apply directly Stein's method for the exponential, Weibull or Gumbel distribution; see e.g.\ \cite{MR2861132} for more details on Stein's method for exponential approximation. 

In \cite{MR3780386}, a general result for the Poisson approximation of statistics of Poisson processes is derived by combining the Chen-Stein method and a kind of size-bias coupling and applied to study some statistics of inhomogeneous random graphs such as isolated vertices. Requiring some (stochastic) ordering assumptions between a random variable and its size-bias coupling leads to Poisson approximation results. In a similar spirit to our work,   these ordering conditions were relaxed in \cite{MR3877881}. For some recent Poisson process convergence results related to stochastic geometry we refer the reader to \cite{otto2020poisson,pianoforte2021criteria}.

Other noteworthy general results derived in this paper are  lower and upper bounds on the probability that $X$ equals $0$, which are given in Proposition \ref{prop-bound-0} and Corollary \ref{prpbound1}. Informally,  they say that $\mathbb{P}(X=0)$ can be bounded from above or below by $e^{-\lambda}$ for some $\lambda>0$ if the random variable $Z$ and the sequence $q_i, i\in\N_0,$ in Theorem  \ref{Thmmm} and Theorem \ref{Thmmm2}  satisfy certain conditions on their signs;   for  $Z$ as in Theorem \ref{Thmmm},  it is understood that $q_i= 0$ for all $i\in\N_0$. These results sometimes allow us to remove the absolute values from the left-hand sides of \eqref{rrrf} and \eqref{rrrf2}.

The proof of Theorem \ref{Thmmm2} is based on the Chen-Stein method and the coupling in \eqref{Mek2}. Using the solution of the Stein equation for the Poisson distribution, we derive in Proposition \ref{oid} a new expression for
the difference $	\vert \mathbb{E}[g(P_\lambda)] - \mathbb{E}[g(X)] \vert$
for any $g\in\text{Lip}(1)$. Taking in Proposition \ref{oid}, the supremum over all functions in $\mathrm{Lip}(1)$ (or all indicator functions) establishes a different way to represent the Wasserstein distance (or the total variation distance). Moreover, choosing $g=\mathbf 1\{\cdot \leq v\}$ with $v\in\N$ gives a new expression for $\vert\mathbb{P}(X\leq v)-\mathbb{P}(P_{\lambda}\leq v)\vert$. These  identities are then manipulated and combined with the magic factors and the coupling in \eqref{Mek2} to prove Theorem \ref{Thmmm2}.

Before we present our applications in Section \ref{Sec: applic}, we prove our main results in the next
section.

\section{Proof of the main results}
This section provides the proofs of Theorem \ref{Thmmm} and Theorem \ref{Thmmm2}. To this end, we first study  the Stein equation for Poisson random variables. For any fixed $g\in \mathrm{Lip}(1)$, the solution of the Stein equation  is a function $f_g:\N_0\rightarrow \R$ with $f_g(0)=0$ that satisfies 
\begin{align}\label{steinEq-W}
	\lambda f_g(i+1) - i f_g(i) 
	= g(i) - \mathbb{E}[g(P_\lambda)], \quad i\in\mathbb{N}_0.
\end{align}
The function $f_g$ can be obtained by solving \eqref{steinEq-W} recursively for $i=0,1,\dots$. An explicit expression for  this solution is given in  \cite[Lemma 1]{MR974580}. In particular, for $g=\mathbf{1}_A$ with $A\subset\N_0$, one has the following representation for $f_g$ (see e.g.\ \cite[Lemma 4.2]{MR2861132}). 
\begin{lemma}\label{LemmaSteinEq}
For any  $\lambda>0$ and $A\subset\mathbb{N}_0$ the unique solution $f_A$ of
\begin{align}\label{steinEq}
\lambda f_A(i+1) - i f_A(i) 
= \mathbf{1}\{i\in A\} - \mathbb{P}(P_\lambda\in A), \quad i\in\mathbb{N}_0,
\end{align}
with  $f_A(0)=0$ is given by 
$$
f_A(i) 
=  \frac{e^{\lambda}(i-1)!}{\lambda^{i}} \big[\mathbb{P}(P_\lambda \in A\cap \{0,1,\dots, i-1 \} ) - \mathbb{P}(P_\lambda\in A) \mathbb{P}(P_\lambda \leq i-1)\big], 
\quad i\in\N.
$$
\end{lemma}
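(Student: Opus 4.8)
The plan is to verify the claimed explicit formula for $f_A$ by a direct computation, essentially solving the recursion \eqref{steinEq} and exhibiting the closed form; uniqueness is immediate from the recursive structure. First I would observe that \eqref{steinEq} is a first-order linear recursion: given $f_A(i)$, the value $f_A(i+1)$ is determined by
$$
f_A(i+1) = \frac{i f_A(i) + \mathbf{1}\{i\in A\} - \mathbb{P}(P_\lambda\in A)}{\lambda},
$$
so with the initial condition $f_A(0)=0$ the solution exists and is unique. Hence it suffices to check that the proposed formula satisfies both the recursion and $f_A(0)=0$.

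For the verification I would introduce the integrating-factor normalization $h(i) = \lambda^i f_A(i) / ((i-1)!\, e^\lambda)$ for $i\in\N$, which turns \eqref{steinEq} into a telescoping difference: multiplying \eqref{steinEq} by $\lambda^i/(i!\,e^\lambda)$ gives $h(i+1) - h(i) = \frac{\lambda^i}{i!} e^{-\lambda} \big(\mathbf{1}\{i\in A\} - \mathbb{P}(P_\lambda\in A)\big) = \mathbb{P}(P_\lambda = i)\mathbf{1}\{i\in A\} - \mathbb{P}(P_\lambda\in A)\,\mathbb{P}(P_\lambda = i)$. Summing this identity from $i=0$ to $i=j-1$, using that the $i=0$ contribution together with $f_A(0)=0$ forces $h(1) = \mathbf{1}\{0\in A\} - \mathbb{P}(P_\lambda\in A) = \mathbb{P}(P_\lambda \in A\cap\{0\}) - \mathbb{P}(P_\lambda\in A)\mathbb{P}(P_\lambda\leq 0)$, yields
$$
h(j) = \mathbb{P}(P_\lambda \in A\cap\{0,1,\dots,j-1\}) - \mathbb{P}(P_\lambda\in A)\,\mathbb{P}(P_\lambda\leq j-1),
$$
which is exactly the bracketed term in the claimed formula. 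Unwinding the definition of $h$ gives $f_A(j) = \frac{e^\lambda (j-1)!}{\lambda^j} h(j)$, as asserted.

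There is no real obstacle here; the only point requiring a little care is the bookkeeping at the boundary $i=0$, where the factor $(i-1)!$ in the formula is not literally applied (the formula is stated for $i\in\N$) and one must instead use \eqref{steinEq} at $i=0$ directly to pin down $f_A(1)$ and confirm consistency with the $j=1$ case of the closed form. Alternatively, one may simply cite \cite[Lemma 1]{MR974580} or \cite[Lemma 4.2]{MR2861132} for the statement, since it is standard; I would include the short telescoping argument above for completeness.
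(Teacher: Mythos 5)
The paper does not give a proof of this lemma; it cites \cite[Lemma 1]{MR974580} and \cite[Lemma 4.2]{MR2861132} and treats the explicit formula as standard. Your proof supplies the omitted argument by the classical integrating-factor / telescoping method, and the strategy is correct: multiplying \eqref{steinEq} by $\lambda^i/(i!\,e^\lambda)$ does turn the left-hand side into $h(i+1)-h(i)$ with $h(i)=\lambda^i f_A(i)/((i-1)!\,e^\lambda)$, and summing the right-hand side produces exactly the bracketed quantity. Uniqueness is immediate from the recursion with $f_A(0)=0$, as you say.

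There is, however, one arithmetic slip you should fix. You write
$$
h(1) = \mathbf{1}\{0\in A\} - \mathbb{P}(P_\lambda\in A) = \mathbb{P}(P_\lambda \in A\cap\{0\}) - \mathbb{P}(P_\lambda\in A)\,\mathbb{P}(P_\lambda\leq 0).
$$
The first equality is off by a factor $e^{-\lambda}$: from \eqref{steinEq} at $i=0$ one gets $f_A(1)=\big(\mathbf{1}\{0\in A\}-\mathbb{P}(P_\lambda\in A)\big)/\lambda$, hence
$$
h(1)=\frac{\lambda f_A(1)}{0!\,e^\lambda}=e^{-\lambda}\big(\mathbf{1}\{0\in A\}-\mathbb{P}(P_\lambda\in A)\big),
$$
which is what actually equals $\mathbb{P}(P_\lambda \in A\cap\{0\}) - \mathbb{P}(P_\lambda\in A)\,\mathbb{P}(P_\lambda\leq 0)$. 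With that one correction the displayed chain is consistent and the rest of your telescoping (which only uses the correct right-hand value of $h(1)$) goes through and reproduces the stated closed form. Also, since $h(0)$ would involve $(-1)!$, it is cleaner to say you sum the telescoped identity over $i=1,\dots,j-1$ and then add the separately computed $h(1)$, rather than ``from $i=0$''; you already flag this boundary issue, so it is only a matter of phrasing.
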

From now on, we denote by $f_A$ the solution of the Stein equation  \eqref{steinEq-W} for $g=\mathbf{1}_A$ with $A\subset \N_0$.
Let $X$ be a random variable with values in $\N_0$. The idea of the Chen-Stein method for the Poisson approximation  of $X$ is to plug $X$ in \eqref{steinEq-W} and to take the expectation, which yields
$$
\mathbb{E}[\lambda f_g(X+1) - X f_g(X) ] = \mathbb{E}[g(X)] - \mathbb{E}[g(P_\lambda )].
$$
So we can control the difference between the  expectations of $g(X)$ and $g(P_{\lambda})$ on the right-hand side by estimating the term on the left-hand side. This requires some bounds on the solution of \eqref{steinEq-W}, which we give in the sequel.  For a function $h:\N_0\to\R$ we define $\Delta h: \N_0\to\R$ by $\Delta h(i)=h(i+1)-h(i)$. The solution of the Stein equation \eqref{steinEq-W} and its differences can be bounded by the following terms, which are called magic factors or Stein factors (see \cite[Theorem 1.1]{MR2274850}).

\begin{lemma}\label{thm: bndfg} 
	Let $f_g$ be the solution of \eqref{steinEq-W}. Then,
	$$
	\max_{i\in\mathbb{N}_0} \vert f_g(i) \vert
	\leq 1  
	\quad
	\text{and}
	\quad
	\max_{i\in\mathbb{N}} |\Delta f_g(i)| 
	\leq  1\wedge \frac{8}{3\sqrt{2e\lambda}} \leq 1\wedge \frac{1.1437}{\sqrt{\lambda}} .
	$$
\end{lemma}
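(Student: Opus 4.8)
The plan is to reduce the two bounds to the classical Stein factors for indicator test functions and then exploit the linearity of the Stein equation together with a telescoping identity. Since the right-hand side of \eqref{steinEq-W} involves $g$ only through $g(i)-\mathbb{E}[g(P_\lambda)]$, the solution $f_g$ does not change when a constant is added to $g$, so we may assume $g(0)=0$. Writing $a_k=g(k+1)-g(k)$, we have $|a_k|\le 1$ because $g\in\mathrm{Lip}(1)$, and $g=\sum_{k\ge 0}a_k\mathbf 1_{A_k}$ pointwise on $\N_0$, where $A_k=\{k+1,k+2,\dots\}$. Since $\sum_{k\ge 0}|a_k|\,\mathbb{P}(P_\lambda\in A_k)\le\mathbb{E}[P_\lambda]=\lambda<\infty$, the function $\sum_{k\ge 0}a_kf_{A_k}$ is well defined, solves the Stein equation for $g$ and vanishes at $0$, so $f_g=\sum_{k\ge 0}a_kf_{A_k}$ by uniqueness. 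Consequently $\max_{i}|f_g(i)|\le\sum_{k\ge 0}|f_{A_k}(i)|$ and $\max_{i\in\N}|\Delta f_g(i)|\le\sum_{k\ge 0}|\Delta f_{A_k}(i)|$ for every $i$.

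Next I would analyze the half-line solutions. Substituting $A=A_k$ into the formula of Lemma \ref{LemmaSteinEq} and simplifying the Poisson probabilities gives, for all $i\in\N$ and $k\in\N_0$, $f_{A_k}(i)=-\frac{e^{\lambda}(i-1)!}{\lambda^{i}}\,\mathbb{P}\big(P_\lambda\le k\wedge(i-1)\big)\,\mathbb{P}\big(P_\lambda\ge i\vee(k+1)\big)\le 0$. On the other hand $\sum_{k\ge 0}\mathbf 1_{A_k}=\mathrm{id}$ on $\N_0$, so $\sum_{k\ge 0}f_{A_k}=f_{\mathrm{id}}$, and solving \eqref{steinEq-W} directly for $g=\mathrm{id}$ (for which $\mathbb{E}[g(P_\lambda)]=\lambda$) yields $f_{\mathrm{id}}(i)=-\mathbf 1\{i\ge 1\}$. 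Combining this with the nonpositivity of the $f_{A_k}(i)$ gives $\sum_{k\ge 0}|f_{A_k}(i)|=-\sum_{k\ge 0}f_{A_k}(i)=\mathbf 1\{i\ge 1\}\le 1$, which is the first claimed bound; it also gives $\sum_{k\ge 0}\Delta f_{A_k}(i)=\Delta f_{\mathrm{id}}(i)=0$ for every $i\in\N$, so the positive and negative parts of $k\mapsto\Delta f_{A_k}(i)$ cancel and $\sum_{k\ge 0}|\Delta f_{A_k}(i)|=2\sum_{k\ge 0}\big(\Delta f_{A_k}(i)\big)_{+}$.

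It then remains to show $\sup_{i\in\N}\sum_{k\ge 0}\big(\Delta f_{A_k}(i)\big)_{+}\le\frac12\big(1\wedge\frac{8}{3\sqrt{2e\lambda}}\big)$, which is the Wasserstein Stein factor estimate of Barbour and Xia \cite[Theorem 1.1]{MR2274850}. Using the closed form for $f_{A_k}$ above, a short computation shows that for fixed $i$ the quantity $\Delta f_{A_k}(i)$ equals a factor independent of $k$ times $\mathbb{P}(P_\lambda\le k)$ on the range $k\le i-2$ and a factor independent of $k$ times $\mathbb{P}(P_\lambda\ge k+1)$ on the range $k\ge i$, plus a single boundary term at $k=i-1$; hence $\Delta f_{A_k}(i)$ keeps a constant sign on each of these ranges and the positive part picks out essentially one of them. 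Summing over that range leaves the product of the quantity $\frac{e^{\lambda}(i-1)!}{\lambda^{i}}$, Poisson tail probabilities near $i$, and a partial sum of Poisson cumulative probabilities, which is elementary to bound by $1$; the sharper bound $\frac{8}{3\sqrt{2e\lambda}}$ requires a Gaussian-type local estimate of order $\lambda^{-1/2}$ for $\mathbb{P}(P_\lambda=i)$ together with the corresponding control of the Poisson tail ratios around the mean, and this is exactly where the constant $8/(3\sqrt{2e})$ is produced. The final inequality $\frac{8}{3\sqrt{2e\lambda}}\le\frac{1.1437}{\sqrt\lambda}$ is just the numerical fact $8/(3\sqrt{2e})=1.1436\ldots$. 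I expect the extraction of this sharp constant in the last step to be the main obstacle; everything before it is soft, resting only on linearity, telescoping and sign bookkeeping.
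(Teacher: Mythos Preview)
The paper does not prove this lemma at all; it simply cites \cite[Theorem~1.1]{MR2274850} (Barbour--Xia) for both bounds. So there is no in-paper argument to compare against, and your proposal goes well beyond what the paper does.

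Your treatment of the first bound is complete and correct: the half-line decomposition $g=\sum_{k\ge 0}a_k\mathbf{1}_{A_k}$, the sign observation $f_{A_k}(i)\le 0$, and the telescoping identity $\sum_k f_{A_k}=f_{\mathrm{id}}\equiv -\mathbf{1}\{\cdot\ge 1\}$ together give $|f_g(i)|\le\sum_k|f_{A_k}(i)|=1$ cleanly. This is a genuinely self-contained proof where the paper offers none.

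For the second bound, however, your proposal is a reduction rather than a proof. The structural claims---that $\Delta f_{A_k}(i)$ has constant sign on each of the two ranges $k\le i-2$ and $k\ge i$, and that the positive and negative parts cancel---are correct, but the sentence ``which is elementary to bound by $1$'' glosses over a computation that is not entirely trivial (note that the naive triangle inequality from $\sum_k|f_{A_k}(i)|=\sum_k|f_{A_k}(i+1)|=1$ only yields $2$, not $1$), and you explicitly defer the sharp constant $8/(3\sqrt{2e})$ to the Barbour--Xia analysis. Since that constant \emph{is} the content of \cite[Theorem~1.1]{MR2274850}, your argument for the second inequality ultimately rests on the same citation the paper invokes. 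In effect you have re-derived the framework Barbour and Xia themselves use and then pointed to where their hard estimate enters, which is informative but not an independent proof.
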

Since $f_g(0)=0$, Lemma \ref{thm: bndfg} implies that 
\begin{align}\label{eq:  fNo}
	\max_{i\in\mathbb{N}_0} \vert f_g(i) \vert
	\leq 1   \quad \text{ and } \quad \max_{i\in\mathbb{N}_0} |\Delta f_g(i)|  \leq 1.
\end{align}
Moreover, the solution of \eqref{steinEq} for $A\subset\N_0$ has the following magic factors (see e.g.\  \cite[Lemma 4.4]{MR2861132}).
\begin{lemma}\label{lem:BoundfA}
For $f_A$ as in Lemma \ref{LemmaSteinEq},
$$
\max_{i\in\mathbb{N}_0} \vert f_{A}(i) \vert
\leq 1 \wedge \frac{1}{\sqrt{\lambda}}
\quad
\text{and}
\quad
\max_{i\in\mathbb{N}_0} |\Delta f_A(i)| 
\leq 1\wedge \frac{1}{\lambda}.
$$
\end{lemma}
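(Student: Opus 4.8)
The plan is to use the closed form of $f_A$ from Lemma \ref{LemmaSteinEq} together with linearity of the Stein equation \eqref{steinEq} to reduce the supremum over all $A\subseteq\mathbb{N}_0$ to a single extremal expression involving only Poisson probabilities, and then to estimate that expression.

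First I would treat $\max_{i}\vert f_A(i)\vert$. Since \eqref{steinEq} is linear in its right-hand side and the solution with $f(0)=0$ is unique, $f_A=\sum_{j\in A}f_{\{j\}}$ pointwise, and in particular $f_{\mathbb{N}_0}\equiv 0$. The bracket appearing in Lemma \ref{LemmaSteinEq} is exactly the covariance $\mathbb{P}(P_\lambda\in A,\,P_\lambda\le i-1)-\mathbb{P}(P_\lambda\in A)\mathbb{P}(P_\lambda\le i-1)$, which as a function of $A$ is additive over the atoms of $\mathbb{N}_0$ and is therefore extremized by $A=\{0,\dots,i-1\}$ and by its complement, with extremal absolute value $\mathbb{P}(P_\lambda\le i-1)\mathbb{P}(P_\lambda\ge i)$. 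Hence, using $f_A(0)=0$,
\[
\max_{A\subseteq\mathbb{N}_0}\,\max_{i\in\mathbb{N}_0}\vert f_A(i)\vert\ \le\ \sup_{i\ge 1}\ \frac{e^{\lambda}(i-1)!}{\lambda^{i}}\,\mathbb{P}(P_\lambda\le i-1)\,\mathbb{P}(P_\lambda\ge i),
\]
so the sup-norm bound is reduced to estimating this explicit quantity.

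Next I would handle $\max_i\vert\Delta f_A(i)\vert$ analogously. Writing $\Delta f_A=\sum_{j\in A}\Delta f_{\{j\}}$, a short computation from Lemma \ref{LemmaSteinEq} (or from the recursion $\lambda f_A(i+1)=i f_A(i)+\mathbf{1}\{i\in A\}-\mathbb{P}(P_\lambda\in A)$) shows that for fixed $i$ the sign of $\Delta f_{\{j\}}(i)$ depends on $j$ only through whether $j<i$, $j=i$ or $j>i$. Consequently the maximizer of $\Delta f_A(i)$ over $A$ lies among $\{i\}$, $\{0,\dots,i\}$, $\{i,i+1,\dots\}$ and $\mathbb{N}_0$, and using $\Delta f_{\mathbb{N}_0}\equiv 0$ one reduces $\sup_A\max_i\vert\Delta f_A(i)\vert$ to the supremum over $i$ of an explicit expression built from $\mathbb{P}(P_\lambda\le i)$, $\mathbb{P}(P_\lambda\ge i+1)$ and the point masses $\mathbb{P}(P_\lambda=i)$.

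It then remains to bound these explicit Poisson expressions by $1\wedge\lambda^{-1/2}$ and $1\wedge\lambda^{-1}$, respectively. The bound by $1$ in each case is elementary: for $f_A$ one uses $\mathbb{P}(P_\lambda\le i-1)\mathbb{P}(P_\lambda\ge i)\le\min\{\mathbb{P}(P_\lambda\le i-1),\mathbb{P}(P_\lambda\ge i)\}$ together with the crude identity $\frac{e^{\lambda}(i-1)!}{\lambda^{i}}\mathbb{P}(P_\lambda\ge i)=\frac{1}{i}\sum_{m\ge 0}\frac{\lambda^{m}}{(i+1)\cdots(i+m)}$, treating $i\le\lambda$ and $i>\lambda$ separately. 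The bounds by $\lambda^{-1/2}$ and $\lambda^{-1}$ are the real work and I expect this to be the main obstacle: they require sharp control of Poisson probabilities near the mean, for instance via the identity $\mathbb{E}[(P_\lambda-\lambda)_+]=\lambda\,\mathbb{P}(P_\lambda=\lfloor\lambda\rfloor)$ (for non-integer $\lambda$) combined with a bound of order $\lambda^{-1/2}$ on the modal probability, or via a unimodality argument for $i\mapsto\frac{e^{\lambda}(i-1)!}{\lambda^{i}}\mathbb{P}(P_\lambda\le i-1)\mathbb{P}(P_\lambda\ge i)$ and Stirling's formula, and similarly for the differences. Since these inequalities are classical (going back to Barbour--Holst--Janson), one may instead simply invoke them; the statement is precisely \cite[Lemma 4.4]{MR2861132}, whose proof carries out this computation in full, and we do so here.
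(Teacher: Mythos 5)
The paper does not prove this lemma; it simply cites \cite[Lemma 4.4]{MR2861132}, which is exactly the reference you invoke at the end, so your resolution coincides with the paper's. Your sketch of a self-contained argument via linearity ($f_A=\sum_{j\in A}f_{\{j\}}$, with $f_{\mathbb{N}_0}\equiv 0$) is the standard route and is sound, with the minor caveat that for the gradient the classical sign pattern is $\Delta f_{\{j\}}(i)\ge 0$ precisely when $j=i$ and $\Delta f_{\{j\}}(i)\le 0$ otherwise, so the extremizing sets are $\{i\}$ and $\mathbb{N}_0\setminus\{i\}$ rather than the larger list of candidates you name.
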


We now derive similar - potentially sharper - magic factors for the special cases  $A=\{0,\dots , v\}, v\in\N_0$. Similar bounds for sets $A$ that are singletons were deduced  for the translated Poisson approximation  in \cite[Lemma 3.7]{MR2358635}.

\begin{lemma}\label{lemma}
Let $f_{\{0\}}$ be the unique solution of \eqref{steinEq} for $A=\{0\}$. Then,  
\begin{align}\label{boundf_0}
\vert  f_{\{0\}}(i) \vert 
\leq 
\begin{cases}
  1 \wedge \frac{1}{\lambda}, & \quad \mathrm{if} \,\, i=1 , \\
	1 \wedge \frac{1}{\lambda^2}, & \quad \mathrm{if} \,\, i\geq 2 ,
\end{cases}
\end{align}
and for all $i\in\mathbb{N}$,
\begin{align}\label{Deltaf0Negative}
\Delta f_{\{0\}} (i) 
\leq 0 .
\end{align} 
Furthermore for all $i,n\in\mathbb{N}$ with $i\geq n$,
\begin{align}\label{eqn:BoundDelta}
\vert \Delta f_{\{0\}}(i) \vert 
\leq   \frac{1}{n} \wedge \frac{(n-1)!}{\lambda^n} .
\end{align}
Let $f_{\{0,\dots,v\}}$  be the unique solution of \eqref{steinEq} for $A=\{0,\dots,v\}$ with $v\in\N$ and $v\leq \lambda$. Then for all $i\geq v+2$,
\begin{align}\label{eq: bndOndeltav}
\Delta f_{\{0,\dots, v\}} (i) \leq 1 \wedge \frac{(v + 1)^2}{\lambda^2}.
\end{align}
\end{lemma}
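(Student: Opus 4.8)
The plan is to read everything off the explicit formula of Lemma~\ref{LemmaSteinEq}. For $A=\{0\}$ and $i\in\N$ one has $\{0\}\cap\{0,\dots,i-1\}=\{0\}$, so the formula collapses to $f_{\{0\}}(i)=\frac{(i-1)!}{\lambda^i}\bigl(1-\mathbb{P}(P_\lambda\le i-1)\bigr)=\frac{(i-1)!}{\lambda^i}\mathbb{P}(P_\lambda\ge i)>0$, and, similarly, for $A=\{0,\dots,v\}$ and $i\ge v+1$ one gets $\{0,\dots,v\}\cap\{0,\dots,i-1\}=\{0,\dots,v\}$, hence the factorization $f_{\{0,\dots,v\}}(i)=e^{\lambda}\,\mathbb{P}(P_\lambda\le v)\,f_{\{0\}}(i)$. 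The first substantive step is then the monotonicity \eqref{Deltaf0Negative}: from the formula above, $\Delta f_{\{0\}}(i)=\frac{(i-1)!}{\lambda^{i+1}}\bigl(i\,\mathbb{P}(P_\lambda\ge i+1)-\lambda\,\mathbb{P}(P_\lambda\ge i)\bigr)$, and re-indexing the two Poisson tail sums gives $\lambda\,\mathbb{P}(P_\lambda\ge i)-i\,\mathbb{P}(P_\lambda\ge i+1)=\sum_{k\ge i+1}e^{-\lambda}\frac{(k-i)\lambda^k}{k!}\ge 0$, so $\Delta f_{\{0\}}(i)\le 0$; in particular $f_{\{0\}}$ is positive and non-increasing on $\N$.

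With monotonicity in hand, the pointwise bound \eqref{boundf_0} reduces to the two base cases: $f_{\{0\}}(1)=(1-e^{-\lambda})/\lambda$ and $f_{\{0\}}(2)=(1-(1+\lambda)e^{-\lambda})/\lambda^2$, and the claimed $1\wedge\lambda^{-1}$ and $1\wedge\lambda^{-2}$ follow from $1-e^{-\lambda}\le\min\{\lambda,1\}$ and $1-(1+\lambda)e^{-\lambda}\le\min\{\lambda^2,1\}$, the latter because $\lambda\mapsto\lambda^2-1+(1+\lambda)e^{-\lambda}$ vanishes at $0$ and has non-negative derivative $\lambda(2-e^{-\lambda})$. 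For \eqref{eqn:BoundDelta}, the key observation is that whenever $i\ge n$ we have $|\Delta f_{\{0\}}(i)|=f_{\{0\}}(i)-f_{\{0\}}(i+1)\le f_{\{0\}}(i)\le f_{\{0\}}(n)$ by positivity and monotonicity, so it suffices to estimate $f_{\{0\}}(n)$: the formula gives $f_{\{0\}}(n)\le (n-1)!/\lambda^n$ since $\mathbb{P}(P_\lambda\ge n)\le 1$, while $n\,f_{\{0\}}(n)=\frac{n!}{\lambda^n}\mathbb{P}(P_\lambda\ge n)=\sum_{\ell\ge 0}e^{-\lambda}\frac{n!\,\lambda^\ell}{(n+\ell)!}\le\sum_{\ell\ge 0}e^{-\lambda}\frac{\lambda^\ell}{\ell!}=1$ because $(n+1)\cdots(n+\ell)\ge\ell!$, so $f_{\{0\}}(n)\le 1/n$; combining the two bounds yields \eqref{eqn:BoundDelta}. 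Finally \eqref{eq: bndOndeltav} is immediate from the factorization: for $i\ge v+1$, $\Delta f_{\{0,\dots,v\}}(i)=e^{\lambda}\,\mathbb{P}(P_\lambda\le v)\,\Delta f_{\{0\}}(i)\le 0\le 1\wedge\frac{(v+1)^2}{\lambda^2}$ by \eqref{Deltaf0Negative}.

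I expect the main obstacle to be purely organisational: the re-indexing of Poisson tail sums underpinning both the monotonicity of $f_{\{0\}}$ and the bound $n\,f_{\{0\}}(n)\le 1$ must be set up carefully, and one must notice that the factorization of $f_{\{0,\dots,v\}}$ through $f_{\{0\}}$ turns the last estimate into a triviality — indeed it yields the stronger $\Delta f_{\{0,\dots,v\}}(i)\le 0$, so the hypotheses $v\le\lambda$ and $i\ge v+2$ are more than enough. All the remaining inequalities are elementary one-variable estimates.
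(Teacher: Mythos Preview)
Your proof is correct. The overall structure is the same as the paper's --- start from the explicit formula of Lemma~\ref{LemmaSteinEq}, derive the series/tail representation of $f_{\{0\}}$, and use the factorization $f_{\{0,\dots,v\}}(i)=e^{\lambda}\,\mathbb P(P_\lambda\le v)\,f_{\{0\}}(i)$ for $i\ge v+1$ --- but your execution differs in a pleasant way: you prove the monotonicity \eqref{Deltaf0Negative} first and then lean on it, reducing \eqref{boundf_0} to the two base cases $i=1,2$ and reducing \eqref{eqn:BoundDelta} to the single estimate $f_{\{0\}}(n)\le \frac{1}{n}\wedge\frac{(n-1)!}{\lambda^n}$ via $|\Delta f_{\{0\}}(i)|\le f_{\{0\}}(i)\le f_{\{0\}}(n)$. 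The paper instead writes out the full series for $\Delta f_{\{0\}}(i)$ and bounds it term by term, which is a bit longer but has the advantage of giving a direct handle on $|\Delta f_{\{0\}}(i)|$ itself rather than going through $f_{\{0\}}(n)$.

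One point worth flagging: for \eqref{eq: bndOndeltav} you observe, correctly, that the factorization plus \eqref{Deltaf0Negative} already gives $\Delta f_{\{0,\dots,v\}}(i)\le 0$ for $i\ge v+1$, so the stated upper bound is trivially satisfied and the hypotheses $v\le\lambda$, $i\ge v+2$ are not even needed. The paper, however, actually proves the two-sided bound $|\Delta f_{\{0,\dots,v\}}(i)|\le 1\wedge\frac{(v+1)^2}{\lambda^2}$ (combining the factorization with \eqref{eqn:BoundDelta} for $n=v+2$ and the estimate $e^{\lambda}\mathbb P(P_\lambda\le v)\cdot\frac{(v+1)!}{\lambda^{v+2}}\le\frac{(v+1)^2}{\lambda^2}$), and it is this absolute-value version that is invoked later in the proof of Theorem~\ref{Thmmm2}. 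So the statement as written appears to be missing an absolute value; your argument proves exactly what is stated, while the paper proves what is actually used.
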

\begin{proof}
Obviously, the upper bound $1$ in \eqref{boundf_0} follows from Lemma \ref{lem:BoundfA}. Lemma \ref{LemmaSteinEq} yields for $i\in\mathbb{N}$ that
\begin{equation}\label{eqn:f0}
f_{\{0\}}(i)
= \frac{(i-1)!}{\lambda^{i}} (1-\mathbb{P}(P_\lambda\leq i-1))
= \frac{(i-1)!}{\lambda^{i}} \sum_{m=i}^\infty \frac{\lambda^m}{m!}e^{-\lambda}
= \sum_{\ell=0}^\infty \frac{\lambda^\ell}{(i+\ell)!}(i-1)!e^{-\lambda}.
\end{equation}
This implies \eqref{boundf_0} for  $i=1,2$, and yields for $i\geq 3$ that
\begin{align*}
f_{\{0\}}(i)
= \sum_{\ell=0}^\infty \frac{\lambda^\ell}{(i+\ell)!}(i-1)!e^{-\lambda} = \frac{1}{\lambda^2}\sum_{\ell=0}^\infty \frac{\lambda^{\ell+2}}{(\ell+2)!}\frac{(i-1)!(\ell+2)!}{(i+\ell)!}e^{-\lambda}.
\end{align*}
Thus, the elementary inequalities
$$
\frac{(i-1)!(\ell+2)!}{(i+\ell)!} = \frac{(i-1)!}{(\ell+3) \cdot \hdots \cdot (\ell+i)}\leq \frac{2 (i-1)!}{i!}\leq 1  
$$
establish \eqref{boundf_0} for $i\geq3$.
From \eqref{eqn:f0} we also obtain for $n\in\N$,
\begin{align*}
\Delta f_{\{0\}}(i) & = \sum_{\ell=0}^\infty \bigg( \frac{\lambda^\ell}{(i+1+\ell)!}i! - \frac{\lambda^\ell}{(i+\ell)!}(i-1)! \bigg) e^{-\lambda} 
\\
& = \sum_{\ell=0}^\infty  \frac{\lambda^\ell}{(i+1+\ell)!} \big( i! - (i+1+\ell)(i-1)!\big) e^{-\lambda} 
\\
& = - \sum_{\ell=0}^\infty  \frac{\lambda^\ell}{(i+1+\ell)!} (\ell+1)(i-1)! e^{-\lambda} 
\\
& = - \sum_{\ell=0}^\infty  \frac{\lambda^\ell}{(n+\ell)!}\frac{(\ell+1) (n+\ell)! (i-1)!}{(i+1+\ell)!}  e^{-\lambda},
\end{align*}
which proves \eqref{Deltaf0Negative}. For $i,n\in\N$ with $i\geq n$ the elementary inequalities
$$
\frac{(\ell+1) (n+\ell)! (i-1)!}{(i+1+\ell)!} 
\leq \frac{(n+\ell)! (i-1)!}{(i+\ell)!} 
\leq (n-1)!
$$
lead to
$$
\big\vert \Delta f_{\{0\}}(i) \big\vert 
\leq (n-1)!  e^{-\lambda} \sum_{\ell=0}^\infty  \frac{\lambda^\ell}{(n+\ell)!} .
$$
Now the observations that
$$
\sum_{\ell=0}^\infty  \frac{\lambda^\ell}{(n+\ell)!}  \leq \frac{e^{\lambda}}{\lambda^n} \quad \text{and} \quad
\sum_{\ell=0}^\infty  \frac{\lambda^\ell}{(n+\ell)!} \leq \frac{1}{n!} \sum_{\ell=0}^\infty  \frac{\lambda^\ell}{\ell!} \frac{\ell! n!}{(n+\ell)!} \leq \frac{1}{n!} \sum_{\ell=0}^\infty  \frac{\lambda^\ell}{\ell!}  \leq \frac{e^{\lambda}}{n!}
$$
show \eqref{eqn:BoundDelta}. Finally assume $\lambda\geq v$. By Lemma \ref{LemmaSteinEq},   we obtain  for  $i\geq v+2$,
$$
\Delta f_{\{0,\dots, v\}} (i) = e^\lambda \mathbb{P}(P_\lambda \in \{ 0,\dots , v\}) \Delta f_{\{0\}}(i).
$$
Then \eqref{eqn:BoundDelta} with $n=v+2$ implies that  
$$
\vert \Delta f_{\{0,\dots, v\}} (i)\vert 
\leq \frac{(v+1)!}{\lambda^{v+2}}\sum_{\ell =0}^v \frac{\lambda^\ell}{\ell!}
= \frac{(v+1)!}{\lambda^{2}}\sum_{\ell =0}^v \frac{\lambda^{\ell-v}}{\ell!}\leq  \frac{(v+1)^2}{\lambda^2}, 
$$
where we used the inequality $ \lambda^{\ell-v}/\ell!\leq 1/ v!$ for $\ell=0,\dots, v$ and    $\lambda\geq v$ in the last step. This and Lemma \ref{lem:BoundfA} establish \eqref{eq: bndOndeltav}.
\end{proof}

The next proposition compares the distributions of an integer-valued random variable and a Poisson distributed random variable.

\begin{proposition}\label{oid}
Let $X$ be a random variable taking values in $\mathbb{N}_0$, let $\lambda\in(0,\infty)$, and define
$$
\mathcal{D}(i)
=i\mathbb{P}(X=i)-\lambda\mathbb{P}(X=i-1), 
\quad i\in\mathbb{N}.
$$
Then, for all   $g\in\mathrm{Lip}(1)$,
$$
\mathbb{E}[g(P_\lambda)] - \mathbb{E}[g(X)]
= \sum_{i=1}^\infty  f_{g}(i) \mathcal{D}(i),
$$
where   $f_g$ is the solution of \eqref{steinEq-W}.
\end{proposition}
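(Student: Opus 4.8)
\emph{Proof plan.} The plan is to run the standard Chen--Stein argument for Poisson approximation: substitute $X$ into the Stein equation \eqref{steinEq-W} for $g$, take expectations, and then reorganise the resulting series according to the law of $X$ so as to read off the claimed identity.

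First I would record that $\mathbb{E}[g(P_\lambda)]$ is finite, since $P_\lambda$ is integrable and $g\in\mathrm{Lip}(1)$ grows at most linearly ($|g(i)|\le|g(0)|+i$); hence the solution $f_g$ of \eqref{steinEq-W} is well defined, satisfies $f_g(0)=0$, and by Lemma \ref{thm: bndfg} obeys $\sup_{i\in\mathbb{N}_0}|f_g(i)|\le 1$. Evaluating \eqref{steinEq-W} at $i=X$ and taking expectations gives
\[
\mathbb{E}\big[\lambda f_g(X+1)-X f_g(X)\big]=\mathbb{E}[g(X)]-\mathbb{E}[g(P_\lambda)],
\]
and therefore $\mathbb{E}[g(P_\lambda)]-\mathbb{E}[g(X)]=\mathbb{E}[X f_g(X)]-\lambda\,\mathbb{E}[f_g(X+1)]$. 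I would then expand both expectations over $\mathbb{P}(X=i)$, noting that the $i=0$ summand of $\mathbb{E}[Xf_g(X)]=\sum_{i\ge 0} i f_g(i)\mathbb{P}(X=i)$ vanishes (by the factor $i$, and also because $f_g(0)=0$) and re-indexing $\lambda\,\mathbb{E}[f_g(X+1)]=\lambda\sum_{i\ge 0} f_g(i+1)\mathbb{P}(X=i)$ via $j=i+1$ into $\lambda\sum_{j\ge 1} f_g(j)\mathbb{P}(X=j-1)$. Subtracting and collecting the coefficient of $f_g(i)$ yields
\[
\mathbb{E}[g(P_\lambda)]-\mathbb{E}[g(X)]=\sum_{i=1}^\infty f_g(i)\big(i\,\mathbb{P}(X=i)-\lambda\,\mathbb{P}(X=i-1)\big)=\sum_{i=1}^\infty f_g(i)\,\mathcal{D}(i),
\]
which is the assertion.

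The only delicate point — and the main, though mild, obstacle — is the justification of interchanging expectation and summation and of the re-indexing. Everything here rests on the uniform bound $|f_g|\le 1$ from Lemma \ref{thm: bndfg} together with the crude estimate $\mathbb{E}\big[|Xf_g(X)|\big]+\lambda\,\mathbb{E}\big[|f_g(X+1)|\big]\le\mathbb{E}[X]+\lambda$, which makes all series in play absolutely convergent as soon as $\mathbb{E}[X]<\infty$; the degenerate case $\mathbb{E}[X]=\infty$ can be handled separately (both sides are then equal in the extended sense). Apart from this bookkeeping the argument is a single application of the Stein equation and involves no further difficulty.
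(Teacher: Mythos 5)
Your proof is correct and follows essentially the same route as the paper's: substitute $X$ into the Stein equation \eqref{steinEq-W}, take expectations, expand over $\mathbb{P}(X=i)$, and re-index the $f_g(X+1)$ sum to collect the coefficient of $f_g(i)$ into $\mathcal{D}(i)$. The extra attention you pay to absolute convergence of the series (via $|f_g|\le 1$ and $\mathbb{E}[X]<\infty$) is a minor but sensible addition that the paper leaves implicit.
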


\begin{proof}
It follows from Lemma \ref{LemmaSteinEq} and the definition of $\mathcal{D}(i)$, $i\in\mathbb{N}$, that
\begin{align*}
\mathbb{E}[g(P_\lambda)] - \mathbb{E}[g(X)]
& = \mathbb{E}[ X f_g(X)- \lambda f_g(X+1)]  
=  \sum_{i=0}^\infty \mathbb{P}(X=i)(if_g(i) -\lambda f_g (i+1)) 
\\
& =  \sum_{i=1}^\infty \mathbb{P}(X=i) i f_g(i) - \sum_{i=1}^\infty \mathbb{P}(X=i-1)\lambda f_g(i) 
= \sum_{i=1}^\infty  f_{g}(i) \mathcal{D}(i),
\end{align*}
which gives the desired result.  
\end{proof}
From Proposition \ref{oid} we derive the identity
$$
\mathbb{P}(P_\lambda\in A) - \mathbb{P}(X\in A) 
= \sum_{i=1}^\infty  f_{A}(i) \mathcal{D}(i), \quad A\subset \N_0,
$$
where $f_A$  is the solution of \eqref{steinEq}.
We are now in position to show Theorem \ref{Thmmm2}.

\begin{proof}[Proof of Theorem \ref{Thmmm2}]
It follows from \eqref{Mek2} that
$$
\mathcal{D}(i)
=i\mathbb{P}(X=i)-\lambda\mathbb{P}(X=i-1) 
= \lambda \mathbb{P}(X + Z= i-1) -\lambda\mathbb{P}(X=i-1) + q_{i-1}  , 
\quad i\in\mathbb{N}.
$$
Thus, Proposition \ref{oid} yields for $g\in\mathrm{Lip}(1)$ that 
\begin{equation}\label{eqn:DecompositionHQ-W}
\begin{split}
& \mathbb{E}[g(P_\lambda)] - \mathbb{E}[g(X)]\\
& =  \lambda \sum_{i=1}^\infty  f_{g}(i) \big( \mathbb{P}(X + Z= i-1) - \mathbb{P}(X = i-1)\big)  +  \sum_{i=1}^\infty f_{g}(i) q_{i-1}
 =: H_g + Q_g .
\end{split}
\end{equation}
With $f_g(0)=0$ and the convention $f_g(i)=0$ for $i<0$, we obtain
\begin{equation}\label{eqn:HA}
\begin{split}
H_g & = \lambda \sum_{j\in\Z\setminus\{0\}}\sum_{i=1}^\infty f_{g}(i) \big(   \mathbb{P}(X = i-1 -j, Z =j) - \mathbb{P}(X = i-1, Z =j) \big) 
\\
& = \lambda \sum_{j\in\Z\setminus\{0\}}\sum_{i\in\Z} f_{g}(i) \big(   \mathbb{P}(X = i-1 -j, Z =j) - \mathbb{P}(X = i-1, Z = j) \big)
\allowdisplaybreaks
\\
& = \lambda \sum_{j\in\Z\setminus\{0\}}\sum_{i\in\Z} f_{g}(i+j) \mathbb{P}(X = i-1 , Z =j) - f_{g}(i) \mathbb{P}(X = i-1, Z =j)
\allowdisplaybreaks
\\
& = \lambda \sum_{j\in\Z\setminus\{0\}}\sum_{i\in\Z} \big( f_{g}(i+j) - f_{g}(i) \big) \mathbb{P}(X = i-1 , Z =j)
\\
& = \lambda \sum_{j\in\Z\setminus\{0\}}\sum_{i=1}^\infty \big( f_{g}(i+j) - f_{g}(i) \big) \mathbb{P}(X = i-1 , Z =j),
\end{split}
\end{equation}
where we used that $X$ takes only values in $\N_0$ in the last step. The triangle inequality implies that 
\begin{equation*}
\begin{split}
|H_g| & \leq     \lambda  \max_{i\in\mathbb{N}_0} |\Delta f_g(i)|  \sum_{j\in\Z\setminus\{0\}}\sum_{i=1}^\infty \vert j \vert \mathbb{P}(X = i-1 , Z =j)
\\
& =   \lambda  \max_{i\in\mathbb{N}_0} |\Delta f_g(i)|   \sum_{j\in\Z\setminus\{0\}} \vert j \vert \mathbb{P}( Z =j) 
=  \lambda  \max_{i\in\mathbb{N}_0} |\Delta f_g(i)|   \mathbb{E}[\vert Z \vert ]. 
\end{split}
\end{equation*}
Furthermore, we have
\begin{equation}\label{bndQ_g}
|Q_g| \leq \max_{i\in\mathbb{N}} | f_g(i)|\sum_{i=0}^\infty \vert q_i \vert.
\end{equation}
Then  combining  \eqref{eq:  fNo} and the bounds on $\vert H_g \vert $ and $\vert Q_g\vert$ establishes \eqref{wass-mainThm}. Moreover, from  Lemma \ref{lem:BoundfA} and the bounds on $\vert H_g \vert $ and $\vert Q_g\vert$ with $g=\mathbf{1}_A$ for $A\subset\N_0$, we obtain \eqref{fsxx2}. 

Next, we notice that 
\begin{equation}\label{eqn: decompH_g}
	\begin{split}
		|H_g| 
		& \leq  \lambda \sum_{j=1}^\infty\sum_{i=1}^\infty \vert f_{g}(i+j) - f_{g}(i) \vert\mathbb{P}(X = i-1 , Z =j) 
		\\
		& \quad +  \lambda \sum_{j=1}^\infty\sum_{i=1}^\infty \vert f_{g}(i-j) - f_{g}(i) \vert \mathbb{P}(X = i-1 , Z =-j).
	\end{split}
\end{equation}
The assumption $\mathbb{P}(X+Z\geq 0)=1$ implies that $\mathbb{P}(X = i-1, Z=-j)=0$ for all $i,j\in\mathbb{N}$ with $i\leq j$.
Hence, we obtain
\begin{equation}\begin{split}\label{eqn: repr H_g2}
&\lambda \sum_{j=1}^\infty\sum_{i=1}^\infty \vert f_{g}(i-j) - f_{g}(i) \vert \mathbb{P}(X = i-1 , Z =-j)
\\
&= \lambda \sum_{j=1}^\infty\sum_{i=j+1}^\infty \big\vert f_{g}(i-j)- f_{g}(i)\big\vert \mathbb{P}(X = i-1, Z=-j) 
	\\ 
	& = \lambda \sum_{j=1}^\infty\sum_{i=1}^\infty \big\vert f_{g}(i)- f_{g}(i+j)\big\vert \mathbb{P}(X = i+ j-1, Z=-j). 
\end{split}\end{equation}
From \eqref{eqn: decompH_g}, \eqref{eqn: repr H_g2} and the triangle inequality it follows that
\begin{align*}
		|H_g| 
& \leq \lambda  \max_{i\in\mathbb{N}} |\Delta f_g(i)|  \sum_{j=1}^\infty j\Big(\sum_{i=1}^\infty  \mathbb{P}(X = i-1 , Z =j) + \sum_{i=1}^\infty  \mathbb{P}(X = i+j-1 , Z =-j)\Big)
\\
& \leq   \lambda  \max_{i\in\mathbb{N}} |\Delta f_g(i)|   \sum_{j\in\Z\setminus\{0\}} \vert j \vert \mathbb{P}( Z =j) 
=  \lambda  \max_{i\in\mathbb{N}} |\Delta f_g(i)|   \mathbb{E}[\vert Z \vert ].
\end{align*}
Together with \eqref{eqn:DecompositionHQ-W} and \eqref{bndQ_g}, this implies that
$$
\vert \mathbb{E}[g(P_\lambda)] - \mathbb{E}[g(X)] \vert \leq \lambda  \max_{i\in\mathbb{N}} |\Delta f_g(i)|   \mathbb{E}[\vert Z \vert ] + \max_{i\in\mathbb{N}_0} | f_g(i)|\sum_{i=0}^\infty \vert q_i \vert.
$$
Hence, Lemma \ref{thm: bndfg}   establishes \eqref{wass-mainThm-2}.

Combining \eqref{eqn:DecompositionHQ-W}, \eqref{eqn: decompH_g} and \eqref{eqn: repr H_g2} with $g=\mathbf{1}_A$ for $A\subset\N_0$ yields
\begin{align}\label{eqn:DecompositionHQ}
	\mathbb{P}(P_\lambda\in A) - \mathbb{P}(X\in A) =: H_A + Q_A
\end{align}
where $H_A= H_{g}$ and $Q_A= Q_{g}$ with $g=\mathbf{1}_A$, and
\begin{equation*}
	\begin{split}
		|H_A| 
		& \leq  \lambda \sum_{j=1}^\infty\sum_{i=1}^\infty \vert f_{A}(i+j) - f_{A}(i) \vert\mathbb{P}(X = i-1 , Z =j) 
		\\
		& \quad +  \lambda \sum_{j=1}^\infty\sum_{i=1}^\infty \vert f_{A}(i) - f_{A}(i+j) \vert \mathbb{P}(X = i+j-1 , Z =-j)
		=: H^{(1)}_A + H^{(2)}_A.
	\end{split}
\end{equation*}
For $A=\{0\}$, by \eqref{eqn:BoundDelta} in Lemma \ref{lemma} with $n=i$ for $i\leq m$ and $n=m+1$ for $i\geq m+1$, we have
\begin{equation*} 
\begin{split} 
H^{(1)}_{\{0\}}
& \leq  \sum_{j=1}^\infty  \sum_{i=1}^{m}  \bigg( \frac{\lambda}{i} \wedge \frac{(i-1)!}{\lambda^{i-1}} \bigg)  j \mathbb{P}(X = i-1, Z =j) 
+ \sum_{j=1}^\infty  \sum_{i=m+1}^{\infty} \frac{m!}{\lambda^{m}} j \mathbb{P}(X = i-1, Z =j) 
\\
& = \sum_{k=0}^{m-1}\bigg( \frac{\lambda}{k+1} \wedge \frac{k!}{\lambda^{k}} \bigg) \mathbb{E}[ Z_+  \mathbf{1}\{X=k\}] + \frac{m!}{\lambda^{m}}\mathbb{E}[ Z_+ \mathbf{1}\{X\geq m\}]. 
\end{split}
\end{equation*}
Again \eqref{eqn:BoundDelta} in Lemma \ref{lemma} with $n=i$ for $i\leq m$ and $n=m+1$  for $i\geq m+1$ leads to
\begin{equation*}
\begin{split}
H^{(2)}_{\{0\}}
& \leq  \sum_{j=1}^\infty  \sum_{i=1}^{m}  \bigg( \frac{\lambda}{i} \wedge \frac{(i-1)!}{\lambda^{i-1}} \bigg)    j  \mathbb{P}(X = i+ j -1, Z=-j) 
\\
& \quad + \sum_{j=1}^\infty  \sum_{i=m+1}^{\infty} \frac{m!}{\lambda^{m}}   j  \mathbb{P}(X = i+j -1, Z=-j) 
\\
& = \sum_{i=1}^{m}  \bigg( \frac{\lambda}{i} \wedge \frac{(i-1)!}{\lambda^{i-1}} \bigg) \mathbb{E}[ Z_- \mathbf{1}\{X + Z= i-1\}] + \frac{m!}{\lambda^{m}} \mathbb{E}[ Z_- \mathbf{1}\{X + Z \geq m\}] 
\\
& = \sum_{k=0}^{m-1}  \bigg( \frac{\lambda}{k+1} \wedge \frac{k!}{\lambda^{k}} \bigg) \mathbb{E}[ Z_- \mathbf{1}\{X + Z= k\}] + \frac{m!}{\lambda^{m}} \mathbb{E}[ Z_- \mathbf{1}\{X + Z \geq m\}].
\end{split}
\end{equation*}
From \eqref{boundf_0} in Lemma \ref{lemma} it follows that
\begin{equation*}
|Q_{\{0\}}|
\leq \bigg(1 \wedge \frac{1}{\lambda}\bigg) \vert q_0 \vert  + \bigg(1 \wedge \frac{1}{\lambda^2}\bigg) \sum_{i=1}^{\infty} \vert q_i \vert .
\end{equation*}
Combining \eqref{eqn:DecompositionHQ} and the  bounds on $|Q_{\{0\}}|, H^{(1)}_{\{0\}} $ and  $H^{(2)}_{\{0\}}$ completes the proof of \eqref{rrrf2}. 

For $\lambda<v$, \eqref{bnd-gen-on-v} follows directly from \eqref{fsxx2}. 
By Lemma \ref{lem:BoundfA} for $i\leq v+1$ and \eqref{eq: bndOndeltav} in Lemma \ref{lemma} for $i\geq v+2$, we obtain 
\begin{align*} 
H^{(1)}_{\{0,\dots, v\}} 
& \leq   (1 \wedge \lambda) \sum_{j=1}^\infty\sum_{i=1}^{v+1} j \mathbb{P}(X = i-1 , Z =j) 
  +  \sum_{j=1}^\infty\sum_{i=v+2}^{\infty} \frac{ (v+1)^2 }{\lambda} j \mathbb{P}(X = i-1 , Z =j) 
\notag
\\
& =  (1 \wedge \lambda) \mathbb{E}[Z_+ \mathbf{1}\{X\leq v\}] +  \frac{(v+1)^2}{\lambda}\mathbb{E}[Z_+ \mathbf{1}\{ X\geq	 v+1\}]
\end{align*}
and
\begin{align*}
H^{(2)}_{\{0,\dots, v\}}
& \leq   (1\wedge \lambda)\sum_{j=1}^\infty  \sum_{i=1}^{v+1}      j  \mathbb{P}(X = i+ j -1, Z=-j) \\
&\quad +
  \sum_{j=1}^\infty  \sum_{i=v+2}^{\infty}  \frac{ (v+1)^2 }{\lambda }  j  \mathbb{P}(X = i+j -1, Z=-j) 
\\
& = (1\wedge \lambda)  \mathbb{E}[ Z_- \mathbf{1}\{X + Z\leq v\}] +  \frac{(v+1)^2}{\lambda } \mathbb{E}[ Z_- \mathbf{1}\{X + Z \geq v+1\}] .
 \end{align*}
Moreover, Lemma \ref{lem:BoundfA} yields
$$
\vert Q_{\{0,\dots , v\}} \vert \leq \max_{i\in\mathbb{N}_0} | f_{\{0,\dots, v\}}(i)|\sum_{i=0}^\infty \vert q_i \vert \leq \left(1 \wedge \frac{1}{\sqrt{\lambda}}\right) \sum_{i=0}^{\infty} \vert q_i \vert.
$$
Combining \eqref{eqn:DecompositionHQ} with $A=\{0,\dots, v\}$ and the bounds on $\vert Q_{\{0,\dots , v\}} \vert,  H^{(1)}_{\{0,\dots, v\}} $ and $H^{(2)}_{\{0,\dots, v\}}$ establishes \eqref{bnd-gen-on-v}.
\end{proof}

Next we derive Theorem \ref{Thmmm} from Theorem \ref{Thmmm2}.

\begin{proof}[Proof of Theorem \ref{Thmmm}]
It follows from \eqref{Mek} that $X$ and $Z$ satisfy \eqref{Mek2} with $\lambda=\mathbb{E}[X]$ and $q_i=0$ for $i\in\N_0$ and that
\begin{align*}
\lambda 
= \mathbb{E}[X]
= \sum_{k=1}^\infty k \mathbb{P}(X=k) 
= \sum_{k=1}^\infty \lambda \mathbb{P}(X+Z=k-1)
=\lambda \mathbb{P}(X + Z \geq 0),
\end{align*}
whence $\mathbb{P}(X + Z\geq 0)=1$. This allows us to apply Theorem \ref{Thmmm2} which proves \eqref{fsxx}, \eqref{rrrf} and \eqref{eq:boundOnv}.
\end{proof}

The next result provides some inequalities for the probability that a non-negative integer-valued random variable equals zero.

\begin{proposition}\label{prop-bound-0}
Let $X$ be a random variable with values in $\mathbb{N}_0$ and $\lambda>0$. Consider a random variable $Z$ defined on the same probability space as $X$ with values in $\Z$, and let  $(q_i)_{i\in\N_0}$ be the sequence given by
	\begin{align*}
	q_{i-1}= i\mathbb{P}(X=i)-\lambda \mathbb{P}(X + Z = i-1), \quad i\in\N.
\end{align*}
\begin{itemize}
\item [a)] If $Z$ is non-negative and $q_i\leq 0$ for $i\in\N_0$,
\begin{align*}
\mathbb{P}(X=0) 
\geq e^{-\lambda}.
\end{align*}
\item [b)] If $Z$ is non-positive, $\mathbb{P}(X + Z \geq 0)=1$ and $q_i \geq 0$ for $i\in\N_0$, 
\begin{align*}
\mathbb{P}(X=0) 
\leq e^{-\lambda}.
\end{align*}
\end{itemize}
\end{proposition}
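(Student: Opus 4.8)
The plan is to specialize the identity underlying Theorem \ref{Thmmm2} to the set $A=\{0\}$ and to exploit the sign information on $f_{\{0\}}$ recorded in Lemma \ref{lemma}. Applying the identity displayed right after Proposition \ref{oid} to $A=\{0\}$ together with $\mathbb{P}(P_\lambda=0)=e^{-\lambda}$ gives
$$
e^{-\lambda}-\mathbb{P}(X=0)
= \sum_{i=1}^\infty f_{\{0\}}(i)\,\mathcal{D}(i),
\qquad \mathcal{D}(i)=i\mathbb{P}(X=i)-\lambda\mathbb{P}(X=i-1).
$$
Substituting $i\mathbb{P}(X=i)=\lambda\mathbb{P}(X+Z=i-1)+q_{i-1}$ and proceeding as in \eqref{eqn:DecompositionHQ-W} and \eqref{eqn:DecompositionHQ}, I would write the right-hand side as $H_{\{0\}}+Q_{\{0\}}$ with
$$
H_{\{0\}}=\lambda\sum_{i=1}^\infty f_{\{0\}}(i)\bigl(\mathbb{P}(X+Z=i-1)-\mathbb{P}(X=i-1)\bigr),
\qquad
Q_{\{0\}}=\sum_{k=0}^\infty f_{\{0\}}(k+1)\,q_k .
$$
The statement then reduces to determining the signs of $H_{\{0\}}$ and $Q_{\{0\}}$ under the two sets of hypotheses. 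For $Q_{\{0\}}$ this is immediate: the explicit formula in Lemma \ref{LemmaSteinEq} gives $f_{\{0\}}(i)=\tfrac{(i-1)!}{\lambda^i}\bigl(1-\mathbb{P}(P_\lambda\le i-1)\bigr)\ge 0$ for every $i\in\N$, so $Q_{\{0\}}\le 0$ in case a) and $Q_{\{0\}}\ge 0$ in case b).

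For $H_{\{0\}}$ I would use the expansion \eqref{eqn:HA}, namely
$$
H_{\{0\}}=\lambda\sum_{j\in\Z\setminus\{0\}}\sum_{i=1}^\infty\bigl(f_{\{0\}}(i+j)-f_{\{0\}}(i)\bigr)\,\mathbb{P}(X=i-1,Z=j),
$$
with the convention $f_{\{0\}}(i)=0$ for $i\le 0$. In case a) only the terms with $j\ge 1$ contribute, and telescoping gives $f_{\{0\}}(i+j)-f_{\{0\}}(i)=\sum_{\ell=i}^{i+j-1}\Delta f_{\{0\}}(\ell)\le 0$ for $i\ge 1$ by \eqref{Deltaf0Negative}; hence $H_{\{0\}}\le 0$, and $e^{-\lambda}-\mathbb{P}(X=0)=H_{\{0\}}+Q_{\{0\}}\le 0$, which is part a). In case b) only the terms with $j\le -1$ contribute; writing $j=-j'$ with $j'\ge 1$ and using $\mathbb{P}(X+Z\ge 0)=1$ to discard the summands with $i\le j'$, I would reindex the resulting double sum exactly as in the proof of Theorem \ref{Thmmm2} to get
$$
H_{\{0\}}=\lambda\sum_{j'=1}^\infty\sum_{i=1}^\infty\bigl(f_{\{0\}}(i)-f_{\{0\}}(i+j')\bigr)\,\mathbb{P}(X=i+j'-1,Z=-j'),
$$
where $f_{\{0\}}(i)-f_{\{0\}}(i+j')=-\sum_{\ell=i}^{i+j'-1}\Delta f_{\{0\}}(\ell)\ge 0$ for $i\ge 1$ by \eqref{Deltaf0Negative}; hence $H_{\{0\}}\ge 0$ and $e^{-\lambda}-\mathbb{P}(X=0)\ge 0$, which is part b).

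The sign analysis is the heart of the argument and is routine once Lemma \ref{lemma} is available. The one point that needs a word of care is the legitimacy of rearranging the double series defining $H_{\{0\}}$: under each set of hypotheses every summand in the relevant expression carries a fixed sign, so Tonelli's theorem applies, and the resulting (signed) sum is finite since it equals the bounded quantity $e^{-\lambda}-\mathbb{P}(X=0)$ (which in turn shows that the series for $Q_{\{0\}}$ converges). I expect this bookkeeping to be the only, and a fairly mild, obstacle.
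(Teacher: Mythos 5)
Your proof is correct and follows essentially the same argument as the paper: write $e^{-\lambda}-\mathbb{P}(X=0)$ as $H_{\{0\}}+Q_{\{0\}}$ via the identity from Proposition \ref{oid} and the expansion \eqref{eqn:HA}, then read off the signs of both pieces from the nonnegativity of $f_{\{0\}}$ (Lemma \ref{LemmaSteinEq}) and the monotonicity \eqref{Deltaf0Negative}. Your explicit remark about the legitimacy of rearranging the double sum (via fixed signs and Tonelli) is a detail the paper leaves implicit, but the structure of the argument is identical.
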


\begin{proof}
It follows from \eqref{eqn:DecompositionHQ-W} and \eqref{eqn:HA} for $f=\mathbf{1}_{\{0\}}$ as well as $\mathbb{P}(P_\lambda=0)=e^{-\lambda}$ that
\begin{align*}
& e^{-\lambda}- \mathbb{P}(X=0) \\ 
& = \lambda \sum_{j\in\mathbb{Z}\setminus\{0\}}  \sum_{i=1}^\infty \big(f_{\{0\}}(i+j) - f_{\{0\}}(i)\big)\mathbb{P}(X = i-1, Z=j) + \sum_{i=1}^\infty f_{\{0\}}(i)q_{i-1} .
\end{align*}
By the assumption that $Z\geq 0$ (resp.\ $Z\leq 0$ and $\mathbb{P}(X + Z \geq 0)=1$) the first sum on the right-hand side runs only over $j\geq 1$ (resp.\ $j\leq -1$ and the inner sum runs over all $i\in\N$ with $i+j\geq 1$). Together with
$$
f_{\{0\}}(i+j) - f_{\{0\}}(i) \leq 0 \ \text{for} \ i,j\geq 1 \quad \text{and} \quad  f_{\{0\}}(i+j) - f_{\{0\}}(i) \geq 0 \ \text{for} \ j\leq -1,\,\, i+j\geq 1,
$$
which follows from \eqref{Deltaf0Negative} in Lemma \ref{lemma}, and the assumptions on $(q_i)_{i\in\N_0}$,  this leads to the desired results. 
\end{proof}

Since \eqref{Mek} is a special case of \eqref{Mek2} with $\mathbb{P}(X + Z\geq 0)=1$ (see the proof of Theorem \ref{Thmmm}), the following corollary is a direct consequence of Proposition \ref{prop-bound-0}.

\begin{corollary}\label{prpbound1}
Let $X$ be a random variable taking values in $\mathbb{N}_0$ and  let $\lambda=\mathbb{E}[X]>0$. Assume there exists a  random variable $Z$ such that \eqref{Mek} is satisfied. 
\begin{itemize}
\item [a)] If $Z$ is non-negative,
\begin{align*}
\mathbb{P}(X=0) 
\geq e^{-\lambda}.
\end{align*}
\item [b)] If $Z$ is non-positive,
\begin{align*}
\mathbb{P}(X=0) 
\leq e^{-\lambda}.
\end{align*}
\end{itemize}
\end{corollary}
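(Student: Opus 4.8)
The final statement to prove is Corollary~\ref{prpbound1}, which asserts that for a non-negative integer-valued random variable $X$ with $\lambda=\mathbb{E}[X]>0$ admitting a random variable $Z$ satisfying the exact coupling identity \eqref{Mek}, the sign of $Z$ controls whether $\mathbb{P}(X=0)$ lies above or below $e^{-\lambda}$: if $Z\geq 0$ then $\mathbb{P}(X=0)\geq e^{-\lambda}$, and if $Z\leq 0$ then $\mathbb{P}(X=0)\leq e^{-\lambda}$.

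The plan is to deduce this directly from Proposition~\ref{prop-bound-0} by verifying that the hypotheses of that proposition are met. First I would recall, exactly as in the proof of Theorem~\ref{Thmmm}, that \eqref{Mek} is the special case of the relation \eqref{Mek2} in which all error terms vanish: summing $i\mathbb{P}(X=i)=\lambda\mathbb{P}(X+Z=i-1)$ over $i\in\N$ gives $\lambda=\mathbb{E}[X]=\lambda\mathbb{P}(X+Z\geq 0)$, so that $\mathbb{P}(X+Z\geq 0)=1$, and moreover $q_i=0$ for every $i\in\N_0$. Thus the triple $(X,Z,(q_i))$ associated to \eqref{Mek} satisfies $q_i=0\leq 0$ and $q_i=0\geq 0$ for all $i$, and also $\mathbb{P}(X+Z\geq 0)=1$.

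With these observations in hand, part~a) follows immediately: if $Z$ is non-negative, then $Z\geq 0$ and $q_i\leq 0$ for all $i\in\N_0$, so Proposition~\ref{prop-bound-0}~a) applies and yields $\mathbb{P}(X=0)\geq e^{-\lambda}$. Likewise part~b) follows from Proposition~\ref{prop-bound-0}~b): if $Z$ is non-positive, then $Z\leq 0$, $q_i\geq 0$ for all $i\in\N_0$, and $\mathbb{P}(X+Z\geq 0)=1$ as just shown, so the proposition gives $\mathbb{P}(X=0)\leq e^{-\lambda}$. Since no real computation remains once the reduction \eqref{Mek}$\Rightarrow$\eqref{Mek2} with $q_i\equiv 0$ and $\mathbb{P}(X+Z\geq 0)=1$ is recorded, there is no genuine obstacle here; the only point requiring a moment's care is to note that the condition $q_i\geq 0$ required in part~b) of the proposition is satisfied by $q_i=0$, and similarly $q_i\leq 0$ in part~a) — i.e.\ that the zero sequence qualifies under both one-sided sign conditions. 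The statement of the corollary is essentially a specialization, so the proof is a short remark rather than an argument.

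\begin{proof}[Proof of Corollary~\ref{prpbound1}]
As shown in the proof of Theorem~\ref{Thmmm}, if $X$ and $Z$ satisfy \eqref{Mek} with $\lambda=\mathbb{E}[X]$, then they satisfy \eqref{Mek2} with $q_i=0$ for all $i\in\N_0$, and $\mathbb{P}(X+Z\geq 0)=1$. In particular the sequence $(q_i)_{i\in\N_0}$ is identically zero, hence simultaneously satisfies $q_i\leq 0$ and $q_i\geq 0$ for all $i\in\N_0$. If $Z$ is non-negative, Proposition~\ref{prop-bound-0}~a) gives $\mathbb{P}(X=0)\geq e^{-\lambda}$. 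If $Z$ is non-positive, then since additionally $\mathbb{P}(X+Z\geq 0)=1$, Proposition~\ref{prop-bound-0}~b) gives $\mathbb{P}(X=0)\leq e^{-\lambda}$.
\end{proof}
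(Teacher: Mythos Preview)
Your proof is correct and follows exactly the same approach as the paper: the paper simply remarks that \eqref{Mek} is the special case of \eqref{Mek2} with $q_i\equiv 0$ and $\mathbb{P}(X+Z\geq 0)=1$ (referring to the proof of Theorem~\ref{Thmmm}), so that the corollary is an immediate consequence of Proposition~\ref{prop-bound-0}.
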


\section{Applications}\label{Sec: applic}

\subsection{$U$-statistics of binomial point processes}\label{U-stt-B-prf}

Let $(\mathbb X, \mathcal X)$ be a measurable space. A point process on $\mathbb X$ is a random element in the set of all $\sigma$-finite counting measures on $\mathbb X$, denoted by $\mathbf N_{\mathbb X}$, which is  measurable with respect to the $\sigma$-field generated by the sets of the form
\begin{align*}
	\{
	\mu\in  \mathbf N_{\mathbb X} \, : \, \mu(B)=k
	\} , \quad k\in\N_0, B\in \mathcal{X}.
\end{align*}
We consider a  binomial point process $\beta_{n}$ on $\mathbb X$ of $n\in\N$ independent points in $\mathbb X$ that are distributed according to a probability measure $K$. Let $\ell\in\N$ and let $h: \mathbb{X}^\ell\to\{0,1\}$ be a measurable symmetric function. In the following we study the $U$-statistic
$$
S=\frac{1}{\ell!} \sum_{(x_{1},...,x_{\ell})\in\beta_{n,\neq}^{\ell}} h(x_{1},...,x_{\ell}),
$$
where $\beta_{n,\neq}^{\ell}$ denotes the set of all $\ell$-tuples of distinct points of $\beta_n$.  We refer to the monographs \cite{MR1472486,MR1075417} for more details on $U$-statistics and their applications in statistics. A straightforward computation shows that
$$
\lambda:=\mathbb{E}[S]=\frac{(n)_\ell}{\ell!}\int_{\mathbb{X}^\ell} h(x_{1},\dots,x_{\ell}) d K^\ell(x_{1},\dots,x_{\ell}),
$$
where $(n)_\ell $ stands for the $\ell$-th descending factorial.

In this subsection, we establish bounds on the Poisson approximation of $S$ in the total variation and Wasserstein distances. We also provide  bounds on the pointwise difference between the cumulative distribution functions of $S$ and $P_{\lambda}$. To this end, we define
$$
 r	= \underset{1\leq i \leq \ell -1}{\max} (n)_{2\ell-i} \int_{\mathbb X^i} \bigg( \int_{\mathbb X^{\ell-i}} h(x_1,\dots , x_\ell)
 d K^{\ell-i}(x_{i+1},\dots , x_\ell) \bigg)^2 d K^i(x_1,\dots,x_i)
$$
for $\ell\geq 2$, and put $r=0$ for $\ell=1$. Moreover for $n\geq 2\ell$, we define
$$
\tilde{S}=\frac{1}{\ell!} \sum_{(x_{1},...,x_{\ell})\in\beta_{n-2\ell,\neq}^{\ell}} h(x_{1},...,x_{\ell}).
$$

\begin{theorem}\label{U-stat-fin-bin}
Let $n\geq 2\ell$ and let $S$, $\lambda>0$, $r$ and $\tilde{S}$ be as above. Then,
\begin{equation}\label{fsxx1132-} 
d_{TV}(S,P_{\lambda}) 	\leq  (1 \wedge \lambda) \bigg( \frac{2^\ell r}{\ell! \lambda} + \frac{2 \ell^2 \lambda}{n} \bigg)
\quad \text{and} \quad
d_W(S, P_{\lambda}) 
 \leq (1.1437 \sqrt{\lambda} \wedge \lambda ) \bigg( \frac{2^\ell r}{\ell! \lambda} + \frac{2 \ell^2 \lambda}{n} \bigg).
\end{equation}
Moreover, for  all $m\in\mathbb{N}$,
	\begin{equation}\label{bin-bnd-0-}\begin{split}
			 \Big\vert \mathbb{P}(S=0) - e^{-\lambda} \Big\vert 
			 \leq \Bigg[\sum_{k=0}^{m-1} \bigg( \frac{\lambda}{k+1} \wedge \frac{k!}{\lambda^{k}} \bigg)  \mathbb{P}\big( \tilde{S} \leq k \big)  + \frac{m!}{\lambda^{m}} \Bigg] 
		\bigg( \frac{2^\ell r}{\ell! \lambda } + \frac{2 \ell^2 \lambda}{n} \bigg)
	\end{split}\end{equation}
	and for all $v\in\N$,
	\begin{align}\label{bin-bnd-0-II}
		&\vert \mathbb{P}(S\leq v) - \mathbb{P}(P_{\lambda} \leq v) \vert 
	\leq \bigg[\frac{(v+1)^2}{\lambda} + \mathbb{P} ( \tilde{S} \leq  v)\bigg] \bigg( \frac{2^\ell r}{\ell! \lambda} + \frac{2\ell^2 \lambda}{n} \bigg).  
	\end{align}
\end{theorem}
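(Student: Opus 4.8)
The plan is to apply Theorem \ref{Thmmm2} to $X = S$ with a suitably constructed random variable $Z$ and to estimate the resulting error terms $q_i$. The natural candidate for an (approximate) size-bias coupling of a $U$-statistic is obtained via the standard construction: pick an ordered $\ell$-tuple of points $(x_1,\dots,x_\ell)$ according to a distribution proportional to $h$ evaluated on $K^\ell$, add these points to a binomial process of $n-\ell$ further points, and count the $U$-statistic over the augmented configuration. The subtraction of $2\ell$ points in the definition of $\tilde S$ signals that the coupling will be realized by letting $\beta_{n-2\ell}$ be a common sub-process: first I would couple $S$ with $S'$, the $U$-statistic over $\beta_{n-2\ell}$ together with $2\ell$ extra independent points, one block of $\ell$ of them being the $h$-biased tuple. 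Then $S^s := S'$ should be close to a size-bias coupling of $S$, and we set $Z = S^s - S - 1$.

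The key steps, in order, are: (i) write down the size-bias identity $\mathbb{E}[S f(S)] = \lambda \mathbb{E}[f(S^s)]$ that would hold \emph{exactly} if we used $\beta_{n-\ell}$ plus the biased $\ell$-tuple, and identify precisely the discrepancy caused by using $\beta_{n-2\ell}$ plus $2\ell$ points instead — this discrepancy is what produces the $q_i$; (ii) bound $\sum_i |q_i|$, which should come out proportional to $\ell^2\lambda/n$, reflecting the probability that the $\ell$ "extra" dummy points interact with the biased tuple or with each other — i.e. an overlap/collision cost of order $\ell^2/n$ times $\lambda$; (iii) bound $\mathbb{E}[|Z|] = \mathbb{E}[|S^s - S - 1|]$ by controlling $\mathrm{Var}(S)$ via \eqref{eq: meanZ} and a covariance estimate, or more directly by counting the expected number of $\ell$-tuples in the augmented process that contain at least one of the new points and share indices with the biased tuple; this is where the quantity $r$ enters, giving the term $2^\ell r/(\ell!\lambda)$. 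The factor $2^\ell$ presumably arises from summing over the $2^\ell - 1$ ways a new $\ell$-tuple can split between old and new points, or from a union bound over subsets; (iv) observe that since $S^s$ is built by \emph{adding} points, $S^s \geq S$ pointwise on the relevant event, so $Z = S^s - S - 1 \geq -1$ and in particular $Z_- \leq 1$ with $\{Z_- > 0\} \subseteq \{S^s = S\}$; this gives $X - Z_- = S - Z_-$, and on $\{S^s > S\}$ we have $Z_- = 0$ so $X - Z_- = S$, while $\mathbb{P}(X - Z_- \leq k) \leq \mathbb{P}(\tilde S \leq k)$ follows because $\tilde S$, the $U$-statistic over the smaller process $\beta_{n-2\ell}$, is stochastically dominated by $S$ restricted appropriately — this is the identification that makes the indicator terms in \eqref{rrrf2} and \eqref{bnd-gen-on-v} collapse to $\mathbb{P}(\tilde S \leq k)$ and $\mathbb{P}(\tilde S \leq v)$; (v) finally, since $Z \geq -1$ means $Z_-$ is bounded, plug everything into \eqref{fsxx2}, \eqref{wass-mainThm-2}, \eqref{rrrf2} with the choice $q_0$ absorbed into $\sum|q_i|$, and \eqref{bnd-gen-on-v}, and simplify; the $(1\wedge 1/\lambda^2)$ and $(1\wedge 1/\sqrt\lambda)$ prefactors on the $|q_i|$ sums get bounded crudely by constants times the stated expressions, and for $\lambda < v$ one falls back on \eqref{fsxx2} directly as the theorem statement of Theorem \ref{Thmmm2} already does.

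I expect the main obstacle to be step (i)–(ii): setting up the coupling so that it is \emph{exactly} realizable on one probability space with $\tilde S$ visible as a deterministic function of the shared randomness, and then pinning down the error sequence $q_i$ cleanly. The delicate point is that the classical $U$-statistic size-bias coupling adds $\ell$ points to $\beta_{n-\ell}$, but then $\tilde S$ (over $n-2\ell$ points) would not naturally appear; using $\beta_{n-2\ell}$ plus an independent block of $\ell$ "ordinary" points to reconstruct the law of $\beta_{n-\ell}$, and a further biased block of $\ell$ points for the size-biasing, is the trick, but one must check that the extra block is genuinely independent and that the combinatorial bookkeeping of which tuples are counted yields error terms of exactly the claimed order $\ell^2\lambda/n$. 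A secondary technical nuisance is controlling $\mathbb{E}[|Z|]$: bounding it requires a second-moment-type estimate on the number of newly-created $U$-statistic contributions, and organizing that sum by the overlap pattern between a generic $\ell$-tuple and the biased tuple is where the $\max_{1\le i\le \ell-1}$ defining $r$ and the combinatorial factor $2^\ell$ come from; care is needed to keep the descending-factorial counts $(n)_{2\ell - i}$ exactly matched to those in the definition of $r$.
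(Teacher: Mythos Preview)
Your plan contains two related misconceptions about where the pieces of the bound come from, and these lead to a genuine gap.

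\textbf{The coupling is exact, not approximate.} The standard construction you describe in your first sentence --- take $\beta_{n-\ell}$ and adjoin an $h$-biased $\ell$-tuple $\chi=(X_1',\dots,X_\ell')$ --- already gives an \emph{exact} size-bias coupling of $S$: if $S'=s(\beta_{n-\ell}+\chi)-h(X_1',\dots,X_\ell')$, then $k\,\mathbb{P}(S=k)=\lambda\,\mathbb{P}(S'=k-1)$ for all $k\in\mathbb{N}$. So Theorem~\ref{Thmmm} applies with $Z=S'-S$ and all $q_i=0$; there is no need for Theorem~\ref{Thmmm2}. Your steps (i)--(ii), which aim to produce and bound the $q_i$, would discover nothing: the ``discrepancy caused by using $\beta_{n-2\ell}$ plus $2\ell$ points'' is zero in law, since $\beta_{n-2\ell}$ plus $\ell$ fresh $K$-points has exactly the distribution of $\beta_{n-\ell}$. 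Consequently the term $2\ell^2\lambda/n$ does \emph{not} arise from $\sum_i|q_i|$. It comes instead from the gap between $\mathbb{E}[|Z|]$ and $\mathbb{E}[Z]$: by monotonicity of $s$,
\[
|Z|\le \big(s(\beta_{n-\ell}+\chi)-1-s(\beta_{n-\ell})\big)+\big(s(\beta_n)-s(\beta_{n-\ell})\big)= Z + 2\big(s(\beta_n)-s(\beta_{n-\ell})\big),
\]
and $\mathbb{E}[s(\beta_n)-s(\beta_{n-\ell})]\le \ell^2\lambda/n$; combined with $\mathbb{E}[Z]=(\mathrm{Var}(S)-\lambda)/\lambda\le 2^\ell r/(\ell!\lambda)$ this gives the stated bound on $\mathbb{E}[|Z|]$.

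\textbf{The claim $Z\ge -1$ is false.} In step (iv) you argue that $S^s\ge S$ because ``$S^s$ is built by adding points''. But $S^s=S'+1$ is computed on $\beta_{n-\ell}+\chi$, not on a superset of $\beta_n$: one has \emph{removed} $\ell$ points from $\beta_n$ before adding $\chi$, and the removed points may contribute many $h$-configurations. So $Z$ can be arbitrarily negative, and your route to the indicator bounds collapses. The paper instead observes that both $S$ and $S'$ dominate $s(\beta_{n-\ell})$, so $\mathbf{1}\{S-Z_-\le k\}\le \mathbf{1}\{s(\beta_{n-\ell})\le k\}$. The passage to $\tilde S=s(\beta_{n-2\ell})$ is then a \emph{decorrelation} step: one writes the upper bound on $|Z|$ as a sum of $U$-statistics in $\beta_{n-\ell}$ and uses that for each such sum
\[
\mathbb{E}\!\Big[\mathbf{1}\{s(\beta_{n-\ell})\le k\}\sum_{(x_1,\dots,x_u)\in\beta_{n-\ell,\neq}^u}\!\!\!\!\! g(x_1,\dots,x_u)\Big]\le \mathbb{P}(s(\beta_{n-2\ell})\le k)\,\mathbb{E}\!\sum_{(x_1,\dots,x_u)\in\beta_{n-\ell,\neq}^u}\!\!\!\!\! g(x_1,\dots,x_u),
\]
which is why $\beta_{n-2\ell}$ --- and hence $\tilde S$ --- enters, not through the coupling construction itself.
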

The bound on the Wasserstein distance in \eqref{fsxx1132-} slightly improves that in \cite[Theorem 7.1]{MR3502603} since it has a better order in $\lambda$. The bound for the total variation distance was also derived in \cite[Proposition 2]{MR3585403} by rewriting \cite[Theorem 2]{MR790624}. By means of \eqref{bin-bnd-0-}, one can study for some measurable symmetric function $g:\mathbb{X}^\ell \to \R$ the maximum (minumum) of $g(p)$ over all $p\in\beta_{n,\neq}^\ell$, which is called $U$-max-statistic ($U$-min-statistic). This is possible because  for any $u\in\R$, the probability that  $\max_{p\in\beta_{n,\neq}^\ell} g(p)$ is less than $u$ can be written as the probability that $\sum_{p\in\beta_{n,\neq}^\ell}\mathbf{1}\{g(p)\geq u\}$ equals $0$.  Limit theorems for $U$-max-statistics were considered in \cite{MR2466550}, yet without providing approximation results with respect to any distance; see also \cite{MR2394205}.  In contrast to these works,  \eqref{bin-bnd-0-} may lead to approximation results in the Kolmogorov distance; see Theorem \ref{Thm:min interp dist} in Subsection \ref{d12dx} and the discussion below it.    To the best of our  knowledge,  the last two inequalities presented in Theorem \ref{U-stat-fin-bin} have no analogue in the  literature.

From now on assume that $n\geq\ell$. Let $\chi$ be a point process of $\ell$ random points $X_1',\hdots, X_\ell'$ in $\mathbb{X}$ that are independent of $\beta_n$ and distributed such that
$$
\mathbb{P}( (X_1',\hdots,X_\ell') \in A ) = \frac{(n)_\ell}{\ell! \lambda} \int_{\mathbb{X}^\ell} \mathbf{1}\{ (x_1,\hdots,x_\ell)\in A \} h(x_1,\hdots,x_\ell) dK^\ell(x_1,\hdots,x_\ell)
$$
for all $A$ from the product $\sigma$-field $\mathcal{X}^\ell$.
Now we define
$$
S'= - h(X_1',\hdots,X_\ell') + \frac{1}{\ell!} \sum_{(x_{1},...,x_{\ell})\in(\beta_{n-\ell}\cup \chi)_{\neq}^{\ell}} h(x_{1},...,x_{\ell}) .
$$

\begin{proposition}\label{prop2}
	For all $n\geq \ell$ and $k\in\mathbb{N}$,
	$$
	k\mathbb{P}(S=k) = \lambda \mathbb{P}( S' =k-1).
	$$
\end{proposition}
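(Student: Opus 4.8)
The plan is to recognize that the asserted identity says precisely that $S'+1$ is a size-bias coupling of $S$ in the sense of \eqref{eq:SizeBiasCoupling}: since any size-bias coupling $S^{s}$ of $S$ satisfies $\mathbb{P}(S^{s}=k)=k\mathbb{P}(S=k)/\lambda$ for $k\in\N$ (take $f=\mathbf{1}\{\cdot=k\}$ in \eqref{eq:SizeBiasCoupling}), the claim is equivalent to $\mathbb{P}(S'+1=k)=k\mathbb{P}(S=k)/\lambda$, and I would prove this by a direct conditioning argument rather than by quoting a general size-bias construction.

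First, represent $\beta_{n}$ by $n$ i.i.d.\ points $Y_{1},\dots,Y_{n}$ with distribution $K$. By the symmetry of $h$,
\[
S=\sum_{J\subseteq\{1,\dots,n\},\ |J|=\ell}h(Y_{J}),
\]
where $Y_{J}$ denotes the tuple of the $Y_{j}$, $j\in J$, in any order, and each summand lies in $\{0,1\}$. Hence, for $k\in\N$,
\[
k\,\mathbb{P}(S=k)=\mathbb{E}\big[S\,\mathbf{1}\{S=k\}\big]
=\sum_{|J|=\ell}\mathbb{E}\big[h(Y_{J})\,\mathbf{1}\{S=k\}\big]
=\sum_{|J|=\ell}\mathbb{P}\big(h(Y_{J})=1\big)\,\mathbb{P}\big(S=k\mid h(Y_{J})=1\big),
\]
using that $h$ is $\{0,1\}$-valued and that $\mathbb{P}(h(Y_{J})=1)=\int h\,\dint K^\ell=\ell!\lambda/(n)_{\ell}>0$ for every such $J$.

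The key step is to identify the conditional law of the configuration given $\{h(Y_{J})=1\}$. Since $(Y_{j})_{j\in J}$ and $(Y_{i})_{i\notin J}$ are independent, conditioning on $\{h(Y_{J})=1\}$ leaves $(Y_{i})_{i\notin J}$ i.i.d.\ with distribution $K$ and independent of $(Y_{j})_{j\in J}$, while the conditional law of $(Y_{j})_{j\in J}$ becomes $A\mapsto\big(\int h\,\dint K^\ell\big)^{-1}\int\mathbf{1}_{A}\,h\,\dint K^\ell=\tfrac{(n)_{\ell}}{\ell!\lambda}\int\mathbf{1}_{A}\,h\,\dint K^\ell$, i.e.\ exactly the law of $(X_{1}',\dots,X_{\ell}')$. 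Thus, conditionally on $\{h(Y_{J})=1\}$, the points $(Y_{1},\dots,Y_{n})$ are distributed as the points of $\beta_{n-\ell}\cup\chi$, with $(Y_{i})_{i\notin J}$ playing the role of $\beta_{n-\ell}$ and $(Y_{j})_{j\in J}$ that of $\chi$; moreover on $\{h(Y_{J})=1\}$ one has $S=1+\sum_{J'\neq J}h(Y_{J'})$, and $\sum_{J'\neq J}h(Y_{J'})$ evaluated on the configuration $\beta_{n-\ell}\cup\chi$ equals the full $U$-statistic over $\beta_{n-\ell}\cup\chi$ minus the single term $h(X_{1}',\dots,X_{\ell}')$, i.e.\ it equals $S'$. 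Consequently $\mathbb{P}(S=k\mid h(Y_{J})=1)=\mathbb{P}(S'=k-1)$ for every $J$, and substituting this together with $\sum_{|J|=\ell}\mathbb{P}(h(Y_{J})=1)=\mathbb{E}[S]=\lambda$ gives $k\,\mathbb{P}(S=k)=\lambda\,\mathbb{P}(S'=k-1)$.

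The only genuinely delicate point is the identification of the conditional distribution of the point configuration given that a fixed $\ell$-subset satisfies $h=1$ with the distribution of $\beta_{n-\ell}\cup\chi$, together with the bookkeeping that exactly one term disappears when passing from the $U$-statistic over $\beta_{n-\ell}\cup\chi$ to $\sum_{J'\neq J}h(Y_{J'})$; the remaining steps, including the passage between the labeled representation of $\beta_n$ and the tuple definition of $S$, are routine.
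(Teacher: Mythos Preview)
Your proof is correct and follows essentially the same approach as the paper's. Both start from $k\,\mathbb{P}(S=k)=\mathbb{E}[S\,\mathbf{1}\{S=k\}]$, expand $S$ as a sum, and identify the resulting terms with $\lambda\,\mathbb{P}(S'=k-1)$; the only difference is presentational: the paper invokes the integral formula $\mathbb{E}\sum_{(x_1,\dots,x_\ell)\in\beta_{n,\neq}^\ell}g(x_1,\dots,x_\ell,\beta_n)=(n)_\ell\int_{\mathbb{X}^\ell}\mathbb{E}[g(x_1,\dots,x_\ell,\beta_{n-\ell}+\sum_i\delta_{x_i})]\,dK^\ell$ directly, whereas you unpack that formula by labeling the points and conditioning on $\{h(Y_J)=1\}$.
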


\begin{proof}
We have that 
\begin{align*}
	k\mathbb{P}(S=k) = \mathbb{E}[k \mathbf{1}\{S=k\} ] = \frac{1}{\ell!}\mathbb{E} \sum_{(x_1,\hdots,x_\ell)\in\beta_{n,\neq}^\ell} h(x_1,\hdots,x_\ell) \mathbf{1}\{S=k\}.  
\end{align*}
Using the fact that for any measurable map $g:\mathbb{X}^u\times\mathbf{N}_{\mathbb{X}}\to[0,\infty)$ with $u\in\N$,
$$
\mathbb{E} \sum_{(x_1,\hdots,x_u)\in\beta_{n,\neq}^u} g(x_1,\hdots,x_u,\beta_n) = (n)_u \int_{\mathbb{X}^u} \mathbb{E}[g(x_1,\hdots,x_u,\beta_{n-u}+\sum_{i=1}^u \delta_{x_i})] dK^u(x_1,\hdots,x_u),
$$
we obtain
\begin{align*}
k\mathbb{P}(S=k) & = \frac{(n)_\ell}{\ell!} \int_{\mathbb{X}^\ell} h(x_1,\hdots,x_\ell) \mathbb{P}\bigg( \frac{1}{\ell!} \sum_{(y_1,\hdots,y_\ell)\in (\beta_{n-\ell}\cup\{x_1,\hdots,x_\ell\})^\ell_{\neq}} h(y_1,\hdots,y_\ell)=k \bigg) \\
& \qquad \qquad \times dK^\ell(x_1,\hdots,x_\ell) \\
& = \lambda \mathbb{P}(S'+h(X_1',\hdots,X_\ell')=k) = \lambda \mathbb{P}(S'=k-1),
\end{align*}
where we used $h(X_1',\hdots,X_\ell')=1$ in the last step. This concludes the proof.
\end{proof}

\begin{proof}[Proof of Theorem \ref{U-stat-fin-bin}]
Suppose $n\geq 2\ell$. Our goal is to apply Theorem \ref{Thmmm} with $Z=S'-S$, which satisfies the assumption \eqref{Mek} by Proposition \ref{prop2}. We define $s:\mathbf{N}_{\mathbb{X}}\to \mathbb{R}$ by
$$
s(\nu)=\frac{1}{\ell!} \sum_{(x_1,\hdots,x_\ell)\in \nu^\ell_{\neq}} h(x_1,\hdots,x_\ell)
$$
so that $S=s(\beta_{n})$ and $S'=s(\beta_{n-\ell}+\chi)-h(X_1',\hdots,X_\ell')$. By the monotonicity of $s$, we have
\begin{equation}\label{eqn:DecompositionBinomial}
\begin{split}
|Z| = |S'-S| & = |s(\beta_{n-\ell}+\chi)-h(X_1',\hdots,X_\ell')-s(\beta_{n-\ell}) - (s(\beta_n) - s(\beta_{n-\ell}))| \\
& \leq (s(\beta_{n-\ell}+\chi)-h(X_1',\hdots,X_\ell')-s(\beta_{n-\ell})) + s(\beta_n) - s(\beta_{n-\ell}) .
\end{split}
\end{equation}
Together with
\begin{equation}\label{eqn:DecompositionBinomial2}
\begin{split}
& s(\beta_{n-\ell}+\chi)-h(X_1',\hdots,X_\ell')-s(\beta_{n-\ell}) + s(\beta_n) - s(\beta_{n-\ell}) \\
& = s(\beta_{n-\ell}+\chi)-h(X_1',\hdots,X_\ell')-s(\beta_{n}) + 2(s(\beta_n) - s(\beta_{n-\ell})) = Z + 2 (s(\beta_n) - s(\beta_{n-\ell}))
\end{split}
\end{equation}
this implies
\begin{align*}
\mathbb{E}[|Z|] 
\leq 
\mathbb{E}[Z] + 2 \mathbb{E}[s(\beta_n) - s(\beta_{n-\ell})].
\end{align*}
From \eqref{eq: meanZ} in Remark \ref{remark1} we know that
$$
\mathbb{E}[Z] = \frac{1}{\lambda} (\mathrm{Var}(S) - \lambda)= \frac{1}{\lambda} (\mathbb{E}[S^2] - \lambda^2 - \lambda).
$$
Thus, it follows from \cite[Lemma 6.1]{MR3502603} and the definition of $r$ that
$$
\mathbb{E}[Z] \leq \frac{2^\ell r}{\ell! \lambda}.
$$
A straightforward computation shows that 
\begin{align*}
\mathbb{E}[s(\beta_n) - s(\beta_{n-\ell})] & = \bigg(1 - \frac{(n-\ell)_{\ell}}{(n)_\ell} \bigg) \lambda = \frac{(n)_\ell - (n-\ell)_\ell}{(n)_\ell} \lambda 
 \leq \frac{\ell^2 (n-1)_{\ell-1}}{(n)_\ell} \lambda = \frac{\ell^2 \lambda}{n}.
\end{align*}
Combining the previous estimates yields
$$
\mathbb{E}[|Z|] \leq  \frac{2^\ell r}{\ell! \lambda} +  \frac{2\ell^2 \lambda}{n}
$$
so that \eqref{fsxx1132-} follows from \eqref{fsxx}.

Let $k\in\N$ be fixed. Note that $S\geq s(\beta_{n-\ell})$ and $S'\geq s(\beta_{n-\ell})$. If $Z\geq 0$, this implies
$$
\mathbf{1}\{S-Z_{-}\leq k \} = \mathbf{1}\{S\leq k \} \leq \mathbf{1}\{s(\beta_{n-\ell})\leq k \}.
$$
For $Z\leq 0$ we obtain
$$
\mathbf{1}\{S-Z_{-}\leq k \} = \mathbf{1}\{S+Z\leq k \} = \mathbf{1}\{S'\leq k \} \leq \mathbf{1}\{s(\beta_{n-\ell})\leq k \}.
$$
Combing the two cases leads to
$$
\mathbf{1}\{S-Z_{-} = k \} \leq \mathbf{1}\{S-Z_{-}\leq k \} \leq \mathbf{1}\{s(\beta_{n-\ell})\leq k \}.
$$
Together with \eqref{eqn:DecompositionBinomial} we obtain
\begin{equation}\label{eqn:|Z|Ind}
\begin{split}
& \mathbb{E}[ |Z| \mathbf{1}\{S-Z_{-}=k\}] \\
& \leq \mathbb{E}[ \mathbf{1}\{s(\beta_{n-\ell})\leq k \} ( s(\beta_{n-\ell}+\chi)-h(X_1',\hdots,X_\ell')-s(\beta_{n-\ell}) + s(\beta_n) - s(\beta_{n-\ell}) ) ].
\end{split}
\end{equation}
For $u\in\{1,\hdots,\ell-1\}$ and $g:\mathbb{X}^u\to[0,\infty)$, we have
\begin{equation}\label{eqn:CorrelationUstatisticsBinomial}
\begin{split}
& \mathbb{E} [\mathbf{1}\{s(\beta_{n-\ell})\leq k \} \sum_{(x_1,\hdots,x_u)\in\beta_{n-\ell,\neq}^u} g(x_1,\hdots,x_u) ]\\
& = (n-\ell)_u \int_{\mathbb{X}^u} \mathbb{P}(s(\beta_{n-\ell-u}+\sum_{i=1}^u \delta_{x_i})\leq k) g(x_1,\hdots,x_u) dK^u(x_1,\hdots,x_u) \\
& \leq \mathbb{P}(s(\beta_{n-2\ell})\leq k) (n-\ell)_u \int_{\mathbb{X}^u} g(x_1,\hdots,x_u) dK^u(x_1,\hdots,x_u) \\
& = \mathbb{P}(s(\beta_{n-2\ell})\leq k) \mathbb{E} \sum_{(x_1,\hdots,x_u)\in\beta_{n-\ell,\neq}^u} g(x_1,\hdots,x_u),
\end{split}
\end{equation}
where the inequality follows from the monotonicity of $s$. Because of 
$$
s(\beta_{n-\ell}+\chi)-h(X_1',\hdots,X_\ell')-s(\beta_{n-\ell}) = \sum_{u=1}^{\ell-1} \sum_{(x_1,\hdots,x_u)\in\beta_{n-\ell,\neq}^u} \tilde{h}_u(x_1,\hdots,x_u; \chi)
$$
and
$$
s(\beta_n) - s(\beta_{n-\ell}) =  \sum_{u=1}^{\ell-1} \sum_{(x_1,\hdots,x_u)\in\beta_{n-\ell,\neq}^u} \overline{h}_u(x_1,\hdots,x_u; \beta_n\setminus\beta_{n-\ell})
$$ 
with suitable functions $\tilde{h}_u$ and $\overline{h}_u$, $u\in\{1,\hdots,\ell-1\}$, we can rewrite the second factor on the right-hand side of \eqref{eqn:|Z|Ind} as sum of $U$-statistics with respect to $\beta_{n-\ell}$. Now an application of  \eqref{eqn:CorrelationUstatisticsBinomial} and \eqref{eqn:DecompositionBinomial2} yield
\begin{align*}
& \mathbb{E}[ |Z| \mathbf{1}\{S-Z_{-}=k\}] \\
& \leq  \mathbb{P}(s(\beta_{n-2\ell})\leq k) \mathbb{E}[ s(\beta_{n-\ell} + \chi)-h(X_1',\hdots,X_\ell')-s(\beta_{n-\ell}) + s(\beta_n) - s(\beta_{n-\ell})  ] \\
& = \mathbb{P}(s(\beta_{n-2\ell})\leq k) \big( \mathbb{E}[Z] + 2 \mathbb{E}[s(\beta_n) - s(\beta_{n-\ell})] \big) .
\end{align*}
Bounding the second factor on the right-hand side as above leads to
$$
\mathbb{E}[ |Z| \mathbf{1}\{S-Z_{-}=k\}] \leq \mathbb{P}(s(\beta_{n-2\ell})\leq k)  \bigg( \frac{2^\ell r}{\ell! \lambda} +  \frac{2\ell^2 \lambda}{n} \bigg).
$$
Thus, \eqref{bin-bnd-0-} and \eqref{bin-bnd-0-II} are immediate consequences of \eqref{rrrf} and \eqref{eq:boundOnv}.
\end{proof}

\subsection{$U$-statistics of Poisson processes}\label{U-stt-P-prf}
In this subsection, we study the Poisson approximation of $U$-statistics, where one sums over all $\ell$-tuples of distinct points of a Poisson process instead of those of a binomial point process as in the previous subsection. In this case, the summation can run over infinitely many $\ell$-tuples.
As the results for $U$-statistics with binomial input in Subsection \ref{U-stt-B-prf}, the theory developed herein permits to study extreme value problems arising in stochastic geometry. For example, in the next subsection, we employ our main result for $U$-statistics with Poisson input to investigate the limiting behavior of the minimum inter-point distance between the points of a Poisson process in $\mathbb{R}^d$. 

Let $(\mathbb X,\mathcal{X})$ be a measurable space and let $\eta$ be a Poisson process with a $\sigma$-finite intensity measure $L$ on $\mathbb{X}$. For a fixed $\ell\in\N$ and a symmetric measurable function $h:\mathbb{X}^\ell\to\{0,1\}$ that is integrable with respect to $L^\ell$ we consider the $U$-statistic 
$$
S=\frac{1}{\ell!} \sum_{(x_{1},...,x_{\ell}) \in \eta_{\neq}^{\ell}} h(x_{1}, \ldots, x_{\ell}),
$$
where $\eta_{\neq}^{\ell}$ denotes the set of all $\ell$-tuples of distinct points of $\eta$. It follows from the multivariate Mecke formula that
$$
\lambda:=\mathbb{E}[S]=\frac{1}{\ell!} \int_{\mathbb{X}^\ell} h(x_1,\hdots,x_\ell) dL^\ell(x_1,\hdots,x_\ell).
$$
We define
$$
r= \underset{1\leq i \leq \ell -1}{\operatorname{max}} \int_{\mathbb X^i} \bigg( \int_{\mathbb X^{\ell-i}} h(x_1,\dots , x_\ell)  d L^{\ell-i}(x_{i+1},\dots , x_\ell) \bigg)^2 d L^i(x_1,\dots,x_i)
$$
for $\ell\geq 2$, and put $r=0$ for $\ell=1$. The expression $r$ is used to quantify the accuracy of the Poisson approximation for $S$ and it is the analogue of $r$ given in Subsection \ref{U-stt-B-prf} for binomial $U$-statistics.  

\begin{theorem}\label{U-stat-fin}
	 Let $S$, $\lambda>0$ and $r$ be as above. Then,
	\begin{align} \label{eq: wassU-sta-P}  
		d_{TV}(S, P_{\lambda}) 
		\leq \bigg(1 \wedge \frac{1}{\lambda} \bigg) \frac{2^{\ell} r}{\ell!}
\quad \text{and} \quad
		d_W(S, P_{\lambda}) 
		& \leq \bigg(1 \wedge \frac{1.1437 }{\sqrt {\lambda}}\bigg)\frac{2^{\ell} r}{\ell!}.
	\end{align}
	Moreover, for all $m\in\mathbb{N}$,
	\begin{equation}\begin{split}\label{rrrftq1}
			0 
			 \leq \mathbb{P}(S=0) - e^{-\lambda} 
			 \leq \Bigg[\sum_{k=0}^{m-1} \bigg( \frac{1}{k+1} \wedge \frac{k!}{\lambda^{k+1}} \bigg)  \mathbb{P}\big( S \leq k \big)  + \frac{m!}{\lambda^{m+1}} \Bigg] \frac{2^{\ell}r}{\ell!}  
	\end{split}\end{equation}
	and for all $v\in\N$, 
	\begin{align}\label{eqn: cumdistrbPoiss-U}
		\vert \mathbb{P}(S\leq v) - \mathbb{P}(P_{\lambda} \leq v) \vert 
		\leq \Bigg[\frac{(v+1)^2}{\lambda} + \mathbb{P} ( S \leq  v)\Bigg]\frac{2^{\ell}  r}{\ell! \lambda} .  
	\end{align}
\end{theorem}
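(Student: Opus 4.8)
My plan is to build an exact size-bias coupling for $S$ from the multivariate Mecke formula and then feed it into Theorem \ref{Thmmm} and Corollary \ref{prpbound1}. Write $s(\nu)=\frac{1}{\ell!}\sum_{(x_1,\dots,x_\ell)\in\nu_{\neq}^\ell}h(x_1,\dots,x_\ell)$, so $S=s(\eta)$, and (modifying $h$ on the diagonals of $\mathbb{X}^\ell$, which does not change $S$) let $\chi=(X_1',\dots,X_\ell')$ be an $\ell$-tuple independent of $\eta$ with $\mathbb{P}((X_1',\dots,X_\ell')\in A)=\frac{1}{\ell!\lambda}\int_{\mathbb{X}^\ell}\mathbf{1}\{(x_1,\dots,x_\ell)\in A\}\,h(x_1,\dots,x_\ell)\,dL^\ell$. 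Put $S'=s(\eta+\chi)-h(X_1',\dots,X_\ell')$ and $Z=S'-S$. Applying the multivariate Mecke formula to $k\mathbb{P}(S=k)=\frac{1}{\ell!}\mathbb{E}\sum_{(x_1,\dots,x_\ell)\in\eta_{\neq}^\ell}h(x_1,\dots,x_\ell)\mathbf{1}\{S=k\}$ and using that $h\in\{0,1\}$ forces $h(X_1',\dots,X_\ell')=1$ almost surely, one obtains $k\mathbb{P}(S=k)=\lambda\mathbb{P}(S'=k-1)$ for all $k\in\N$ exactly as in Proposition \ref{prop2}, i.e.\ \eqref{Mek} holds with this $Z$ and $\lambda=\mathbb{E}[S]$. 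Since the points of $\chi$ are pairwise distinct and, by independence, distinct from those of $\eta$, monotonicity of $s$ gives $Z=s(\eta+\chi)-s(\eta)-1\ge0$, so $Z\in\N_0$; Corollary \ref{prpbound1}a) then immediately yields the left-hand inequality $0\le\mathbb{P}(S=0)-e^{-\lambda}$ in \eqref{rrrftq1}.

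Next I would estimate $\mathbb{E}[|Z|]=\mathbb{E}[Z]$. By Remark \ref{remark1}(ii) this equals $\lambda^{-1}(\mathrm{Var}(S)-\lambda)$, and the standard formula for the variance of a Poisson $U$-statistic (from its Wiener--Itô chaos expansion, or via a direct Mecke computation) gives $\mathrm{Var}(S)=\sum_{i=1}^{\ell}\frac{1}{i!\,(\ell-i)!^2}\int_{\mathbb{X}^i}\big(\int_{\mathbb{X}^{\ell-i}}h\,dL^{\ell-i}\big)^2dL^i$. Because $h^2=h$, the $i=\ell$ term equals $\frac{1}{\ell!}\int_{\mathbb{X}^\ell}h\,dL^\ell=\lambda$, hence $\mathrm{Var}(S)-\lambda=\sum_{i=1}^{\ell-1}\frac{1}{i!\,(\ell-i)!^2}\int_{\mathbb{X}^i}\big(\int_{\mathbb{X}^{\ell-i}}h\,dL^{\ell-i}\big)^2dL^i\le r\sum_{i=1}^{\ell-1}\frac{1}{i!\,(\ell-i)!^2}\le\frac{r}{\ell!}\sum_{i=1}^{\ell-1}\binom{\ell}{i}\le\frac{2^\ell r}{\ell!}$, using $\frac{1}{i!\,(\ell-i)!^2}\le\frac{1}{\ell!}\binom{\ell}{i}$ and the definition of $r$. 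Thus $\mathbb{E}[|Z|]\le\frac{2^\ell r}{\ell!\lambda}$, and inserting this into \eqref{fsxx} and simplifying the $\lambda$-factors (e.g.\ $(1\wedge\lambda)\frac{2^\ell r}{\ell!\lambda}=(1\wedge\frac1\lambda)\frac{2^\ell r}{\ell!}$) gives \eqref{eq: wassU-sta-P}.

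For the two estimates on the distribution functions I would first note that $Z_-=0$, so $\{S-Z_-=k\}=\{S=k\}$ and $\{S-Z_-\le v\}=\{S\le v\}$; therefore, by \eqref{rrrf} and \eqref{eq:boundOnv}, it suffices to show $\mathbb{E}[Z\,\mathbf{1}\{S\le k\}]\le\mathbb{P}(S\le k)\,\frac{2^\ell r}{\ell!\lambda}$ for every $k\in\N_0$. Decompose $Z=s(\eta+\chi)-s(\eta)-h(X_1',\dots,X_\ell')=\sum_{u=1}^{\ell-1}\sum_{(x_1,\dots,x_u)\in\eta_{\neq}^u}\tilde h_u(x_1,\dots,x_u;\chi)$ with nonnegative $\tilde h_u$, where $\tilde h_u$ collects the $\ell$-tuples built from $u$ points of $\eta$ and $\ell-u$ points of $\chi$. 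Conditioning on $\chi$ and applying the multivariate Mecke formula to each inner sum turns $\mathbb{E}\big[\sum_{(x_1,\dots,x_u)\in\eta_{\neq}^u}\tilde h_u(x_1,\dots,x_u;\chi)\mathbf{1}\{s(\eta)\le k\}\,\big|\,\chi\big]$ into $\int_{\mathbb{X}^u}\mathbb{P}\big(s(\eta+\sum_{j=1}^u\delta_{x_j})\le k\big)\,\tilde h_u(x_1,\dots,x_u;\chi)\,dL^u$; monotonicity of $s$ gives $\mathbf{1}\{s(\eta+\sum_{j=1}^u\delta_{x_j})\le k\}\le\mathbf{1}\{s(\eta)\le k\}$, so this is at most $\mathbb{P}(S\le k)\,\mathbb{E}\big[\sum_{(x_1,\dots,x_u)\in\eta_{\neq}^u}\tilde h_u(x_1,\dots,x_u;\chi)\,\big|\,\chi\big]$. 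Summing over $u$ and integrating over $\chi$ yields $\mathbb{E}[Z\,\mathbf{1}\{S\le k\}]\le\mathbb{P}(S\le k)\,\mathbb{E}[Z]\le\mathbb{P}(S\le k)\,\frac{2^\ell r}{\ell!\lambda}$. (Alternatively, conditionally on $\chi$ the functional $Z$ is nondecreasing and $\mathbf{1}\{S\le k\}$ nonincreasing in $\eta$, so the Harris inequality for Poisson processes gives the same bound.) Plugging this into \eqref{rrrf} and \eqref{eq:boundOnv} produces \eqref{rrrftq1} and \eqref{eqn: cumdistrbPoiss-U}.

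The main obstacle is the bookkeeping in the last step: representing $s(\eta+\chi)-s(\eta)-1$ as a $\chi$-indexed family of nonnegative $U$-statistics over $\eta$ and keeping the Mecke applications rigorous with a merely $\sigma$-finite intensity measure, after having excluded the diagonals by modifying $h$ on an $L^\ell$-null set. The variance identity and the combinatorial estimate producing the constant $2^\ell$ are routine once the coupling is set up, and the passage to Theorem \ref{Thmmm} and Corollary \ref{prpbound1} is then immediate; note also that, in contrast to the binomial case, no ``$n\ge2\ell$''-type restriction and no correction term of order $\ell^2\lambda/n$ arise here because adding points to a Poisson process does not alter the law of the remaining points.
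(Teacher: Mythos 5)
Your construction and argument are correct and follow the same overall route as the paper: build the exact size-bias coupling $S'$ via the multivariate Mecke formula, set $Z=S'-S\ge0$, and feed the bounds on $\mathbb{E}[Z]$ and $\mathbb{E}[Z\,\mathbf{1}\{S\le k\}]$ into Theorem \ref{Thmmm} and Corollary \ref{prpbound1}.  The one place where your proof genuinely diverges from the paper's is the correlation inequality $\mathbb{E}[Z\,\mathbf{1}\{S\le k\}]\le \mathbb{E}[Z]\,\mathbb{P}(S\le k)$: the paper invokes the Harris/FKG inequality for Poisson processes directly (citing \cite[Theorem~20.4]{MR3791470}), observing that $Z$ is, conditionally on $\chi$, a nondecreasing functional of $\eta$ and $\mathbf{1}\{S\le k\}$ a nonincreasing one; your primary argument instead decomposes $Z$ into a $\chi$-indexed family of $U$-statistics over $\eta$ and applies Mecke plus monotonicity to each piece, which is exactly the technique the paper uses for the binomial case (Theorem \ref{U-stat-fin-bin}) rather than for the Poisson case.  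Both routes are valid; the Harris argument is shorter, while your explicit decomposition is more self-contained and avoids citing an external correlation inequality.  Two very small remarks: (i) your aside about modifying $h$ on the diagonals is unnecessary — the multivariate Mecke formula and the size-biased law of $\chi$ already work for general $\sigma$-finite $L$ without any diagonal excision — and (ii) you treat the case $\ell=1$ only implicitly (via $r=0\Rightarrow Z=0$), whereas the paper dispatches it separately; your argument does cover it, but it is worth saying so explicitly since the sum defining your $Z$-decomposition is then empty.
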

The result for the total variation distance in \eqref{eq: wassU-sta-P} was shown in \cite[Proposition 1]{MR3585403}, which improved \cite[Proposition 4.1]{MR2971726}, and in \cite[Section 8]{MR3780386}. The bound for the Wasserstein distance in \eqref{eq: wassU-sta-P} was also derived in \cite[Section 8]{MR3780386} and has a slightly better order in $\lambda$ than that in \cite[Theorem 7.1]{MR3502603}. To the best of our  knowledge,  the other inequalities presented in Theorem \ref{U-stat-fin} have no analogues in the  literature. 

\begin{proof}[Proof of Theorem \ref{U-stat-fin}]
We follow a similar approach as in the proof of Theorem \ref{U-stat-fin-bin}. 
For $\ell =1,$ Theorem \ref{U-stat-fin} is a direct consequence of \cite[Theorem 5.1]{MR3791470}, whence we assume $\ell \geq 2$ from now on. 

Let $\chi$ be a point process of $\ell$ random points $X_1',\hdots,X_\ell'$ that are independent of $\eta$ and distributed according to
$$
\mathbb{P}((X_1',\hdots,X_\ell')\in A) = \frac{1}{\ell! \lambda} \int_{\mathbb{X}^\ell} \mathbf{1}\{(x_1,\dots,x_\ell)\in A \} h(x_{1},\dots,x_{\ell}) d L^\ell(x_1,\dots,x_\ell)
$$
for $A\in\mathcal{X}^\ell$. We define
$$
S'=-h(X_1',\hdots,X_\ell') + \frac{1}{\ell!} \sum_{(x_1,\hdots,x_\ell)\in(\eta\cup\chi)^k_{\neq}} h(x_1,\hdots,x_\ell).
$$
For $k\in\N$ the multivariate Mecke formula implies that
\begin{align*}
& k \mathbb{P}(S=k)  = \mathbb{E}[ S \mathbf{1}\{S=k\} ] \\
& = \frac{1}{\ell!} \mathbb{E} \sum_{(x_1,\hdots,x_\ell)\in\eta^\ell_{\neq}} h(x_1,\hdots,x_{\ell}) \mathbf{1}\bigg\{ \frac{1}{\ell!} \sum_{(y_1,\hdots,y_\ell)\in\eta^\ell_{\neq}} h(y_1,\hdots,y_{\ell})=k  \bigg\} \\
& = \frac{1}{\ell!} \int_{\mathbb{X}^\ell} h(x_1,\hdots,x_\ell) \mathbb{P}\bigg( \frac{1}{\ell!} \sum_{(y_1,\hdots,y_\ell)\in(\eta \cup \{x_1,\dots,x_\ell\})^\ell_{\neq}} h(y_1,\hdots,y_{\ell}) = k \bigg) dL^\ell(x_1,\hdots,x_\ell) \\
& = \lambda \mathbb{P}(S'+h(X_1',\hdots,X_\ell')=k) = \lambda \mathbb{P}(S'=k-1),
\end{align*}
where we used $h(X_1',\hdots,X_\ell')=1$ in the last step.
Thus, we see that $S$ satisfies  the hypothesis of Theorem \ref{Thmmm} with $Z=S'-S\geq 0$.

Next we compute the expressions on the right-hand sides of the bounds in Theorem \ref{Thmmm}.
Let $k\in\N$ be fixed. Define $s(\nu)=\frac{1}{\ell!} \sum_{(x_1,\hdots,x_\ell)\in\nu^{\ell}_{\neq}} h(x_1,\hdots,x_\ell)$ for $\nu\in\mathbf{N}_{\mathbb{X}}$ and note that $S=s(\eta)$. Since
	$$
s(\nu+\chi+\delta_x)-s(\nu+\delta_x) \geq s(\nu+\chi)-s(\nu) \quad \text{and} \quad	\mathbf{1}\{s(\nu +\delta_x )\leq k\}\leq  \mathbf{1}\{s(\nu)\leq k\}
	$$
	for all $\nu\in\mathbf{N}_{\mathbb{X}}$ and $x\in\mathbb{X}$, by \cite[Theorem 20.4]{MR3791470} we obtain
	$$
	\mathbb{E}\big[ Z \mathbf{1}\{S\leq k\}\big]\leq \mathbb{E}[ Z]\mathbb{P}(S\leq k).
	$$
	Together with $Z\geq 0$, we have
	\begin{equation}\label{pcaxx}
		\mathbb{E}[ |Z|  \mathbf{1}\{S-Z_{-}=k\}] = \mathbb{E}[ Z  \mathbf{1}\{S=k\}]
		\leq \mathbb{E}[ Z \mathbf{1}\{S\leq k\}]  
		\leq \mathbb{E}[Z] \mathbb{P}(S\leq k).
	\end{equation}	
	Furthermore, from Remark \ref{remark1}-$(ii)$ it follows that 
	\begin{align}\label{eq: P-U-ident-meanZ}
	\mathbb{E}[Z]  
	= \frac{1}{\lambda}\big\{ \mathrm{Var}(S) - \lambda\big\} = \frac{1}{\lambda} \big\{ \mathbb{E}[S^2] - \lambda^2 - \lambda \big\}.
\end{align}
	Then, from $Z\geq 0$ and \cite[Lemma 6.1]{MR3502603} we deduce
	\begin{align}\label{eq: P-U-stat-bnd-Z}
		\mathbb{E}[|Z|] = \mathbb{E}[Z] \leq  \sum_{i=1}^{\ell-1} \frac{1}{\ell!\lambda} \binom{\ell}{i} r
		\leq \frac{2^{\ell}}{\ell! \lambda } r.
	\end{align}
Finally, combining this bound with \eqref{fsxx} shows \eqref{eq: wassU-sta-P}, while \eqref{rrrf} and \eqref{eq:boundOnv} together with \eqref{pcaxx} and \eqref{eq: P-U-stat-bnd-Z} lead to \eqref{rrrftq1}, where the first inequality is a consequence of Corollary \ref{prpbound1} a), and \eqref{eqn: cumdistrbPoiss-U}.
\end{proof}

\subsection{The  distances between the points of a Poisson  process}\label{d12dx}
We consider random points in $\R^d$ distributed according to a Poisson process. For any pair of these points with the midpoint in a bounded measurable set $W\subset\R^d$, we take a transform of the Euclidean distance, and we study the Poisson approximation for the number of times that these quantities belong to a certain range of values. More importantly, we consider the exponential approximation for a transform of the minimal distance between pairs of points with midpoint in $W$.

For convenience, we assume $\lambda_d(W)=1$; nonetheless, the following arguments are valid for every $W$ with  a positive and finite volume.  Let $\eta_{t}$ be a Poisson process on $\mathbb{R}^d$ with intensity measure $t\lambda_d$, $t>0$, where we denote by $\lambda_d$ the $d$-dimensional Lebesgue measure. Define
\begin{align*}
	\xi_t 
	& =\frac{1}{2}\sum_{(x,y)\in\eta_{t,\neq}^{2}}\mathbf{1}\Big\{\frac{x+y}{2}\in W\Big\}\delta_{2^{-1}t^2k_d\Vert x-y\Vert^d  }, \,\,\, \quad t>0,
	\\
	Y_t 
	& =  \min_{(x,y)\in \eta_{t,\neq}^{2}: \frac{x+y}{2}\in W}  2^{-1} t^2 k_d \Vert x-y \Vert^d , \quad t>0,
\end{align*}
where $\delta_z$ stands for the Dirac measure at $z\in\R$ and $\Vert \cdot \Vert$ denotes the Euclidean norm.

\begin{theorem}\label{Thm:min interp dist}
	Let $\xi_t$ and $Y_t$ be as above for $t>0$. Let $\gamma$ be a Poisson process on $[0,\infty)$ with the restriction of the Lebesgue measure to $[0,\infty)$ as intensity measure. Then for all $u\geq 0$ and all measurable $B\subset [0,u]$, 
	\begin{align}\label{dq:TV-conv_intrpt}
		&  d_{TV}(\xi_t(B),\gamma(B)) 
			\leq  (1 \wedge u ) \frac{8u}{t}
	\end{align}
	and
\begin{align}\label{applsei-eq2}
		0 
		\leq\mathbb{P}(Y_t>u) - e^{-u}
		\leq \frac{81}{t}.
\end{align} 

\end{theorem}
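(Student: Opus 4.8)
The plan is to apply Theorem~\ref{U-stat-fin}, the Poisson approximation for $U$-statistics with Poisson input, with $\ell=2$, underlying space $\mathbb X=\mathbb R^d$, intensity measure $L=t\lambda_d$, and the symmetric kernel $h(x,y)=\mathbf 1\{(x+y)/2\in W,\ 2^{-1}t^2k_d\|x-y\|^d\in B\}$ for a fixed measurable $B\subset[0,u]$. With this choice the associated $U$-statistic is exactly $S=\xi_t(B)$. First I would compute $\lambda=\mathbb E[\xi_t(B)]$ via the Mecke formula: the substitution $z=(x+y)/2$, $w=x-y$ turns the double integral into $\frac{t^2}{2}\int_W\int_{\mathbb R^d}\mathbf 1\{2^{-1}t^2k_d\|w\|^d\in B\}\,dw\,dz$, and after the polar-coordinate change $s=2^{-1}t^2k_d\|w\|^d$ one finds $\lambda=\lambda_1(B)$, independent of $t$; in particular $\lambda\le u$ since $B\subset[0,u]$. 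This also shows $\gamma(B)$ is Poisson with mean $\lambda_1(B)$, so $\gamma(B)\stackrel d= P_\lambda$ and $d_{TV}(\xi_t(B),\gamma(B))=d_{TV}(S,P_\lambda)$.

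Next I would bound the quantity $r$ from Theorem~\ref{U-stat-fin}. With $\ell=2$ the only relevant index is $i=1$, so $r=\int_{\mathbb R^d}\big(\int_{\mathbb R^d}h(x_1,x_2)\,t\,dx_2\big)^2 t\,dx_1$. The inner integral over $x_2$, for fixed $x_1$, is $t$ times the volume of an annular region in $w=x_1-x_2$ whose radii are controlled by $B\subset[0,u]$; a direct estimate gives inner integral $\le 2\lambda_1(B)\le 2u$ uniformly in $x_1$ (the factor arises because for fixed $x_1$ the midpoint constraint and the $x_2$-integral contribute a volume that is at most twice the "unconstrained" one, matching the $\frac12$ in the definition of $\xi_t$). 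One subtlety: the outer integral $\int t\,dx_1$ ranges only over those $x_1$ for which the inner integral is nonzero, and this set has volume $O(1)$ uniformly (it is essentially $W$ dilated by the maximal interpoint distance, which shrinks like $t^{-2/d}$), so $r\le \frac{C}{t}\cdot u$ for a modest constant; tracking constants carefully should yield $\frac{2^\ell r}{\ell!\lambda}=\frac{2r}{\lambda}\le \frac{8u}{t}$. Plugging into the total variation bound $d_{TV}(S,P_\lambda)\le(1\wedge\frac1\lambda)\frac{2^\ell r}{\ell!}=(1\wedge\lambda)\frac{2^\ell r}{\ell!\lambda}$ and using $\lambda\le u$ gives~\eqref{dq:TV-conv_intrpt}.

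For~\eqref{applsei-eq2} the key observation is the identity $\mathbb P(Y_t>u)=\mathbb P(\xi_t([0,u])=0)=\mathbb P(S=0)$ with $B=[0,u]$, and $e^{-u}=\mathbb P(P_u=0)$ where now $\lambda=\lambda_1([0,u])=u$. The lower bound $\mathbb P(S=0)\ge e^{-u}$ is immediate from Theorem~\ref{U-stat-fin} (equivalently Corollary~\ref{prpbound1}a), since $Z=S'-S\ge0$). For the upper bound I would use~\eqref{rrrftq1} with $\lambda=u$ and optimize over $m\in\mathbb N$: the bracketed factor is $\sum_{k=0}^{m-1}\big(\frac1{k+1}\wedge\frac{k!}{u^{k+1}}\big)\mathbb P(S\le k)+\frac{m!}{u^{m+1}}$, and it must be bounded by an absolute constant uniformly in $u>0$, after which multiplication by $\frac{2^2 r}{2!}=2r\le\frac{C}{t}$ (the same $r$-estimate as above with $B=[0,u]$, giving $2r\le 16u/t$ wait—more precisely $\frac{2^\ell r}{\ell!}=2r$ and $2r/\lambda\le 8u/t$ so $2r\le 8u^2/t$) yields the stated $81/t$. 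The main obstacle is exactly this uniform-in-$u$ estimate of the bracket: for small $u$ one takes $m=1$ and uses $\frac{1}{u}\wedge 1$-type terms together with $\mathbb P(S\le k)\le1$ and the crude bound $\frac{m!}{u^{m+1}}$ is bad—so instead for small $u$ one bounds everything by the trivial $d_{TV}$ estimate $(1\wedge u)8u/t\le 8u/t$, which already gives something small; for $u\ge1$ one chooses $m\approx u$ (using $\mathbb P(S\le k)\le \mathbb P(\mathrm{Poisson}(u)\le k)+d_{TV}$ or a direct Markov/Chebyshev bound on $S$) so that $\frac{m!}{u^{m+1}}$ and the truncated sum are each $O(1/u)\cdot$(something), and the factor $\frac{1}{\lambda}=\frac1u$ hidden in $\frac{2^\ell r}{\ell!\lambda}$ compensates the possibly large bracket. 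Balancing these two regimes and bookkeeping the numerical constants to land at $81$ is the real work; conceptually the proof is just "specialize Theorem~\ref{U-stat-fin}, compute $\lambda$ and $r$, and control the $m$-dependent prefactor uniformly."
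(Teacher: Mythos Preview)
Your strategy for \eqref{dq:TV-conv_intrpt} is right, but the $r$-computation is garbled: the inner integral $t\int_{\mathbb R^d} h(x_1,x_2)\,dx_2$ is at most $2u/t$, not $2u$ (after the midpoint substitution $z=(x_1+x_2)/2$ you integrate over a ball of volume $2u/t^2$, times intensity $t$). With your stated bound and a support of volume $O(1)$ for the outer integral you would get $r=O(tu^2)$, which is the wrong direction. The paper avoids estimating the support altogether: it changes variables $z=(x+y)/2$ directly in the squared integral defining $r$, bounds one copy of the inner $W$-integral by dropping the constraint $z\in W$, and obtains $r\le 4u^2/t$ cleanly; then $\frac{2^\ell r}{\ell!}=2r\le 8u^2/t$ and \eqref{eq: wassU-sta-P} gives \eqref{dq:TV-conv_intrpt}.

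For \eqref{applsei-eq2} your plan to optimize over $m$ and take $m\approx u$ is not needed and, as written, is not yet a proof. The paper simply takes $m=1$ in \eqref{rrrftq1} for every $u$, which yields
\[
0\le\mathbb P(Y_t>u)-e^{-u}\le\Big[\tfrac{1}{u}\,\mathbb P(\xi_t([0,u])=0)+\tfrac{1}{u^{2}}\Big]\cdot\frac{8u^2}{t}.
\]
The missing ingredient is a bound on $\mathbb P(\xi_t([0,u])=0)$ itself: from the $r$-estimate one has $\mathrm{Var}(\xi_t([0,u]))\le u+2r\le u+8u^2/t$, so Chebyshev gives $\mathbb P(\xi_t([0,u])=0)\le\mathrm{Var}/u^2\le 1/u+8/t$. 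Substituting back produces $16/t+64u/t^2$, which is at most $80/t$ on $u\in[0,t]$. For $u>t$ one does not try to control the bracket at all; instead monotonicity gives $\mathbb P(Y_t>u)\le\mathbb P(Y_t>t)\le 80/t+e^{-t}\le 81/t$ and $e^{-u}\le 1/t$. The two concrete steps you are missing are (i) feeding the Chebyshev bound on $\mathbb P(S=0)$ back into the $m=1$ bracket, and (ii) truncating at $u=t$ and using monotonicity beyond it, rather than attempting to make the bracketed factor $O(1)$ uniformly in $u$.
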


The minimal distance between the points of a Poisson process was also considered in \cite{MR3252817,MR3502603,MR2971726,MR3585403}, sometimes formulated as minimal edge length of the random geometric graph or the minimal inradius of a Poisson-Voronoi tessellation. The important achievement of Theorem \ref{Thm:min interp dist} is that a rate of convergence for the Kolmogorov distance is provided in \eqref{applsei-eq2}.  So far it was only possible to prove bounds on the difference between $\mathbb P( Y_t>u)$ and $e^{-u}$ that depend on $u>0$ (see e.g.\ \cite[Theorem 2.4]{MR2971726} or \cite[Corollary 3]{MR3585403}).

In the works mentioned above all pairs of points are considered such that one or both points belong to $W$. Our approach, where we only require that the midpoint of the points is in $W$, can be extended to this different way of counting, but one might get additional terms in the bounds since $\mathbb{E}[\xi_t([0,u])]$ is not necessarily $u$ due to boundary effects.

In \cite{MR2971726,MR3585403}, beside Poisson approximation results for the number of interpoint distances below a given threshold it was shown that the point process of rescaled interpoint distances converges weakly to a Poisson process.  By \eqref{dq:TV-conv_intrpt} and \cite[Theorem 16.16]{MR1876169}, we can also deduce that $\xi_t$ converges weakly to $\gamma$ as $t\to\infty$.

The related problem of small distances between the points of a binomial point process was first studied in \cite{MR511059}. Because of the similarity to   Theorem \ref{U-stat-fin}, we believe that by applying Theorem \ref{U-stat-fin-bin} it is possible to prove a similar result to Theorem \ref{Thm:min interp dist} for an underlying binomial point process.

By using in the proof of Theorem \ref{Thm:min interp dist} the corresponding bound of Theorem \ref{U-stat-fin} for the Wasserstein distance, one can obtain the counterpart of \eqref{dq:TV-conv_intrpt} for the Wasserstein distance with a different power in $u$ and the same rate of convergence in $t$.

\begin{proof}[Proof of Theorem \ref{Thm:min interp dist}]
First, we show that the intensity measure of the point process $\xi_t$ is the restriction of the Lebesgue measure to $[0,\infty)$.
	Let $v_t=\big(\frac{2u}{ k_d t^2}\big)^{1/d}$. The change of variable $z=\frac{x+y}{2}$ yields
	\begin{align}
		\mathbb{E}[\xi_t([0,u])]
		& =\frac{t^2}{2}\int_{\mathbb{R}^d}\int_{\mathbb{R}^d}\mathbf{1}\Big\{\frac{x+y}{2}\in W\Big\}\mathbf{1}\{\Vert x-y\Vert \leq v_t\} \,dydx
		\notag
		\\
		& =2^{d-1} t^2\int_{\mathbb{R}^d}\int_{\mathbb{R}^d}\mathbf{1}\big\{z\in W\big\}\mathbf{1}\{2\Vert x-z\Vert \leq v_t\} \,dz dx
		\notag
		\\
		& =2^{d-1} t^2\int_{W}\int_{\mathbb{R}^d}\mathbf{1}\{2\Vert x-z\Vert \leq v_t\} \, dx dz
		= u.
		\notag
	\end{align}
	For   $B\subset [0,u]$  with $u>0$	define 
	$$
		r_t(B)=  t  \int_{\R^d} \bigg( t \int_{\R^d} \mathbf{1}\Big\{\frac{x+y}{2}\in W\Big\}\mathbf{1}\{2^{-1} t^2 k_d \Vert x-y \Vert^d\in B\}\,    d y \bigg)^2 d x.
	$$
Again from the change of variable  $z=\frac{x+y}{2},$  it follows that  
	\begin{align*}
		r_t(B)
	&\leq r_t([0,u])
	 = 2^{2d} t^{3} \int_{\R^d} \bigg(  \int_{W} \mathbf{1}\{2 \Vert x - z \Vert \leq v_t\}   \, d z \bigg)^2 d x 
	 \\
	 & \leq 2^{2d} t^{3} \int_{\R^d} \bigg( \int_{\R^d} \mathbf{1}\{2 \Vert x - z \Vert \leq v_t\} \, d z \int_{W}  \mathbf{1}\{2 \Vert x - \tilde{z}\Vert \leq v_t\}\, d \tilde{z}  \bigg) d x
	 \\
	 & =2^{d+1} u t \int_{\R^d} \int_{W}  \mathbf{1}\{2 \Vert x - \tilde{z}\Vert \leq v_t\} \,  d \tilde{z} dx \\
	 &= 2^{d+1} u t \int_{W} \int_{\R^d}  \mathbf{1}\{2 \Vert x - \tilde{z}\Vert \leq v_t\} \, d x d \tilde{z} = \frac{4u^2}{t} .
	\end{align*}
	Therefore \eqref{eq: wassU-sta-P} in Theorem \ref{U-stat-fin} with $h(x,y)=\mathbf{1}\big\{\frac{x+y}{2}\in W\big\}\mathbf{1}\{2^{-1}t^2k_d\Vert x-y\Vert^d  \in B\}$ yields for $B\subset [0, u]$ that
	\begin{align*}
		d_{TV}(\xi_t(B),\gamma (B))  &\leq \bigg(1 \wedge \frac{1}{u} \bigg) 2 r_t(B) \leq (1 \wedge  u ) \frac{ 8 u }{t} .
	\end{align*}

From \eqref{eq: P-U-ident-meanZ} and \eqref{eq: P-U-stat-bnd-Z} in the proof of Theorem \ref{U-stat-fin} with $S=\xi_t([0,u]),$ $r=r_t([0,u])$ and $h$ as above, we know that
$$
\mathrm{Var}(\xi_t([0,u])) \leq \mathbb{E}[\xi_t([0,u])] + 2r_t([0,u]) = u+\frac{8u^2}{t}.
$$
Thus it follows from the Chebyshev inequality that
$$
\mathbb{P}(\xi_t([0,u])=0) \leq \mathbb{P}(|\xi_t([0,u]) - u|\geq u) \leq \frac{\mathrm{Var}(\xi_t([0,u]))}{u^2} = \frac{1}{u}+\frac{8}{t}.
$$
Together with \eqref{rrrftq1} in Theorem  \ref{U-stat-fin} with $m=1$ and straightforward arguments, this leads to
	\begin{align}
		0 
		\leq \mathbb{P}(\xi_t([0,u])=0) - e^{-u} 
		& = \mathbb{P}(Y_t>u) - e^{-u} 
	\leq \bigg[\frac{1}{u} \mathbb{P} (\xi_t([0,u]) =0 ) + \frac{1}{ u^{2}} \bigg] \frac{8u^2}{t}
	\label{eq: min-dist-CD-lowerbnd}
	\\
	& \leq  \bigg( \frac{1}{u^2} + \frac{8}{ut} + \frac{1}{u^2} \bigg) \frac{8u^2}{t} = \frac{16}{t} + \frac{64 u}{t^2}
	\notag
	\end{align}
so that
$$
\sup_{u\in [0, t]} | \mathbb{P}(Y_t> u) - e^{-u} | \leq \frac{80}{t}.
$$
Thus, we have
$$
\mathbb{P}(Y_t> t) \leq \frac{80}{t} + e^{-t} \leq \frac{81}{t} 
$$
and
$$
\sup_{u\in [0, \infty)} | \mathbb{P}(Y_t> u) - e^{-u} | \leq \max\big\{ \sup_{u\in [0, t]} | \mathbb{P}(Y_t> u) - e^{-u} |, \mathbb{P}(Y_t> t), e^{-t} \big\} \leq \frac{81}{t},
$$
which combined with the left-hand side of \eqref{eq: min-dist-CD-lowerbnd} completes the proof.
\end{proof}

\subsection{Long head runs}\label{succe}
 Consider $n$ independent and identically distributed Bernoulli random variables. A $k$-head run is defined as an uninterrupted sequence of $k$ successes, where $k$ is a positive integer. For example, for $k = 1$,  one simply studies the successes, while for $k = 2$, one considers the occurrence of two  consecutive successes in a row. Several authors have investigated the number of $k$-head runs in a sequence of Bernoulli random variables; see e.g.\ the book \cite{MR1882476}. In this subsection, we discuss the Poisson approximation of the number of non-overlapping $k$-runs among $n$ i.i.d.\  Bernoulli random variables, denoted by $S_{n,k}$.   In particular, we obtain an explicit  bound  on the pointwise difference between the cumulative distribution functions of $S_{n,k}$ and $P_{\mathbb{E}[S_{n,k}]}$ that is independent from the number $k$ of required successes in a row.

Let $k\in\N$ and  $X_j$, $j\in\mathbb{N}_0,$ be a sequence of independent and Bernoulli distributed random variables with parameter $0< p\leq 1/2$. We denote by $I^{(i)}$ with $i\in\N_0$ the random variable
\begin{align*}
I^{(i)}=\mathbf{1}\{X_{i-1}=0, X_i=1,\dots,X_{i+k-1}=1\},
\end{align*}
where $X_{-1}=0$. For $k\leq n$ the number $S_{n,k}$ of non-overlapping $k$-runs in $X_0,\dots ,X_{n-1}$ is given by
\begin{align}\label{S}
S_{n,k}
=\sum_{i=0}^{n-k} I^{(i)}.
\end{align}

\begin{theorem}\label{Thm-S}
Let  $S_{n,k}$  be the random variable given by \eqref{S} with $k,n\in\N, k\leq n$. Then, 
\begin{equation}\label{TV-S}\begin{split}
 d_{TV}\big(S_{n,k},P_{\mathbb E [S_{n,k}]}\big) 
\leq (2k+1)\big( 1 \wedge \mathbb E [S_{n,k}]\big) p^k.
\end{split}\end{equation}
Moreover, for  $v\in\N_0$ and $n\geq 2$,
\begin{equation}\label{eq:sup-k-heads-run}\begin{split}
\max_{1\leq k \leq n} \vert  \mathbb{P}(S_{n,k}\leq v) - \mathbb{P}(P_{\mathbb{E}[S_{n,k}]}\leq v) \vert 
\leq 40(v+2)^2 \frac{\log n}{n} .
\end{split}\end{equation}
\end{theorem}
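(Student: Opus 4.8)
The plan is to derive both parts of Theorem~\ref{Thm-S} from Theorem~\ref{Thmmm} applied to $X=S_{n,k}$, using an explicit size-bias coupling tailored to the representation~\eqref{S}. Set $\lambda=\mathbb{E}[S_{n,k}]=p^{k}\bigl(1+(n-k)(1-p)\bigr)$. I would take a random index $J\in\{0,\dots,n-k\}$, independent of $(X_{j})_{j}$, with $\mathbb{P}(J=i)=\mathbb{E}[I^{(i)}]/\lambda$, and form a new sequence by setting the entry at position $J-1$ equal to $0$ and the entries at positions $J,\dots,J+k-1$ equal to $1$; letting $S^{s}$ be the number of non-overlapping $k$-runs in this modified sequence, one checks that, conditionally on $\{J=i\}$, the modified sequence has the law of $(X_{j})_{j}$ given $\{I^{(i)}=1\}$, so $S^{s}$ is a size-bias coupling of $S_{n,k}$ and $Z:=S^{s}-S_{n,k}-1$ satisfies~\eqref{Mek}. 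The key combinatorial observation is that installing one full run of $k$ ones preceded by a $0$ destroys every neighbouring candidate run: the modified indicator $\widehat{I}^{(j)}$ vanishes for $J+1\le j\le J+k$ (its preceding symbol has been forced to $1$) and for $J-k\le j\le J-1$ (its defining window contains the forced $0$ at position $J-1$), while $\widehat{I}^{(j)}=I^{(j)}$ for $|j-J|>k$. Hence $Z=-M_{J}$ with $M_{J}:=\sum_{j:\,|j-J|\le k,\ 0\le j\le n-k}I^{(j)}$; moreover, since $I^{(i)}I^{(j)}=0$ whenever $1\le|i-j|\le k$, at most two of any $2k+1$ consecutive $I^{(j)}$ can equal $1$, so $0\le M_{J}\le 2$ and in particular $Z\le 0$.

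I would then record two moment estimates. Because $J$ is independent of $(X_{j})_{j}$ and $\mathbb{E}[I^{(i)}]\le p^{k}$ with $\sum_{j}\mathbb{E}[I^{(j)}]=\lambda$, one gets $\mathbb{E}[|Z|]=\mathbb{E}[M_{J}]=\lambda^{-1}\sum_{j}\mathbb{E}[I^{(j)}]\sum_{i:\,|i-j|\le k}\mathbb{E}[I^{(i)}]\le(2k+1)p^{k}$, hence $\mathbb{E}[|Z|]/\lambda\le(2k+1)p^{k}/\lambda=(2k+1)/(1+(n-k)(1-p))$; and, for every realisation of $(X_{j})_{j}$, $\mathbb{E}\bigl[M_{J}\mid(X_{j})_{j}\bigr]=\sum_{j}I^{(j)}\,\mathbb{P}(|j-J|\le k)\le\tfrac{(2k+1)p^{k}}{\lambda}S_{n,k}$. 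Estimate~\eqref{TV-S} is then immediate from the first bound in~\eqref{fsxx}: $d_{TV}(S_{n,k},P_{\lambda})\le(1\wedge\lambda)\mathbb{E}[|Z|]\le(2k+1)(1\wedge\mathbb{E}[S_{n,k}])p^{k}$.

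For~\eqref{eq:sup-k-heads-run} I would fix $v$ and split on the size of $k$; if $n$ is below an absolute constant the right-hand side already exceeds $1$. If $k>c\log n$ for a suitable absolute constant $c$ (e.g.\ $c=4$), then $\lambda\le p^{k}n\le 2^{-k}n\le 1/n$, whence $|\mathbb{P}(S_{n,k}\le v)-\mathbb{P}(P_{\lambda}\le v)|=|\mathbb{P}(S_{n,k}>v)-\mathbb{P}(P_{\lambda}>v)|\le\max\{\mathbb{P}(S_{n,k}\ge 1),\mathbb{P}(P_{\lambda}\ge 1)\}\le\lambda\le 1/n$, which is below $40(v+2)^{2}\log n/n$. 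If $k\le c\log n$, I would apply~\eqref{eq:boundOnv} (for $v\ge 1$; for $v=0$ one uses~\eqref{rrrf} with $m=1$ instead, the argument being identical) with $Z=-M_{J}$, so that $Z_{-}=M_{J}$ and $X-Z_{-}=S_{n,k}-M_{J}$. The first term is at most $\tfrac{(v+1)^{2}}{\lambda}\mathbb{E}[|Z|]\le\tfrac{(v+1)^{2}(2k+1)p^{k}}{\lambda}$, and for the second term $M_{J}\le 2$ gives $\{S_{n,k}-M_{J}\le v\}\subseteq\{S_{n,k}\le v+2\}$, so, conditioning on $(X_{j})_{j}$,
\begin{align*}
\mathbb{E}\bigl[M_{J}\mathbf{1}\{S_{n,k}-M_{J}\le v\}\bigr]
&\le\mathbb{E}\bigl[\mathbb{E}[M_{J}\mid(X_{j})_{j}]\,\mathbf{1}\{S_{n,k}\le v+2\}\bigr]\\
&\le\tfrac{(2k+1)p^{k}}{\lambda}\,\mathbb{E}\bigl[S_{n,k}\mathbf{1}\{S_{n,k}\le v+2\}\bigr]\le\tfrac{(2k+1)p^{k}(v+2)}{\lambda}.
\end{align*}
Adding the two contributions gives $|\mathbb{P}(S_{n,k}\le v)-\mathbb{P}(P_{\lambda}\le v)|\le\tfrac{(2k+1)p^{k}}{\lambda}(v+2)^{2}=\tfrac{(2k+1)(v+2)^{2}}{1+(n-k)(1-p)}\le\tfrac{4(2k+1)(v+2)^{2}}{n}\le 40(v+2)^{2}\tfrac{\log n}{n}$, where I used $1+(n-k)(1-p)\ge n/4$ (valid since $p\le 1/2$ and $k\le c\log n\le n/2$ for large $n$) and $4(2k+1)\le 40\log n$. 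Taking the maximum over $k$ finishes the proof. The delicate steps are the identity $Z=-M_{J}$ — the combinatorial fact that one inserted run annihilates all nearby candidate runs — and, for~\eqref{eq:sup-k-heads-run}, handling the second summand of~\eqref{eq:boundOnv}: it is essential that $J$ is independent of $(X_{j})_{j}$, so that $\mathbb{E}[M_{J}\mid(X_{j})_{j}]$ is a deterministic multiple of $S_{n,k}$ and the crude bound $\mathbb{E}[S_{n,k}\mathbf{1}\{S_{n,k}\le v+2\}]\le v+2$ is enough; separating off $k\gtrsim\log n$ is what keeps the factor $2k+1$ of order $\log n$ rather than its square.
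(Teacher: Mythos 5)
Your proposal is correct and takes essentially the same route as the paper: you build the same coupling (your $Z=-M_J$ is exactly the paper's $-U_Y$ from Proposition~\ref{size-bias-S-prop}, only derived by installing a run and checking the combinatorics of $\widehat{I}^{(j)}$ rather than by the paper's direct verification of~\eqref{Mek}), you use the same moment bounds $\mathbb{E}[|Z|]\le(2k+1)p^k$ and $\mathbb{E}[|Z|\mathbf{1}\{X-Z_-\le v\}]\le(2k+1)p^k(v+2)/\lambda$, and you split on $k$ large versus $k$ small in the same spirit (paper uses $2\log n$, you use $4\log n$ with a marginally different treatment of the large-$k$ tail via $\lambda\le 1/n$ instead of invoking~\eqref{TV-S}; both are fine).
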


The bound \eqref{TV-S} was shown in \cite[Corollary 15]{MR3992498} as a consequence of \cite[Theorem 1]{MR972770}. The Poisson approximation for $S_{n,k}$  is also investigated in e.g.\ \cite{MR1092983,MR1163825,MR1133732,MR1334894}. The explicit bound in \eqref{eq:sup-k-heads-run} on the pointwise difference between the cumulative distribution functions of $S_{n,k}$ and $P_{\mathbb{E}[S_{n,k}]}$ does not depend on the number $k$ of required successes in a row. Hence,  \eqref{eq:sup-k-heads-run} improves \cite[Corollary 3.23]{MR2933280} and \cite[Corollary 16]{MR3992498} because we found an explicit bound. Furthermore, since the proof of Theorem \ref{Thm-S} is based on Theorem \ref{Thmmm}, by applying the second inequality of \eqref{fsxx} in Theorem \ref{Thmmm},  it is possible to attain a bound on the  Wasserstein distance between $S_{n,k}$ and $P_{\mathbb E [S_{n,k}]}$.

For the proof of Theorem \ref{Thm-S} we define
$$
U_\ell  
= \sum_{i= 0\vee (\ell-k)}^{(n-k) \wedge (\ell+k)} I^{(i)}, \quad \ell=0,\dots, n-k,
$$
where $a\vee b = \max\{a,b\}$ for any $a,b\in\R$, and let $Y$ be a random variable  independent from $X_j, j\in\N_0,$ and with distribution given by 
$$
\mathbb{P}(Y=\ell) = \frac{\mathbb{E}[I^{(\ell)}]}{\mathbb E [S_{n,k}]}, \quad  \ell= 0,\dots, n-k.
$$
The next proposition is derived by a standard construction of size-bias couplings (see e.g.\ \cite[Corollary 3.24]{MR2861132}). 
\begin{proposition}\label{size-bias-S-prop}
Let $k,n\in\N$ with $k\leq n$. For any $m\in\N$,
\begin{align*}
m\mathbb{P}(S_{n,k}=m)
= \mathbb E [S_{n,k}]\mathbb{P} (S_{n,k}- U_{Y}=m-1 ) .
\end{align*}
\end{proposition}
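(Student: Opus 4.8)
The plan is to derive the identity directly from the standard size-biasing of the sum $S_{n,k}=\sum_{i=0}^{n-k}I^{(i)}$, reducing it to two elementary structural facts about head runs. Writing $\lambda=\mathbb{E}[S_{n,k}]=\sum_{i=0}^{n-k}\mathbb{E}[I^{(i)}]$, I would first rewrite both sides of the asserted identity. Since $S_{n,k}=\sum_\ell I^{(\ell)}$ pointwise and each $I^{(\ell)}$ is an indicator,
\[
m\,\mathbb{P}(S_{n,k}=m)=\mathbb{E}\Big[\sum_{\ell=0}^{n-k}I^{(\ell)}\mathbf{1}\{S_{n,k}=m\}\Big]=\sum_{\ell=0}^{n-k}\mathbb{P}\big(I^{(\ell)}=1,\ S_{n,k}=m\big),
\]
while, since $Y$ is independent of $(X_j)_{j\in\mathbb{N}_0}$ with $\mathbb{P}(Y=\ell)=\mathbb{E}[I^{(\ell)}]/\lambda$,
\[
\lambda\,\mathbb{P}(S_{n,k}-U_Y=m-1)=\sum_{\ell=0}^{n-k}\mathbb{E}[I^{(\ell)}]\,\mathbb{P}(S_{n,k}-U_\ell=m-1)=\sum_{\ell=0}^{n-k}\mathbb{P}(I^{(\ell)}=1)\,\mathbb{P}(S_{n,k}-U_\ell=m-1).
\]
So it will suffice to show, for each fixed $\ell\in\{0,\dots,n-k\}$, that $\mathbb{P}(I^{(\ell)}=1)\,\mathbb{P}(S_{n,k}-U_\ell=m-1)=\mathbb{P}(I^{(\ell)}=1,\ S_{n,k}=m)$.

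Two observations will settle this. First, $I^{(\ell)}$ and $S_{n,k}-U_\ell$ are independent: because $I^{(i)}$ is a function of $X_{i-1},\dots,X_{i+k-1}$, the difference $S_{n,k}-U_\ell=\sum_{i\,:\,0\le i\le n-k,\ |i-\ell|>k}I^{(i)}$ depends only on the $X_j$ with $j\le\ell-2$ or $j\ge\ell+k$, while $I^{(\ell)}$ depends only on $X_{\ell-1},\dots,X_{\ell+k-1}$, and the $X_j$ are independent; hence $\mathbb{P}(I^{(\ell)}=1)\,\mathbb{P}(S_{n,k}-U_\ell=m-1)=\mathbb{P}(I^{(\ell)}=1,\ S_{n,k}-U_\ell=m-1)$. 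Second, on $\{I^{(\ell)}=1\}$ one has $U_\ell=1$: if $I^{(i)}=I^{(i')}=1$ with $i<i'$, then $X_{i'-1}=0$ together with $X_i=\dots=X_{i+k-1}=1$ forces $i'-1>i+k-1$, i.e.\ $i'-i>k$; since every index $i\ne\ell$ in the sum $U_\ell=\sum_{i=0\vee(\ell-k)}^{(n-k)\wedge(\ell+k)}I^{(i)}$ satisfies $1\le|i-\ell|\le k$, conditionally on a run starting at $\ell$ all other summands of $U_\ell$ vanish. Consequently $\{I^{(\ell)}=1,\ S_{n,k}-U_\ell=m-1\}=\{I^{(\ell)}=1,\ S_{n,k}=m\}$, which combined with the first observation yields the required per-$\ell$ identity, and summing over $\ell$ finishes the proof.

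The step carrying the real content is the second observation: one must notice that the half-width $k$ of the window defining $U_\ell$ is large enough to catch every run that could conflict with a run starting at $\ell$, yet small enough that, given such a run is present, no second run fits inside the window — so the ``planted'' local count $U_Y$ equals exactly $1$. The rest is routine bookkeeping: matching the range $\{0,\dots,n-k\}$ of $Y$ with the summation limits of $U_\ell$, and handling the convention $X_{-1}=0$, which only makes $I^{(0)}$ depend on fewer coordinates and does not disturb the independence argument. Alternatively, the proposition follows from the general construction of size-bias couplings for sums of indicators (cf.\ \cite[Corollary 3.24]{MR2861132}) applied with the conditional law $\mathbb{P}(\,\cdot\mid I^{(\ell)}=1)$, but the direct computation above is shorter and self-contained.
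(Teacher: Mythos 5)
Your proof is correct and follows essentially the same route as the paper: decompose $m\,\mathbb{P}(S_{n,k}=m)=\sum_\ell \mathbb{E}[I^{(\ell)}\mathbf{1}\{S_{n,k}=m\}]$, use that $U_\ell=I^{(\ell)}$ on $\{I^{(\ell)}=1\}$ (so the non-overlapping window contains exactly one run) to replace $I^{(\ell)}$ by $U_\ell$, invoke independence of $I^{(\ell)}$ and $S_{n,k}-U_\ell$, and reassemble the sum as $\mathbb{E}[S_{n,k}]\,\mathbb{P}(S_{n,k}-U_Y=m-1)$. The only difference is that you spell out the two structural facts the paper cites as standard — the disjointness of the index sets underlying $I^{(\ell)}$ and $S_{n,k}-U_\ell$, and the gap-at-least-$k{+}1$ property of non-overlapping $k$-runs — which makes the argument self-contained but does not change the underlying strategy.
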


\begin{proof}
Let  $\ell\in \{0,\dots, n-k\}$ and $m\in\N$ be fixed. Then, we have
$$
\mathbb{E}[ I^{(\ell)} \mathbf{1}\{S_{n,k}- I^{(\ell)}=m-1\}] 
=\mathbb{E}[I^{(\ell)} \mathbf{1}\{S_{n,k}- U_{\ell}=m-1\}].
$$
Since $I^{(\ell)}$ and $S_{n,k} - U_\ell$ are independent, it follows  that
\begin{align*}
m\mathbb{P}(S_{n,k}=m)
& =  \sum_{\ell=0}^{n-k}\mathbb{E}[ I^{(\ell)} \mathbf{1}\{S_{n,k}=m\}]
= \sum_{\ell=0}^{n-k} \mathbb{E}[ I^{(\ell)} \mathbf{1}\{S_{n,k}- I^{(\ell)}=m-1\}] 
\\
&=\sum_{\ell=0}^{n-k} \mathbb{E}[I^{(\ell)}]\mathbb{P}(S_{n,k}- U_{\ell}=m-1) 
= \mathbb E [S_{n,k}] \mathbb{P}(S_{n,k}- U_{Y}=m-1) ,
\end{align*}
which concludes the proof.
\end{proof}
\begin{remark}
Since $U_Y\geq 0$, from Corollary \ref{prpbound1} b) it follows that $\mathbb{P}(S_{n,k}=0)\leq e^{-\mathbb{E}[S_{n,k}]}$. Thus, straightforward calculations imply that
$$
\mathbb{P}(S_{n,k}=0)\leq \exp\big(-(n-k+1)p^k(1-p)\big).
$$
\end{remark}
\begin{proof}[Proof of Theorem \ref{Thm-S}]
From \eqref{fsxx} in Theorem \ref{Thmmm} and Proposition \ref{size-bias-S-prop}, it follows that 
$$
 d_{TV}(S_{n,k},P_{\mathbb E [S_{n,k}]})
\leq  ( 1 \wedge \mathbb{E}[S_{n,k}]   )\mathbb{E}[ U_{Y}] 
 \leq (2k+1) ( 1 \wedge \mathbb{E}[S_{n,k}]   ) p^k,
$$
where we used $\mathbb{E}[U_{\ell}]\leq (2k+1)p^k$ for $\ell=0,\dots ,n-k$ in the last step. This proves   \eqref{TV-S}.

Let $n\geq 2$ be fixed. Since $(2k+1)p^k,k\geq 1$, is decreasing in $k$ for any $p\leq 1/2$,   by \eqref{TV-S} we deduce for $k\geq 2\log n $  that
\begin{align}\label{eq: bnd-over-k-large-k}
	\vert  \mathbb{P}(S_{n,k}\leq v) - \mathbb{P}(P_{\mathbb{E}[S_{n,k}]}\leq v) \vert  \leq (2 k +1 )p^k \leq (4\log n  +1 )2^{-2\log n} \leq \frac{4\log n +1 }{n}.
\end{align}
Let $k< 2\log n $. From \eqref{rrrf} in Theorem \ref{Thmmm} with $m=1$ for $v=0$ and \eqref{eq:boundOnv} in Theorem \ref{Thmmm} for $v\in\N$, it follows that
\begin{align}
	\vert  \mathbb{P}(S_{n,k}\leq v) - \mathbb{P}(P_{\mathbb{E}[S_{n,k}]}\leq v) \vert 
	& \leq \frac{(v+1)^2\mathbb{E}[ U_Y ] }{\mathbb{E}[S_{n,k}]} + \mathbb{E}[  U_Y \mathbf{1}\{ S_{n,k}-  U_Y \leq v  \} ] 
	 .
	\label{eq: intemidiat-bnd-on-less-than-v}
\end{align}
From $0 \leq U_{\ell} \leq 2$ for $\ell\in\{0,\hdots,n-k\}$ and the definition of $Y$ it follows that
\begin{align*}
	 \mathbb{E}[U_{Y} \mathbf{1}\{S_{n,k}-U_Y\leq v\}]  
	& \leq \mathbb{E}[U_{Y} \mathbf{1}\{S_{n,k} \leq v+2\}] = \mathbb{E} \sum_{\ell=0}^{n-k} \frac{\mathbb{E}[I^{(\ell)}]}{\mathbb{E}[S_{n,k}]} \mathbb{E}[U_{\ell} \mathbf{1}\{S_{n,k} \leq v+2\}] \\
	& \leq \frac{p^k}{\mathbb{E}[S_{n,k}]} \mathbb{E} \sum_{\ell=0}^{n-k} \sum_{i= 0\vee (\ell-k)}^{(n-k) \wedge (\ell+k)} I^{(i)} \mathbf{1}\{S_{n,k} \leq v+2\}.
\end{align*}
Thus, by the inequality
$$ 
\sum_{\ell=0}^{n-k} \sum_{i= 0\vee (\ell-k)}^{(n-k) \wedge (\ell+k)} a_i\leq (2k+1)\sum_{m=0}^{n-k} a_m,\quad   a_0,\dots,a_{n-k}\geq 0,
$$
 we obtain
\begin{align*}
   \mathbb{E}[U_{Y} \mathbf{1}\{S_{n,k}-U_Y\leq v\}]	
	&\leq  \frac{(2k+1) p^k}{\mathbb{E}[S_{n,k}]} \mathbb{E}[ S_{n,k} \mathbf{1}\{S_{n,k} \leq v+2\} ]\leq  \frac{(2k+1) p^k (v+2)}{\mathbb{E}[S_{n,k}]}. 
\end{align*}
Together with \eqref{eq: intemidiat-bnd-on-less-than-v} and the inequalities 
$$
\mathbb{E}[ S_{n,k} ]  \geq (n-k +1) p^k/2 \quad \text{ and } \quad \mathbb{E}\big[ U_Y \big] \leq (2k +1)p^k,
$$
this shows for $k<2\log n$ and $n>4 \log n $ that 
\begin{align*}
	\vert  \mathbb{P}(S_{n,k}\leq v) - \mathbb{P}(P_{\mathbb{E}[S_{n,k}]}\leq v) \vert 
		& \leq \frac{2(v+1)^2 (2k +1)}{ n-k +1  } + \frac{ 2(v+2)(2k+1)}{n-k+1}
		\\
		& \leq \frac{4 (v+2)^2( 4 \log n +1)}{ n - 2\log n  }\leq \frac{40(v+2)^2 \log n}{n},
\end{align*}
where we used the inequalities $4 \log n + 1 \leq 5\log n$ and  $ n - 2\log n  \geq n/2$ for $n>4\log n $ in the last step.  Combining  this and \eqref{eq: bnd-over-k-large-k} establishes \eqref{eq:sup-k-heads-run} for $n> 4\log n $. In conclusion, note that $n> 4\log n$ for $n>10$, and for $2\leq n\leq 10$, the right-hand side of \eqref{eq:sup-k-heads-run}  is greater than $1$. Thus, \eqref{eq:sup-k-heads-run} holds for all $n\geq 2$.
\end{proof}

\subsection{Minimal circumscribed radii of Poisson-Voronoi tessellations}\label{circ-rad-proof}
In this subsection, we consider  circumscribed radii of stationary Poisson-Voronoi tessellations.  The aim is to continue the work started in \cite{MR3252817} by proving that the Kolmogorov distance between a transform of the minimal circumscribed radius and a Weibull random variable converges to $0$ at a rate of $1/t^{1/(d+1)}$ when the intensity $t$ of the underlying Poisson process goes to infinity.

For any locally finite counting measure $\nu$ on $\R^d$, we denote by $N(x,\nu)$ the Voronoi cell with nucleus $x\in\R^d$ generated by $\nu + \delta_x$, that is
$$
N(x,\nu)
=\left\{y\in\mathbb{R}^d\,:\,\Vert y-x\Vert  \leq   \Vert  y-x'\Vert ,x\neq x'\in \nu\right\}.
$$
  Voronoi tessellations, i.e., tessellations consisting of Voronoi cells $N(x,\nu)$, $x\in\nu$, arise in different fields such as biology \cite{POUPON2004233}, astrophysics \cite{refId0} and communication networks \cite{MR2150719}. For more details on Poisson-Voronoi tessellations, i.e., Voronoi tessellations generated by an underlying Poisson  process, we refer the reader to e.g.\  \cite{MR2654678,MR1295245,MR2455326}. We denote by $\mathbf{B}(x,r)$ the open ball centered at $x\in \R^d$ with radius $r>0$. The circumscribed radius of the Voronoi cell $N(x,\nu)$ is defined as
\begin{align*}
	C(x,\nu)=\mathrm{inf}\left\{R\geq0\,:\,\mathbf{B}(x,R)\supset N(x,\nu)\right\},
\end{align*}
i.e., the circumscribed radius is the smallest radius for which the ball centered at the nucleus contains the cell.

Throughout this subsection we consider the stationary Poisson-Voronoi tessellation generated by a Poisson process $\eta_t$ on $\R^d$ with intensity measure $t\lambda_d$, $t > 0$, where $\lambda_d$ is the $d$-dimensional Lebesgue measure. Let $W\subset\mathbb{R}^d$ be a measurable set with $\lambda_d(W)=1$. For any Voronoi cell $N(x,\eta_t)$ with $x\in\eta_t\cap W,$ we take the circumscribed radius of the cell, and we define the point process  $\xi_t$ on the positive half line as 
\begin{align}\label{xi-circ-rad}
	\xi_t
	=\sum_{x\in\eta_t \cap W}\delta_{\alpha_2k_d t^{(d+2)/(d+1)}C(x,\eta_t)^d}.
\end{align}
Here $k_d$ denotes the volume of the $d$-dimensional unit ball, and the constant $\alpha_2>0$ is given by
\begin{equation}\label{alpha2}
\alpha_2
=\left(\frac{2^{d(d+1)}}{(d+1)!}p_{d+1}\right)^{1/(d+1)}
\end{equation}
with
\begin{equation}\label{pd+1}
p_{d+1} := \mathbb{P}\Big( N\Big(0,\sum_{j=1}^{d+1} \delta_{Y_j}\Big) \subseteq  \mathbf{B}(0,  1)\Big),
\end{equation}
where $Y_1, \dots , Y_{d+1}$ are independent and uniformly distributed random points in $\mathbf{B}(0, 2)$. We denote by $T_t$ the first arrival time of $\xi_t$, i.e.,
\begin{align}\label{eq:MinCirc}
	T_t 
	= \min_{x\in\eta_t \cap W} \alpha_2k_d t^{(d+2)/(d+1)}C(x,\eta_t)^d,
\end{align}
which is - up to a rescaling - the $d$-th power of the  minimal circumscribed radius of the cells with nucleus in $W$. Recall that a random variable $Y$ has a Weibull distribution if its cumulative distribution function is given by 
$
\mathbb{P}(Y \leq u)= 1 - e^{-(u/s)^k}
$
for $u\geq0$, and $0$ otherwise; $k>0$ is the shape parameter and $s>0$ is the scale parameter.

\begin{theorem}\label{last-thm}
Suppose $t\geq 1$. Let $\xi_t$ and $T_t$ be the point process and the random variable  given by \eqref{xi-circ-rad} and \eqref{eq:MinCirc}, respectively. Let $Y$ be a Weibull distributed random variable with shape parameter $d+1$ and scale parameter $1$. Then, there exist constants $C_{\mathrm{TV}},C_K>0$ only depending on $d$ such that
	\begin{equation}\label{lst_thm-pr-1}\begin{split}
		 	d_{TV}\big(\xi_t([0,u]),P_{u^{d+1}}\big) \leq  C_{\mathrm{TV}}  \frac{u^{d+2}}{t^{1/(d+1)}}
	\end{split}\end{equation}
	for  $u>0$, and
\begin{align}\label{òlstingqw}
		d_K(T_t , Y)
		& \leq  \frac{C_K}{t^{1/(d+1)}}.
	\end{align}
\end{theorem}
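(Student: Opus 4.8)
Since $\{T_t>u\}=\{\xi_t([0,u])=0\}$ for every $u\ge 0$ and a Weibull random variable $Y$ with shape $d+1$ and scale $1$ satisfies $\mathbb{P}(Y>u)=e^{-u^{d+1}}=\mathbb{P}(P_{u^{d+1}}=0)$, both parts of Theorem \ref{last-thm} reduce to a Poisson approximation of $X_u:=\xi_t([0,u])$ by $P_{u^{d+1}}$: \eqref{lst_thm-pr-1} is such an approximation in total variation, and, as $d_K(T_t,Y)=\sup_{u\ge 0}|\mathbb{P}(X_u=0)-e^{-u^{d+1}}|$, \eqref{òlstingqw} requires a bound on $|\mathbb{P}(X_u=0)-\mathbb{P}(P_{u^{d+1}}=0)|$ that is uniform in $u$. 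The plan is to apply Theorem \ref{Thmmm2} to $X_u$ with $\lambda=u^{d+1}$, using a coupling built from the multivariate Mecke formula that satisfies \eqref{Mek2} but is not a size-bias coupling, and then to control the resulting error terms through the geometry of Voronoi cells.

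The geometric heart of the argument is the following. For $x\in\R^d$, $\rho>0$ and a locally finite configuration $\nu$, one has $C(x,\nu)\le\rho$ if and only if each ball $\mathbf{B}(x+\rho v,\rho)$ with $v$ a unit vector contains a point of $\nu$; in particular $\{C(x,\nu)\le\rho\}$ depends only on $\nu\cap\mathbf{B}(x,2\rho)$. Writing $r=r(u,t)$ for the solution of $\alpha_2 k_d t^{(d+2)/(d+1)}r^d=u$ and $\beta=\beta(u,t)=t k_d (2r)^d=(2^d/\alpha_2)\,u\,t^{-1/(d+1)}$, conditioning $\eta_t$ on the number of its points in $\mathbf{B}(0,2r)$ and rescaling by $r$ yields $\mathbb{P}(C(0,\eta_t)\le r)=e^{-\beta}\sum_{n\ge d+1}\frac{\beta^n}{n!}p_n$, where $p_{d+1}$ is as in \eqref{pd+1} and $p_n$ is the analogous probability for $n$ independent uniform points in $\mathbf{B}(0,2)$ (so $p_n=0$ for $n\le d$). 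By the choice of $\alpha_2$ in \eqref{alpha2} the leading term equals $e^{-\beta}\beta^{d+1}p_{d+1}/(d+1)!=e^{-\beta}u^{d+1}/t$, so the multivariate Mecke formula and stationarity give $\mathbb{E}[X_u]=t\,\mathbb{P}(C(0,\eta_t)\le r)=u^{d+1}+O\big((u^{d+2}t^{-1/(d+1)})\wedge u^{d+1}\big)$, with constants depending only on $d$. Combining the locality of the events $\{C(y,\eta_t)\le r\}$ with a second-moment computation (two such events become independent once the nuclei are more than $4r$ apart, and the near-diagonal contribution has the same order as $\mathbb{E}[X_u]-u^{d+1}$) yields likewise $\mathrm{Var}(X_u)\le u^{d+1}+O\big((u^{d+2}t^{-1/(d+1)})\wedge u^{d+1}\big)$. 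Establishing these two estimates, together with the bound on $\sum_i|q_i|$ below, is the main obstacle.

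For the coupling, the multivariate Mecke formula gives $i\,\mathbb{P}(X_u=i)=t\int_W\mathbb{P}\big(C(x,\eta_t)\le r,\ \xi_t^{+x}([0,u])=i\big)\,dx$ for $i\in\N$, where $\xi_t^{+x}$ is $\xi_t$ computed from $\eta_t+\delta_x$. I would enlarge $\eta_t$ by a nucleus $\tilde x$ chosen uniformly in $W$ together with $d+1$ independent further points placed in $\mathbf{B}(\tilde x,2r)$ so as to force $C(\tilde x,\cdot)\le r$, let $X_u^s$ be $\xi_t$ evaluated on the enlarged configuration, and set $Z=X_u^s-X_u-1$. Since adding points only shrinks Voronoi cells, cells of $\eta_t\cap W$ already contained in a ball of radius $r$ remain so, and $\tilde x\in W$ contributes one more small cell, whence $X_u^s\ge X_u+1$ and $Z\ge 0$; in particular $\mathbb{P}(X_u+Z\ge 0)=1$. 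One checks that $X_u$ and $Z$ satisfy \eqref{Mek2} with $\lambda=u^{d+1}$, the error $q_i$ arising from replacing $t\,\mathbb{P}(C(0,\eta_t)\le r)$ by $u^{d+1}$ and from forcing exactly $d+1$ surrounding points instead of conditioning on $\{C(\tilde x,\cdot)\le r\}$ (the terms $p_n$, $n\ge d+2$), so that $\sum_i|q_i|=O\big((u^{d+2}t^{-1/(d+1)})\wedge u^{d+1}\big)$. By the identity \eqref{eq: meanZ} (and its analogue in the presence of the $q_i$) together with the variance bound, $\mathbb{E}[|Z|]=\mathbb{E}[Z]$ is $O(u\,t^{-1/(d+1)})$ on the range where $\beta$ stays bounded, while $\mathbb{E}[|Z|]=O(t)$ for larger $u$ since $X_u\le\eta_t(W)$. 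Finally, because $X_u$ is monotone in $\eta_t$ and $Z$ has the appropriate monotone form, the positive-correlation inequality for Poisson functionals used in \eqref{pcaxx} gives $\mathbb{E}[Z\,\mathbf{1}\{X_u=0\}]\le\mathbb{E}[Z]\,\mathbb{P}(X_u=0)$.

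It then remains to assemble the pieces. For \eqref{lst_thm-pr-1} one inserts the bounds on $\mathbb{E}[|Z|]$ and $\sum_i|q_i|$ into \eqref{fsxx2} and checks, distinguishing the regimes where $\beta$ is small, bounded, or large (the last using $X_u\le\eta_t(W)$), that the right-hand side is $O(u^{d+2}t^{-1/(d+1)})$. For \eqref{òlstingqw} one applies \eqref{rrrf2} with $m$ chosen according to the size of $u$: for small $u$ the term $\frac{m!}{\lambda^m}\mathbb{E}[|Z|]$ forces a small $m$, and using the correlation inequality above together with a Chebyshev bound on $\mathbb{P}(X_u=0)$ coming from the variance estimate, all terms are $O(t^{-1/(d+1)})$ on a range of $u$ large enough that $e^{-u^{d+1}}$ becomes negligible; for the remaining large $u$ one uses that $u\mapsto\mathbb{P}(X_u=0)$ is non-increasing, so $\mathbb{P}(T_t>u)\le\mathbb{P}(X_{u_0}=0)$ at the endpoint $u_0$ of that range, while $e^{-u^{d+1}}\le e^{-u_0^{d+1}}$ is of the required order. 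Taking the supremum over $u\ge 0$ yields \eqref{òlstingqw}, and tracking the constants (all depending only on $d$) produces the claimed $C_{\mathrm{TV}}$ and $C_K$. The delicate points throughout are the two geometric estimates of the second paragraph and the careful choice of $m$ and of the cut-off in $u$ needed to make the final bound uniform.
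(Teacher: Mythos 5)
Your reduction of both parts to Poisson approximation of $X_u=\xi_t([0,u])$, and the idea of applying Theorem \ref{Thmmm2} with a coupling satisfying \eqref{Mek2} followed by a regime-split in $u$, is the right shape and is what the paper does. But the coupling you propose is genuinely different from the paper's, and this is where the plan runs into trouble. The paper proves Proposition \ref{prp-circ-Z} by \emph{thinning}: it takes a uniform $X\in W$, removes $\eta_t$ from the ball $\mathbf{B}(X,4(u/s_t)^{1/d})$, and sets $Z_{t,u}=\xi_t(\eta_t|_{\mathbf{B}(X,4(u/s_t)^{1/d})^c})([0,u])-\xi_t([0,u])\le 0$, with the reference Poisson parameter $\widehat{M}_t([0,u])$ rather than $u^{d+1}$. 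The key geometric input (Lemma \ref{lemmconvhull}) then shows that on the dominant event the Mecke-integral count equals exactly $\xi_t$ of the thinned process plus one, so the $q_i$ are precisely the ``more than $d+1$ points in $\mathbf{B}_4(x)$'' contribution, are \emph{nonnegative}, and sum to $\theta_t([0,u])$, which is computed in Lemma \ref{calc-meas}. Your proposal instead \emph{augments} $\eta_t$ by $\tilde x$ and $d+1$ conditioned points, giving $Z\ge 0$. This does not reproduce the factorization: the enlarged configuration carries the \emph{full} $\eta_t$ plus $d+2$ extra points, whereas the Mecke scenario on the dominant event carries only $d+1$ points of $\eta_t$ in a neighborhood of $x$. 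So your $q_i$ are not obviously signed, and the estimate $\sum_i|q_i|=O(u^{d+2}t^{-1/(d+1)})$ is asserted but not established; neither is the bound $\mathbb{E}[|Z|]=O(ut^{-1/(d+1)})$, which is anyway considerably weaker than the paper's $\mathbb{E}[|Z_{t,u}|]=O(u^{d+2}t^{-(d+2)/(d+1)})$ in Lemma \ref{lem:BoundZtu} (the sharper rate is what lets the paper use the trivial bound $\mathbb{E}[|Z|\mathbf{1}\{\cdot\}]\le\mathbb{E}[|Z|]$ without any correlation inequality for $Z$).

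Two concrete points would fail as written. First, you invoke the FKG-type inequality of \cite[Theorem 20.4]{MR3791470}, as in \eqref{pcaxx}, to get $\mathbb{E}[Z\,\mathbf{1}\{X_u=0\}]\le\mathbb{E}[Z]\,\mathbb{P}(X_u=0)$, citing ``$Z$ has the appropriate monotone form.'' In the $U$-statistic setting this works because $s(\nu+\chi+\delta_x)-s(\nu+\delta_x)\ge s(\nu+\chi)-s(\nu)$ is a genuine supermodularity of counting over $\ell$-tuples. For the circumscribed-radius functional this fails: $\xi_t(\eta_t+\chi)([0,u])-\xi_t(\eta_t)([0,u])$ is not increasing in $\eta_t$ (adding a point of $\eta_t$ near a cell can shrink it to size $\le r$ on its own, cancelling the gain coming from $\chi$), so your $Z$ is not monotone and the inequality is unavailable. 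The paper does use this FKG inequality, but only to bound $q_0(t,u)$, where the monotonicity of the relevant indicators is straightforward. Second, your sign configuration ($Z\ge 0$, $q_i$ of unclear sign) does not match Proposition \ref{prop-bound-0} b), so you lose the one-sided bound $\mathbb{P}(\xi_t([0,u])=0)\le e^{-\widehat M_t([0,u])}$ of Lemma \ref{bnd-xi-crc-0}, which the paper uses both to control $q_0(t,u)$ and to dispense with the large-$u$ tail by an exponential estimate rather than Chebyshev; a Chebyshev bound $\mathbb{P}(X_u=0)\lesssim u^{-(d+1)}$ is substantially weaker and would force a finer partition of the $u$-range. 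So the plan would need a different coupling (or a proof that yours yields the required estimates without FKG and without the one-sided inequality) before the assembly in your last paragraph can be carried out.
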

Note that explicit formulas for the constants  $C_{\mathrm{TV}}$ and $C_K$ are given in the proof  of Theorem \ref{last-thm}. In \cite[Theorem 1, Equation (2d)]{MR3252817}, the weak convergence of $T_t$ to $Y$ as $t\to \infty$ is shown. For an underlying inhomogeneous Poisson process, the weak convergence of $\xi_t$ to a Poisson process and the weak convergence of $T_t$ to $Y$ are proven in \cite[Section 3.3]{pianoforte2021criteria}. Although we only consider stationary Poisson processes, we believe that the arguments employed in this subsection may also establish similar results on the minimal circumscribed radius  for more general Poisson processes with a different rate of convergence in $t$ under some constraints on the density (e.g.\ H\"older continuity).  To the best of our knowledge, the present paper is the first time the rates of convergence for the Poisson approximation of $\xi_t([0,u])$ and the Weibull approximation of $T_t$ have been addressed.

The proof of Theorem \ref{last-thm} requires several preparations. 
We set $$s_t = \alpha_2k_d t^{(d+2)/(d+1)}.$$  Let $M_t$ denote the intensity measure of $\xi_t$,  and let the quantities  $\widehat{M}_t$ and $\theta_t$ on $[0,\infty)$ be defined by 
\begin{align*}
	\widehat{M}_t([0,u]) 
	& = t \int_{W}\,\mathbb{E}\big[\mathbf{1}\big\{s_t C(x,\eta_{t} +\delta_x)^d \leq u \big\} \mathbf{1}\big\{\eta_t\big(\mathbf{B}\big(x,4(u/s_t)^{1/d}\big)\big)=d+1\big\} \big] d x, \\
	\theta_t ([0,u]) 
	& =  t \int_{W}\,\mathbb{E}\big[\mathbf{1}\big\{ s_t C(x,\eta_{t} +\delta_x)^d \leq u \big\} \mathbf{1}\big\{\eta_t\big(\mathbf{B}\big(x,4(u/s_t)^{1/d}\big)\big)>d+1\big\} \big] d x
\end{align*}
for $u\geq 0$. For $x\in W$ and $u\geq 0$ we have
\begin{equation}\label{ConditionBall}
\eta_t(\mathbf{B}(x,2(u/s_t)^{1/d}))\geq d+1 \quad \text{whenever} \quad s_t C(x,\eta_t+\delta_x)^d \leq u. 
\end{equation}
This is the case since $s_t C(x,\eta_t+\delta_x)^d \leq u$ implies that the nuclei of the neighboring cells of $x$ are in $\mathbf{B}(x,2(u/s_t)^{1/d})$ and each Voronoi cell has at least $d+1$ neighboring cells. From the Mecke formula and \eqref{ConditionBall} it follows that
$$
M_t([0,u])= \widehat{M}_t([0,u]) + \theta_t ([0,u]), \quad u\geq 0.
$$

\begin{lemma}\label{calc-meas}
For all $u>0$ and $t>0$,
$$
\widehat{M}_t([0,u])  
=  u^{d+1} \exp\Big(-\frac{4^d u}{\alpha_2 t^{1/(d+1)}}\Big),
\   
\theta_t ([0,u]) \leq 	\frac{2^{d(d+3)} }{\alpha_2 p_{d+1}} \frac{u^{d+2}}{t^{1/(d+1)}} \  \text{and} \ M_t([0,u])\leq \frac{u^{d+1}}{p_{d+1}}. 
$$
\end{lemma}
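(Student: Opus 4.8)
The plan is to derive all three estimates from the Mecke equation for $\eta_t$, writing each quantity as $t\lambda_d(W)=t$ times a single-point integral over $W$ and then analysing the Voronoi cell $N(x,\eta_t+\delta_x)$ locally around $x$. Throughout I abbreviate $r_u:=(u/s_t)^{1/d}$, so that $\{s_tC(x,\eta_t+\delta_x)^d\le u\}=\{N(x,\eta_t+\delta_x)\subseteq\mathbf{B}(x,r_u)\}$, and I use the scaling identity $t\lambda_d(\mathbf{B}(x,c\,r_u))=c^d u/(\alpha_2 t^{1/(d+1)})$, $c>0$, which produces the exponent in the statement. The only probabilistic inputs are that $\eta_t$ is independent over disjoint sets with $\eta_t(A)$ Poisson distributed with mean $t\lambda_d(A)$, and the elementary bound $\mathbb{P}(\eta_t(A)\ge k)\le(t\lambda_d(A))^k/k!$; everything else is the defining relation $\alpha_2^{d+1}=2^{d(d+1)}p_{d+1}/(d+1)!$.

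I would prove the bound on $M_t([0,u])$ first, as it is the shortest. By the Mecke equation $M_t([0,u])=\mathbb{E}[\xi_t([0,u])]=t\int_W\mathbb{P}(N(x,\eta_t+\delta_x)\subseteq\mathbf{B}(x,r_u))\,dx$, and by \eqref{ConditionBall} this event is contained in $\{\eta_t(\mathbf{B}(x,2r_u))\ge d+1\}$. Hence its probability is at most $(t\lambda_d(\mathbf{B}(x,2r_u)))^{d+1}/(d+1)!=2^{d(d+1)}u^{d+1}/(\alpha_2^{d+1}(d+1)!\,t)$, which equals $u^{d+1}/(p_{d+1}t)$ after inserting the value of $\alpha_2$. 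Multiplying by $t$ and using $\lambda_d(W)=1$ gives $M_t([0,u])\le u^{d+1}/p_{d+1}$.

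The identity for $\widehat{M}_t([0,u])$ is the core of the lemma, and its main ingredient is a geometric claim: on the event $\{\eta_t(\mathbf{B}(x,4r_u))=d+1\}$, the indicator $\mathbf{1}\{N(x,\eta_t+\delta_x)\subseteq\mathbf{B}(x,r_u)\}$ (i) depends only on $\eta_t\cap\mathbf{B}(x,4r_u)$, and (ii) forces the $d+1$ points of $\eta_t$ to lie in $\mathbf{B}(x,2r_u)$. For (i), a point at distance $>4r_u$ from $x$ has its bisector with $x$ at distance $>2r_u$ from $x$ and so cannot influence the cell inside $\mathbf{B}(x,2r_u)\supseteq\mathbf{B}(x,r_u)$; since removing points only enlarges a Voronoi cell, and since a convex set that is star-shaped with respect to $x$ and whose trace on $\mathbf{B}(x,2r_u)$ lies in $\mathbf{B}(x,r_u)$ is entirely contained in $\mathbf{B}(x,r_u)$, the events $\{N(x,\eta_t+\delta_x)\subseteq\mathbf{B}(x,r_u)\}$ and $\{N(x,(\eta_t\cap\mathbf{B}(x,4r_u))+\delta_x)\subseteq\mathbf{B}(x,r_u)\}$ coincide on this event. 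For (ii), such a cell is bounded, hence has at least $d+1$ neighbours, hence all $d+1$ points are neighbours, hence each lies within twice the circumradius of $x$, i.e.\ inside $\mathbf{B}(x,2r_u)$. Granting this, condition on $\eta_t(\mathbf{B}(x,4r_u))=d+1$: the $d+1$ points are i.i.d.\ uniform in $\mathbf{B}(x,4r_u)$, they all fall in $\mathbf{B}(x,2r_u)$ with probability $(2^d/4^d)^{d+1}=2^{-d(d+1)}$, and conditionally on that they are i.i.d.\ uniform in $\mathbf{B}(x,2r_u)$, so a rescaling by $1/r_u$ and a translation identify the probability of generating a cell contained in $\mathbf{B}(x,r_u)$ with $p_{d+1}$. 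Therefore the inner conditional probability equals $2^{-d(d+1)}p_{d+1}$, and multiplying by $\mathbb{P}(\eta_t(\mathbf{B}(x,4r_u))=d+1)=e^{-m}m^{d+1}/(d+1)!$ with $m=4^d u/(\alpha_2 t^{1/(d+1)})$, by $t\lambda_d(W)=t$, and simplifying with $\alpha_2^{d+1}=2^{d(d+1)}p_{d+1}/(d+1)!$ leaves exactly $u^{d+1}e^{-m}$. I expect steps (i)--(ii) to be the main obstacle, since they are where the geometry of small cells really enters; the remainder is bookkeeping with Poisson weights and the constant $\alpha_2$.

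For $\theta_t([0,u])$, its definition together with \eqref{ConditionBall} gives $\theta_t([0,u])\le t\int_W\mathbb{P}(\eta_t(\mathbf{B}(x,2r_u))\ge d+1,\ \eta_t(\mathbf{B}(x,4r_u))\ge d+2)\,dx$. I would split this event into $\{\eta_t(\mathbf{B}(x,2r_u))\ge d+2\}$ and $\{\eta_t(\mathbf{B}(x,2r_u))=d+1,\ \eta_t(\mathbf{B}(x,4r_u)\setminus\mathbf{B}(x,2r_u))\ge1\}$; using independence over the disjoint sets $\mathbf{B}(x,2r_u)$ and $\mathbf{B}(x,4r_u)\setminus\mathbf{B}(x,2r_u)$ and the tail bounds $\mathbb{P}(\eta_t(A)\ge k)\le(t\lambda_d(A))^k/k!$, the probability is at most $a^{d+2}/(d+2)!+a^{d+1}b/(d+1)!$ with $a=2^d u/(\alpha_2 t^{1/(d+1)})$ and $b\le 4^d u/(\alpha_2 t^{1/(d+1)})$. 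Multiplying by $t$, the powers of $t$ collapse to $t^{-1/(d+1)}$, and inserting $\alpha_2^{d+1}=2^{d(d+1)}p_{d+1}/(d+1)!$ leaves a bound of the form $c_d\,u^{d+2}/(\alpha_2 p_{d+1}t^{1/(d+1)})$ in which $c_d$ is a power of $2$ that is at most $2^{d(d+3)}$ for every $d\ge1$, which is the asserted inequality.
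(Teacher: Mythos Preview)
Your argument is correct and follows essentially the same line as the paper's proof: the Mecke formula, the localisation of the small-cell event to $\mathbf{B}(x,2r_u)$ via \eqref{ConditionBall}, the identification of the conditional probability with $p_{d+1}$, and Poisson tail bounds. The only minor organisational difference is in the $\theta_t$ estimate: the paper simply drops the small-cell indicator and bounds $\mathbb{P}\big(\eta_t(\mathbf{B}(x,4r_u))>d+1\big)\le \beta_t^{d+2}/(d+2)!$ with $\beta_t=4^d u/(\alpha_2 t^{1/(d+1)})$, which after substituting $\alpha_2^{d+1}$ already gives the constant $2^{d(d+3)}$, whereas your two-case split (keeping both indicators) produces the constant $2^d/(d+2)+4^d$, which is smaller but still dominated by $2^{d(d+3)}$.
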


\begin{proof}
First we compute $\widehat{M}_t([0,u]) $. 
From \eqref{ConditionBall} and the definition of $p_{d+1}$ in \eqref{pd+1} we derive
\begin{align*}
		\widehat{M}_t([0,u]) 
		& = t \int_{W} e^{-2^d k_d tu/s_t}\frac{\big(2^dk_dtu/s_t\big)^{d+1}}{(d+1)!}p_{d+1} \\
		& \qquad \quad \times \mathbb{P}\big(\eta_t\big(\mathbf{B}\big(x,4(u/s_t)^{1/d}\big)\setminus\mathbf{B}\big(x,2(u/s_t)^{1/d}\big)\big)=0\big)d x.
\end{align*}
Substituting $s_t = \alpha_2k_d t^{(d+2)/(d+1)}$ and  $\alpha_2= \big(\frac{2^{d(d+1)}}{(d+1)!}p_{d+1}\big)^{1/(d+1)}$  into the previous equation implies that the right-hand side equals
\begin{align*}
	&  u^{d+1} \int_{W} \exp\bigg(-\frac{2^d u}{\alpha_2 t^{1/(d+1)}} -t\lambda_d\big(\mathbf{B}	\big(x,4(u/s_t)^{1/d}\big)\setminus\mathbf{B}\big(x,2(u/s_t)^{1/d}\big)\big)\bigg)
	d x
	\\
	& =  u^{d+1} \exp\bigg(-\frac{2^d u}{\alpha_2 t^{1/(d+1)}}  -\frac{2^d u}{\alpha_2 t^{1/(d+1)}}(2^d-1)\bigg) 
	= u^{d+1} \exp\Big(-\frac{4^d u}{\alpha_2 t^{1/(d+1)}}\Big),
\end{align*}
which completes the first part of the proof.
	
For $u>0$, we have
$$
\theta_t ([0,u]) 
 \leq t \int_{W}\,\mathbb{E}\big[\mathbf{1}\big\{\eta_t\big(\mathbf{B}\big(x,4(u/s_t)^{1/d}\big)\big)>d+1\big\} \big] d x = t  \sum_{k=d+2}^\infty e^{-\beta_t}\frac{\beta_t^k}{k!}
$$
with $\beta_t =4^d k_d t u/s_t$. Elementary calculations imply that
\begin{align*}
	t  \sum_{k=d+2}^\infty e^{-\beta_t}\frac{\beta_t^k}{k!}
	& = t  \beta_t^{d+2} \sum_{k=d+2}^\infty{e^{-\beta_t}\frac{\beta_t^{k-d-2}}{k!}}  
 = t  \beta_t^{d+2} \sum_{\ell=0}^\infty{e^{-\beta_t}\frac{\beta_t^{\ell}}{(\ell+d+2)!}} 	\\
	& 
	\leq \frac{t  \beta_t^{d+2}}{(d+2)!} = \frac{t \big(4^d k_d t u/s_t\big)^{d+2}}{(d+2)!} .
\end{align*}
Substituting $s_t = \alpha_2k_d t^{(d+2)/(d+1)}$ and $\alpha_2= \big(\frac{2^{d(d+1)}}{(d+1)!}p_{d+1}\big)^{1/(d+1)}$ into the latter term yields
$$
\theta_t([0,u])\leq 
\frac{2^{d(d+3)} }{\alpha_2 p_{d+1}} \frac{u^{d+2}}{t^{1/(d+1)}}  ,
$$
which is the desired result.
	
From the Mecke formula, \eqref{ConditionBall} and the same arguments as above, we obtain
\begin{align*}
M_t([0,u]) & \leq t \int_W \mathbb{P}(\eta_t(\mathbf{B}(x,2(u/s_t)^{1/d}))\geq d+1) d x = t \sum_{k=d+1}^\infty \frac{(2^d k_d t u/s_t)^k}{k!} e^{-2^d k_d t u/s_t} \\
& \leq \frac{t (2^d k_d t u/s_t)^{d+1}}{(d+1)!} = \frac{2^{d(d+1)} k_d^{d+1} t^{d+2} u^{d+1} }{k_d^{d+1} \frac{2^{d(d+1)} p_{d+1}}{(d+1)!} (d+1)! t^{d+2} } = \frac{u^{d+1}}{p_{d+1}},
\end{align*}
which concludes the proof.
\end{proof}

We now provide a statement from \cite[Lemma 3.14]{pianoforte2021criteria}, which will be employed in the proof of the subsequent proposition.

\begin{lemma}\label{lemmconvhull}
	Let $x_0,\hdots,x_{d+1}\in\mathbb{R}^d$ be in general position (i.e., no $k$-dimensional affine subspace of $\mathbb{R}^d$ with $k\in\{0,\hdots,d-1\}$ contains more than $k+1$ of the points) and assume that $N(x_0,\sum_{j=0}^{d+1} \delta_{x_i})$ is bounded. Then
$N(x_i,\sum_{j=0}^{d+1} \delta_{x_i})$ is unbounded for any $i\in\{1,\hdots,d+1\}$.
\end{lemma}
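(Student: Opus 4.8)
The plan is to reduce the statement to the classical fact that, for a finite point set $P\subset\mathbb{R}^d$ which affinely spans $\mathbb{R}^d$, the Voronoi cell of a point $p\in P$ is bounded if and only if $p$ lies in the interior of $\conv(P)$. First I would observe that, since the points are distinct (being in general position), $N(x_i,\sum_{j=0}^{d+1}\delta_{x_j})$ is exactly the usual Voronoi cell of $x_i$ with respect to $P:=\{x_0,\dots,x_{d+1}\}$, namely $\{y\in\mathbb{R}^d:\|y-x_i\|\le\|y-x_j\|\text{ for all }j\ne i\}$; the extra $\delta_x$ in the paper's definition of $N(x,\cdot)$ only forces $x$ into the configuration and does not alter the cell. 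I would also note that the general position hypothesis forces $P$ to affinely span $\mathbb{R}^d$, since no $(d-1)$-dimensional affine subspace can contain $d+1$ of the $d+2$ points.

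Next I would prove (or cite) the boundedness characterization. For the ``only if'' direction: if $p$ lies on the boundary of $\conv(P)$, take an outward unit normal $n$ of a supporting hyperplane at $p$, so that $\langle q-p,n\rangle\le 0$ for every $q\in P$; a one-line computation gives $\|p+sn-q\|^2-\|p+sn-p\|^2=\|p-q\|^2-2s\langle q-p,n\rangle\ge\|p-q\|^2>0$ for all $s\ge 0$ and $q\ne p$, so the ray $\{p+sn:s\ge 0\}$ lies in the cell, which is therefore unbounded. For the converse: if $p\in\interior\conv(P)$, then for every nonzero $v$ there is some $q\in P\setminus\{p\}$ with $\langle q-p,v\rangle>0$ (otherwise $p$ would maximise $\langle\,\cdot\,,v\rangle$ over $\conv(P)$ and hence lie on the boundary); since $N(p)$ is closed, convex and contains $p$, any unboundedness would produce a ray $\{p+sv:s\ge 0\}\subseteq N(p)$, and the same computation forces $\langle q-p,v\rangle\le 0$ for all $q\ne p$, a contradiction. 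Hence $N(p)$ is bounded.

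Granting this, I would conclude as follows. The hypothesis that $N(x_0,\sum_j\delta_{x_j})$ is bounded yields $x_0\in\interior\conv(P)$, so $x_0$ is not an extreme point of $\conv(P)$; hence the extreme points of $\conv(P)$ all lie among $x_1,\dots,x_{d+1}$. Since $\conv(P)$ is $d$-dimensional it has at least $d+1$ vertices, so $x_1,\dots,x_{d+1}$ are exactly its vertices and $\conv(P)=\conv\{x_1,\dots,x_{d+1}\}$ is a $d$-simplex (the points $x_1,\dots,x_{d+1}$ being affinely independent by general position). Each $x_i$ with $1\le i\le d+1$ is therefore a vertex of this simplex, in particular not in the interior of $\conv(P)$, and the characterization above gives that $N(x_i,\sum_j\delta_{x_j})$ is unbounded. (An alternative route would be through the unique Radon partition of $d+2$ points in general position, but the convex-hull argument seems more direct.)

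I expect the only genuine obstacle to be the boundedness characterization of Voronoi cells: it is folklore in computational geometry but requires the short supporting-hyperplane/ray argument above to be self-contained; everything afterwards is elementary convex geometry. A secondary point needing a little care is to check that the stated notion of general position is exactly what is used --- both to ensure $P$ spans $\mathbb{R}^d$ and to ensure $x_1,\dots,x_{d+1}$ are affinely independent, so that their convex hull really is a simplex.
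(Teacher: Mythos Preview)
Your argument is correct. Note, however, that the paper does not actually prove this lemma: it only quotes it from \cite[Lemma~3.14]{pianoforte2021criteria}, so there is no in-paper proof to compare against. Your route via the classical characterization --- for a finite set $P\subset\mathbb{R}^d$ affinely spanning $\mathbb{R}^d$, the Voronoi cell of $p\in P$ is bounded if and only if $p\in\interior\conv(P)$ --- is the standard one, and your supporting-hyperplane/ray computations for both directions are sound. The finishing step is clean: boundedness of $N(x_0,\cdot)$ puts $x_0$ in $\interior\conv(P)$, so $\conv(P)=\conv\{x_1,\dots,x_{d+1}\}$; general position makes $x_1,\dots,x_{d+1}$ affinely independent, hence each is a vertex of this $d$-simplex and lies on $\partial\conv(P)$, giving unboundedness of $N(x_i,\cdot)$ for $i\geq 1$. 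Your remarks about where exactly the general-position hypothesis is used (spanning and affine independence of the $d+1$ remaining points) are accurate.
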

Next we construct a random variable that satisfies \eqref{Mek2} for $\xi_t([0,u])$ with remainder terms $q_{i},i\in\N_0$, which vanish as $t\to\infty$. By $B^c$ we denote the complement of $B\subset\R^d$ and by $\eta_t|_B$ the restriction of $\eta_t$ to $B$.

\begin{proposition}\label{prp-circ-Z}
	Let $X$ be uniformly distributed in $W$ and independent of $\eta_t$. Then for $u> 0$,
	$$
	k\mathbb{P}(\xi_t([0,u])=k) =   \widehat{M_t}([0,u]) \mathbb{P}( \xi_t([0,u]) + Z_{t,u} = k-1 ) + q_{k-1} (t,u), \quad k\in\N,
	$$
	with
	$$
	Z_{t, u} =\xi_t\big(\eta_t|_{\mathbf{B}(X,4(u/s_t)^{1/d})^c}\big) ([0,u]) -  \xi_t([0,u])
	$$
	and
	\begin{align*}
		q_i(t,u)
		&= t \int_{W}\mathbb{E}\Big[\mathbf{1}\big\{s_t C(x,\eta_{t} + \delta_x)^d \leq u \big\} \mathbf{1}\big\{\eta_t\big(\mathbf{B}\big(x,4(u/s_t)^{1/d}\big)\big)>d+1\big\}
		\\
		&\qquad \qquad \times\mathbf{1}\Big\{\sum_{y\in\eta_{t}\cap W} \mathbf{1}\big\{s_t C(y,\eta_{t}+\delta_x)^d\leq u\big\}=i\Big\} \Big] d x
	\end{align*}
	for $i\in\N_0$.
\end{proposition}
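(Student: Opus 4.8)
The plan is to derive the identity via the Mecke formula, carefully separating the contribution where the ball $\mathbf{B}(x,4(u/s_t)^{1/d})$ contains exactly $d+1$ points of $\eta_t$ from the contribution where it contains more. First I would start, as in the proofs of Proposition \ref{prop2} and Theorem \ref{U-stat-fin}, from
$$
k\mathbb{P}(\xi_t([0,u])=k) = \mathbb{E}\big[\xi_t([0,u])\,\mathbf{1}\{\xi_t([0,u])=k\}\big] = \mathbb{E}\sum_{x\in\eta_t\cap W}\mathbf{1}\big\{s_t C(x,\eta_t)^d\leq u\big\}\mathbf{1}\{\xi_t([0,u])=k\},
$$
and apply the Mecke formula to pull out one point $x$. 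Writing $\eta_t$ for $\eta_t+\delta_x$ inside the integrand, this produces $t\int_W \mathbb{E}\big[\mathbf{1}\{s_t C(x,\eta_t+\delta_x)^d\leq u\}\,\mathbf{1}\{\text{(count with the extra point }x)\,=k\}\big]\,dx$, where the count is $\sum_{y\in(\eta_t+\delta_x)\cap W}\mathbf{1}\{s_t C(y,\eta_t+\delta_x)^d\leq u\}$. The key point is that on the event $\{s_t C(x,\eta_t+\delta_x)^d\leq u\}$ the extra point $x$ itself is counted, so this count equals $1 + \sum_{y\in\eta_t\cap W}\mathbf{1}\{s_t C(y,\eta_t+\delta_x)^d\leq u\}$; hence the indicator becomes $\mathbf{1}\{\sum_{y\in\eta_t\cap W}\mathbf{1}\{s_t C(y,\eta_t+\delta_x)^d\leq u\}=k-1\}$.

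Next I would split the $x$-integral according to whether $\eta_t\big(\mathbf{B}(x,4(u/s_t)^{1/d})\big)=d+1$ or $>d+1$ (by \eqref{ConditionBall} the count is always at least $d+1$ on the event $\{s_t C(x,\eta_t+\delta_x)^d\leq u\}$). The ``$>d+1$'' part is exactly $q_{k-1}(t,u)$ by definition. For the ``$=d+1$'' part I would argue that when $\eta_t\big(\mathbf{B}(x,4(u/s_t)^{1/d})\big)=d+1$ and $s_t C(x,\eta_t+\delta_x)^d\leq u$, the $d+1$ points of $\eta_t$ in that ball are precisely the neighbors of $x$ (they lie in $\mathbf{B}(x,2(u/s_t)^{1/d})$ by \eqref{ConditionBall}), and removing $x$ makes each of their cells unbounded by Lemma \ref{lemmconvhull} (after checking general position, which holds a.s.), so none of them has circumscribed radius $\leq u$ in $\eta_t$; moreover points of $\eta_t$ outside $\mathbf{B}(x,4(u/s_t)^{1/d})$ have the same cell with respect to $\eta_t+\delta_x$ as with respect to $\eta_t$ restricted to that complement. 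Consequently, on this event,
$$
\sum_{y\in\eta_t\cap W}\mathbf{1}\{s_t C(y,\eta_t+\delta_x)^d\leq u\} = \xi_t\big(\eta_t|_{\mathbf{B}(x,4(u/s_t)^{1/d})^c}\big)([0,u]),
$$
and since $x$ (being independent and uniform in $W$) can be replaced by the random variable $X$, this count has the same distribution as $\xi_t([0,u])+Z_{t,u}$ when we further multiply by $\mathbf{1}\{s_t C(X,\eta_t+\delta_X)^d\leq u\}\mathbf{1}\{\eta_t(\mathbf{B}(X,\cdots))=d+1\}$ and integrate; comparing with the definition of $\widehat{M}_t([0,u])$ identifies the ``$=d+1$'' part as $\widehat{M}_t([0,u])\,\mathbb{P}(\xi_t([0,u])+Z_{t,u}=k-1)$.

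The main obstacle I expect is the bookkeeping in the ``$=d+1$'' case: one must justify rigorously that the cells of the far-away points of $\eta_t$ are unaffected by adding $x$ (a locality statement for Voronoi cells, using that the neighbors of those points all lie within $\mathbf{B}(x,4(u/s_t)^{1/d})$ whenever their circumscribed radius is $\le u$, since $2(u/s_t)^{1/d}+2(u/s_t)^{1/d}\le 4(u/s_t)^{1/d}$), and that the $d+1$ inner points lose their bounded cells, so that the full count collapses exactly to the count over $\eta_t|_{\mathbf{B}(x,4(u/s_t)^{1/d})^c}$. Once that locality bookkeeping is in place, the decomposition $k\mathbb{P}(\xi_t([0,u])=k)=\widehat{M}_t([0,u])\mathbb{P}(\xi_t([0,u])+Z_{t,u}=k-1)+q_{k-1}(t,u)$ follows by collecting the two pieces, which is the claim.
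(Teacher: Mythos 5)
Your proposal follows essentially the same route as the paper's proof: the Mecke formula, the observation that on $\{s_tC(x,\eta_t+\delta_x)^d\le u\}$ the added point $x$ contributes one to the count so the remaining count must equal $k-1$, the split according to whether $\eta_t\big(\mathbf{B}(x,4(u/s_t)^{1/d})\big)=d+1$ or $>d+1$ (the latter being $q_{k-1}(t,u)$ by definition), and in the $=d+1$ case the use of \eqref{ConditionBall}, a locality argument for Voronoi cells, and Lemma \ref{lemmconvhull} to reduce the inner sum to $\xi_t\big(\eta_t|_{\mathbf{B}(x,4(u/s_t)^{1/d})^c}\big)([0,u])$.

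The gap is in the last step. After the reduction you have, writing $\mathbf{B}_4(x)=\mathbf{B}(x,4(u/s_t)^{1/d})$,
$$
A_k = t\int_W \mathbb{E}\Big[\mathbf{1}\big\{s_tC(x,\eta_t+\delta_x)^d\leq u\big\}\mathbf{1}\big\{\eta_t(\mathbf{B}_4(x))=d+1\big\}\mathbf{1}\big\{\xi_t(\eta_t|_{\mathbf{B}_4(x)^c})([0,u])=k-1\big\}\Big]\,dx,
$$
and you argue ``replace $x$ by $X$, multiply, integrate and compare with the definition of $\widehat{M}_t$.'' That on its own does not yield the product $\widehat{M}_t([0,u])\,\mathbb{P}(\xi_t([0,u])+Z_{t,u}=k-1)$. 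You must observe that the first two indicators are functions of $\eta_t|_{\mathbf{B}_4(x)}$ while the third is a function of $\eta_t|_{\mathbf{B}_4(x)^c}$, so by the complete independence of the Poisson process on disjoint sets the expectation factorizes; and then that, by stationarity of $\eta_t$, the factor $\mathbb{E}\big[\mathbf{1}\{s_tC(x,\eta_t+\delta_x)^d\le u\}\mathbf{1}\{\eta_t(\mathbf{B}_4(x))=d+1\}\big]$ does not depend on $x\in W$, equals $\widehat{M}_t([0,u])/t$ (using $\lambda_d(W)=1$), and can be pulled out of the integral. What remains, $\int_W\mathbb{P}\big(\xi_t(\eta_t|_{\mathbf{B}_4(x)^c})([0,u])=k-1\big)\,dx$, equals $\mathbb{P}(\xi_t([0,u])+Z_{t,u}=k-1)$ because $X$ is uniform on $W$ and independent of $\eta_t$. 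These two ingredients, independence and stationarity, are what your proposal glosses over; everything else matches the paper.
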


\begin{proof}
	The Mecke equation implies for $k\in\N$ that
	\begin{align*}
		& k\mathbb{P}(\xi_t([0,u])=k)
		= t \int_{W}\mathbb{E}\big[\mathbf{1}\big\{ s_t C(x,\eta_{t}+\delta_x)^d \leq u \big\}\mathbf{1}\big\{\xi_t(\eta_t + \delta_x)([0,u])= k\big\} \big] d x
		\\
		& = t \int_{W}\mathbb{E}\Big[\mathbf{1}\big\{s_tC(x,\eta_{t}+\delta_x)^d \leq u \big\}\mathbf{1}\Big\{ \sum_{y\in\eta_{t}\cap W} \mathbf{1}\big\{s_tC(y,\eta_{t}+\delta_x)^d \leq u\big\}=k-1\Big\} \Big] d x.
	\end{align*}
	Now we divide the integral in
	\begin{align*}
		A_k + q_{k-1}(t,u) 
		: = & \,\, t \int_{W} \mathbb{E}\Big[\mathbf{1}\big\{ s_t C(x,\eta_{t} +\delta_x)^d \leq u \big\} \mathbf{1}\big\{\eta_t\big(\mathbf{B}\big(x,4(u/s_t)^{1/d}\big)\big)=d+1\big\}
		\\
		& \qquad\qquad \times \mathbf{1}\Big\{\sum_{y\in\eta_{t}\cap W} \mathbf{1}\big\{s_tC(y,\eta_{t}+\delta_x)^d \leq u\big\}=k-1\Big\} \Big] d  x
		\\
		&+ t \int_{W}\mathbb{E}\Big[\mathbf{1}\big\{s_t C(x,\eta_{t} + \delta_x)^d \leq u \big\} \mathbf{1}\big\{\eta_t\big(\mathbf{B}\big(x,4(u/s_t)^{1/d}\big)\big)>d+1\big\}
		\\
		& \qquad\qquad \times \mathbf{1}\Big\{\sum_{y\in\eta_{t}\cap W} \mathbf{1}\big\{s_t C(y,\eta_{t}+\delta_x)^d \leq u\big\} =k-1\Big\} \Big] d x.
	\end{align*}
	Then, it is enough to show that $A_k=\widehat{M_t}([0,u]) \mathbb{P}( \xi_t([0,u]) + Z_{t,u} = k-1 )$. 
In order to simplify the notation throughout this proof, we write
$$
\mathbf{B}_2(x):=\mathbf{B}\big(x,2(u/s_t)^{1/d}\big) \quad \text{and} \quad  \mathbf{B}_4(x):=\mathbf{B}\big(x,4(u/s_t)^{1/d}\big),\quad  x\in\R^d.
$$
In case there are only $d+1$ points of $\eta_t$ in $\mathbf{B}_4(x)$, we have by \eqref{ConditionBall} that $s_t C(x, \eta_t + \delta_x)^d \leq u$ only if the $d+1$ elements of $\eta_t$ belong to $\mathbf{B}_2(x)$. Therefore we obtain
\begin{equation}\begin{split}\label{A:fisrtident}
A_k 
& =  t \int_{W}\,\mathbb{E}\Big[\mathbf{1}\big\{ s_t C(x,\eta_{t} +\delta_x)^d \leq u \big\}  
\mathbf{1}\big\{\eta_t(\mathbf{B}_4(x)\setminus \mathbf{B}_2(x))=0,\eta_t(\mathbf{B}_2(x))=d+1\big\} \\
& \qquad \qquad \quad\times \mathbf{1}\Big\{\sum_{y\in\eta_{t}\cap W} \mathbf{1}\{s_t C(y,\eta_{t}+\delta_x)^d \leq u \big\}=k-1\Big\} \Big] d x .
\end{split}\end{equation}
The observation that
\begin{align}\label{obs: circRad}
s_t C(y, \eta_t + \delta_x)^d \leq u
\quad \text{if and only if} \quad
s_t C\big(y, (\eta_t + \delta_x)|_{\mathbf{B}_2(y)}\big)^d \leq u
\end{align}
for $y\in\eta_t$ establishes that 
\begin{align*}
A_k 
& =  t \int_{W}\,\mathbb{E}\Big[\mathbf{1}\big\{ s_t C(x,\eta_{t} +\delta_x)^d \leq u \big\} 
\mathbf{1}\big\{\eta_t(\mathbf{B}_4(x)\setminus \mathbf{B}_2(x))=0,\eta_t(\mathbf{B}_2(x))=d+1\big\}
\\
	& \qquad \times \mathbf{1}\Big\{\xi_t(\eta_t|_{\mathbf{B}_4(x)^c} )([0,u]) 
+ \sum_{y\in\eta_{t}\cap \mathbf{B}_2(x) \cap W} \mathbf{1}\big\{s_t C(y,\eta_{t}+\delta_x)^d \leq u \big\} =k-1\big\} \Big] d x.
\end{align*}
Suppose that  $s_t C(x,\eta_{t} +\delta_x)^d \leq u$ and that there are exactly $d+1$ points $y_1,\dots , y_{d+1}$ of $\eta_t$ in $\mathbf{B}_2(x)$ and $\eta_t\cap \mathbf{B}_4(x)\cap \mathbf{B}_2(x)^{c}=\emptyset$. From Lemma \ref{lemmconvhull} it follows that the Voronoi cells $N(y_i, \eta_t|_{\mathbf{B}_4(x)} +\delta_x), i=1,\dots,d+1$, are unbounded. In particular, we have
	$$
	C(y_i, \eta_t + \delta_x)> (u/s_t)^{1/d}, \quad i=1,\dots, d+1.
	$$
Together with the same arguments used to show \eqref{A:fisrtident} and independence, this implies that
	\begin{align*}
	A_k 
	& =  t \int_{W}\,\mathbb{E}\big[\mathbf{1}\big\{ s_t C(x,\eta_{t} +\delta_x)^d \leq u \big\} 
	\mathbf{1}\{\eta_t(\mathbf{B}_4(x)\setminus \mathbf{B}_2(x))=0,\eta_t(\mathbf{B}_2(x))=d+1\}
	\\
	& \qquad \qquad \times \mathbf{1}\{\xi_t(\eta_t|_{\mathbf{B}_4(x)^c} )([0,u]) =k-1
 \}\big] d x
 \\
& = t \int_{W}\,\mathbb{E}\big[\mathbf{1}\big\{s_t C(x,\eta_{t} +\delta_x)^d \leq u \big\} \mathbf{1}\{\eta_t(\mathbf{B}_4(x))=d+1\}\big] 
\\
& \qquad \quad \times
\mathbb{P}\big( \xi_t(\eta_t|_{\mathbf{B}_4(x)^c} \big) ([0,u]) = k-1\big) d x .
\end{align*}
Then, because the expectation in the latter equation does not depend on the choice of $x\in W$, we have that 
\begin{align*}
A_k & =\widehat{M_t}([0,u])\int_W \mathbb{P}\big( \xi_t\big(\eta_t|_{\mathbf{B}_4(x)^c}\big) ([0,u]) = k-1\big) dx \\
& = \widehat{M_t}([0,u]) \mathbb{P}( \xi_t([0,u]) + Z_{t,u}= k-1 )
\end{align*}
with
	$$
	Z_{ t, u} =  \xi_t\big(\eta_t|_{\mathbf{B}_4(X)^c}\big) ([0,u]) - \xi_t([0,u]) .
	$$
This and $\mathbf{B}_4(X)= \mathbf{B}\big(X,4(u/s_t)^{1/d}\big)$ give the desired conclusion.
\end{proof}

\begin{lemma}\label{lem:BoundZtu}
For $u>0$, $t>0$ and $Z_{t,u}$ as in Proposition \ref{prp-circ-Z},
$$
\mathbb{E}[|Z_{t,u}|] \leq \frac{6^d}{\alpha_2 p_{d+1}} \frac{u^{d+2}}{t^{(d+2)/(d+1)}}.
$$
\end{lemma}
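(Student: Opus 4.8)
The plan is to bound $|Z_{t,u}|$ by the number of nuclei in a small ball centred at $X$ whose cells have small circumscribed radius, and then to compute the expectation of that count with Lemma~\ref{calc-meas}. Throughout I abbreviate $\rho=(u/s_t)^{1/d}$, so that the ball removed in the definition of $Z_{t,u}$ is $\mathbf{B}(X,4\rho)$ and the event $\{s_tC(x,\nu)^d\le u\}$ says that $N(x,\nu)$ fits in a ball of radius $\rho$ around $x$.

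First I would observe that $Z_{t,u}\le 0$ almost surely. Deleting points of $\eta_t$ only enlarges Voronoi cells, so $C(x,\eta_t|_{\mathbf{B}(X,4\rho)^c})\ge C(x,\eta_t)$ for every $x\in\eta_t|_{\mathbf{B}(X,4\rho)^c}$, and the set of nuclei summed over by $\xi_t(\eta_t|_{\mathbf{B}(X,4\rho)^c})$ is contained in the one for $\xi_t(\eta_t)$; hence $\xi_t(\eta_t|_{\mathbf{B}(X,4\rho)^c})([0,u])\le\xi_t([0,u])$ and $|Z_{t,u}|=\xi_t([0,u])-\xi_t(\eta_t|_{\mathbf{B}(X,4\rho)^c})([0,u])$.

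The crucial step is a locality argument: every nucleus $x\in\eta_t\cap W$ with $\|x-X\|>6\rho$ contributes the same amount to $\xi_t([0,u])$ and to $\xi_t(\eta_t|_{\mathbf{B}(X,4\rho)^c})([0,u])$. Indeed $\|x-X\|>6\rho=2\rho+4\rho$ gives $\mathbf{B}(x,2\rho)\cap\mathbf{B}(X,4\rho)=\emptyset$, so $\eta_t$ and $\eta_t|_{\mathbf{B}(X,4\rho)^c}$ agree inside $\mathbf{B}(x,2\rho)$; moreover $x\notin\mathbf{B}(X,4\rho)$, so $x$ belongs to the restricted process as well. Since whether $s_tC(x,\nu)^d\le u$ holds depends only on $\nu$ restricted to $\mathbf{B}(x,2\rho)$ — this is exactly the locality recorded in \eqref{ConditionBall} and \eqref{obs: circRad} — the corresponding indicator is unchanged. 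Cancelling all such common contributions and dropping the remaining (nonnegative) terms of the restricted process leaves
\[
|Z_{t,u}|\ \le\ \sum_{x\in\eta_t\cap W}\mathbf{1}\{\|x-X\|\le 6\rho\}\,\mathbf{1}\{s_tC(x,\eta_t)^d\le u\}\ =:\ A .
\]

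Finally, conditioning on $\eta_t$ and using that $X$ is uniform on $W$ with $\lambda_d(W)=1$ gives $\mathbb{E}[A\mid\eta_t]\le\lambda_d\big(\mathbf{B}(0,6\rho)\big)\,\xi_t([0,u])=6^dk_d\,(u/s_t)\,\xi_t([0,u])$, whence $\mathbb{E}[|Z_{t,u}|]\le 6^dk_d\,(u/s_t)\,M_t([0,u])$. Substituting $M_t([0,u])\le u^{d+1}/p_{d+1}$ from Lemma~\ref{calc-meas} and $s_t=\alpha_2k_dt^{(d+2)/(d+1)}$ produces $\tfrac{6^d}{\alpha_2 p_{d+1}}\tfrac{u^{d+2}}{t^{(d+2)/(d+1)}}$, as claimed. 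The main obstacle is making the locality step rigorous, i.e.\ justifying, via the disjointness of $\mathbf{B}(x,2\rho)$ and $\mathbf{B}(X,4\rho)$ together with \eqref{obs: circRad}, that removing the points inside $\mathbf{B}(X,4\rho)$ leaves the small-circumscribed-radius status of distant nuclei untouched; the remaining estimates are a one-line computation.
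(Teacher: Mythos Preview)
Your proof is correct and follows essentially the same approach as the paper: bound $|Z_{t,u}|$ by the number of nuclei in $\mathbf{B}(X,6\rho)\cap W$ with small circumscribed radius via the locality observation \eqref{obs: circRad}, then use $M_t([0,u])\le u^{d+1}/p_{d+1}$ from Lemma~\ref{calc-meas}. The only cosmetic difference is that the paper fixes $x\in W$ and applies the Mecke formula to compute the expected count, whereas you condition on $\eta_t$ and integrate out the uniform variable $X$; the two orders of integration yield the same bound.
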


\begin{proof}
For $x\in W$ it follows from the observation in \eqref{obs: circRad} that
$$
0 \leq \xi_t([0,u]) - \xi_t\big(\eta_t|_{\mathbf{B}(x,4(u/s_t)^{1/d})^c}\big) ([0,u])  \leq \sum_{y\in\eta_t\cap W \cap \mathbf{B}(x,6(u/s_t)^{1/d})} \mathbf{1}\{ s_t C(y,\eta_t)^d \leq u \}.
$$
By the Mecke formula and the stationarity of $\eta_t$, we obtain
\begin{align*}
\mathbb{E} \sum_{y\in\eta_t\cap W \cap \mathbf{B}(x,6(u/s_t)^{1/d})} \!\!\!\!\!\! \mathbf{1}\{ s_t C(y,\eta_t)^d \leq u \} & \leq t \lambda_{d}(W \cap \mathbf{B}(x,6(u/s_t)^{1/d})) \mathbb{P}( s_t C(0,\eta_t+\delta_0)^d \leq u ) \\
& \leq \frac{6^d u}{\alpha_2 t^{(d+2)/(d+1)}} t\mathbb{P}( s_t C(0,\eta_t+\delta_0)^d\leq u ).
\end{align*}
From Lemma \ref{calc-meas} we deduce
$$
t \mathbb{P}( s_t C(0,\eta_t+\delta_0)^d\leq u ) = M_t([0,u]) \leq \frac{u^{d+1}}{p_{d+1}},
$$
which proves the assertion.
\end{proof}

\begin{lemma}\label{bnd-xi-crc-0}
For $u>0$ and $t>0$, 
	\begin{align*}
\mathbb{P}(T_t>u)	= \mathbb{P}(\xi_t([0,u])=0) \leq e^{-\widehat{M}_t([0,u])}.
	\end{align*}
\end{lemma}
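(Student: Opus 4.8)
The first equality is immediate: $T_t$ is by definition the first arrival time of the point process $\xi_t$, so $\{T_t>u\}$ is precisely the event $\{\xi_t([0,u])=0\}$ that $\xi_t$ places no point in $[0,u]$.

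For the inequality, the plan is to apply Proposition \ref{prop-bound-0}\,b) with $\xi_t([0,u])$ in the role of the random variable there, $\lambda:=\widehat M_t([0,u])$ (strictly positive for $u>0$ by Lemma \ref{calc-meas}), $Z:=Z_{t,u}$, and the sequence $q_i:=q_i(t,u)$, $i\in\N_0$. Proposition \ref{prp-circ-Z} shows that these data satisfy \eqref{Mek2}, so it remains only to check the three sign hypotheses of Proposition \ref{prop-bound-0}\,b). First, $q_i(t,u)\geq 0$ for all $i\in\N_0$, since the integrand in its definition is a product of indicator functions. Second, $\xi_t([0,u])+Z_{t,u}=\xi_t\big(\eta_t|_{\mathbf{B}(X,4(u/s_t)^{1/d})^c}\big)([0,u])$ is the value of a (random) counting measure on a set and is therefore non-negative, so $\mathbb{P}\big(\xi_t([0,u])+Z_{t,u}\geq 0\big)=1$. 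Third, $Z_{t,u}\leq 0$: deleting the points of $\eta_t$ lying in $\mathbf{B}(X,4(u/s_t)^{1/d})$ can only enlarge the circumscribed radii of the remaining cells with nucleus in $W$, whence $\xi_t\big(\eta_t|_{\mathbf{B}(X,4(u/s_t)^{1/d})^c}\big)([0,u])\leq\xi_t([0,u])$; this is exactly the left-hand inequality established, for each fixed $x\in W$, at the beginning of the proof of Lemma \ref{lem:BoundZtu}, which rests on the locality observation \eqref{obs: circRad}. Granting these three facts, Proposition \ref{prop-bound-0}\,b) gives $\mathbb{P}(\xi_t([0,u])=0)\leq e^{-\lambda}=e^{-\widehat M_t([0,u])}$, as asserted.

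I do not anticipate a genuine obstacle: the construction of the approximate size-bias coupling together with its error terms $q_i(t,u)$ is carried out in Proposition \ref{prp-circ-Z}, and the monotonicity that yields $Z_{t,u}\le 0$ has already been used in the proof of Lemma \ref{lem:BoundZtu}; the present lemma is merely the observation that these features place us in the setting of Proposition \ref{prop-bound-0}\,b).
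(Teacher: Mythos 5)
Your proposal is correct and follows essentially the same route as the paper: both reduce the claim to Proposition \ref{prop-bound-0}\,b) applied with the coupling and error terms from Proposition \ref{prp-circ-Z}, the paper simply asserting the three sign conditions where you verify them explicitly (and your justification of $Z_{t,u}\le 0$ via monotonicity and of the nonnegativity of $\xi_t([0,u])+Z_{t,u}$ and of the $q_i(t,u)$ is correct).
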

\begin{proof}
The first identity is obvious. Let  $Z_{t,u}$ be the random variable defined in Proposition \ref{prp-circ-Z}. Since $Z_{t,u}\leq 0$, $\mathbb{P}(\xi_t([0,u])+ Z_{t,u}\geq 0)=1$ and $q_i(t,u)\geq 0$ for all $i\in\N_0$, the inequality follows from Proposition \ref{prop-bound-0} b).
\end{proof}

In the next lemma, we combine the results obtained above and  Theorem \ref{Thmmm2} to derive intermediate bounds on the quantities considered in Theorem \ref{last-thm}.
\begin{lemma}\label{lem:IntermediateBounds}
	For  $u>0$ and $t>0$,
	\begin{align}\label{alf}
		d_{TV}\big(\xi_t([0,u]),P_{\widehat{M}_t([0,u])}\big)  
		& \leq \frac{6^d}{\alpha_2 p_{d+1}} \frac{u^{d+2}}{t^{(d+2)/(d+1)}}  + \theta_t([0,u])
	\end{align}
	and
	\begin{equation}\label{alpw}\begin{split}
			0 
			& \leq  e^{-\widehat{M}_t([0,u])} -  \mathbb{P}(T_t>u) 
	\leq \bigg( 1+\frac{1}{\widehat{M}_t([0,u])} \bigg) \frac{6^d}{\alpha_2 p_{d+1}} \frac{u^{d+2}}{t^{(d+2)/(d+1)}} + \frac{2  \theta_t([0,u])}{\widehat{M}_t([0,u])^{2}}.
	\end{split}\end{equation}
\end{lemma}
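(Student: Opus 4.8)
The plan is to obtain \eqref{alf} and \eqref{alpw} by feeding the coupling of Proposition~\ref{prp-circ-Z} into Theorem~\ref{Thmmm2}. Concretely, take $X=\xi_t([0,u])$ (which is a.s.\ finite and $\N_0$-valued because $W$ is bounded), $\lambda=\widehat{M}_t([0,u])>0$, $Z=Z_{t,u}$, and let the error sequence be $q_i=q_i(t,u)$; then Proposition~\ref{prp-circ-Z} is precisely the hypothesis \eqref{Mek2}. Two observations make the resulting bounds explicit. First, Lemma~\ref{lem:BoundZtu} controls $\mathbb{E}[|Z_{t,u}|]$. Second, each $q_i(t,u)$ is the integral of a non-negative quantity, so $q_i(t,u)\ge 0$, and summing the indicator $\mathbf 1\{\sum_{y\in\eta_t\cap W}\mathbf 1\{s_tC(y,\eta_t+\delta_x)^d\le u\}=i\}$ over $i\in\N_0$ collapses it to $1$, which gives $\sum_{i\ge 0}q_i(t,u)=\theta_t([0,u])$ upon comparison with the definition of $\theta_t$.

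For \eqref{alf} I would apply \eqref{fsxx2}. Using $1\wedge\lambda\le 1$ and $1\wedge\lambda^{-1/2}\le 1$ it reads $d_{TV}(\xi_t([0,u]),P_{\widehat{M}_t([0,u])})\le \mathbb{E}[|Z_{t,u}|]+\sum_{i\ge 0}q_i(t,u)$, so Lemma~\ref{lem:BoundZtu} together with $\sum_{i\ge 0}q_i(t,u)=\theta_t([0,u])$ yields the claim. For \eqref{alpw} I would first invoke Lemma~\ref{bnd-xi-crc-0}, which gives $\mathbb{P}(T_t>u)=\mathbb{P}(\xi_t([0,u])=0)\le e^{-\widehat{M}_t([0,u])}$: this is the left-hand inequality of \eqref{alpw} and identifies $e^{-\widehat{M}_t([0,u])}-\mathbb{P}(T_t>u)$ with $|\mathbb{P}(\xi_t([0,u])=0)-\mathbb{P}(P_{\widehat{M}_t([0,u])}=0)|$. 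I would then apply \eqref{rrrf2} with $m=1$. Since $Z_{t,u}\le 0$ we have $(Z_{t,u})_+=0$ and $(Z_{t,u})_-=|Z_{t,u}|$, so by Remark~\ref{remark1}(i) the single $k=0$ summand $(\widehat{M}_t([0,u])\wedge 1)\,\mathbb{E}[|Z_{t,u}|\mathbf 1\{\xi_t([0,u])-(Z_{t,u})_-=0\}]$ is at most $\mathbb{E}[|Z_{t,u}|]$; adding it to the $\widehat{M}_t([0,u])^{-1}\mathbb{E}[|Z_{t,u}|]$ term produces $(1+\widehat{M}_t([0,u])^{-1})\mathbb{E}[|Z_{t,u}|]$, which by Lemma~\ref{lem:BoundZtu} is the first term on the right-hand side of \eqref{alpw}. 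The remaining summands $(1\wedge\widehat{M}_t([0,u])^{-1})q_0(t,u)$ and $(1\wedge\widehat{M}_t([0,u])^{-2})\sum_{i\ge 1}q_i(t,u)$ are then bounded, via $q_i(t,u)\ge 0$, $\sum_{i\ge 0}q_i(t,u)=\theta_t([0,u])$, and the magic-factor estimates $|f_{\{0\}}(1)|\le 1\wedge\lambda^{-1}$ and $|f_{\{0\}}(i)|\le 1\wedge\lambda^{-2}$ for $i\ge 2$ from Lemma~\ref{lemma}, by $2\theta_t([0,u])/\widehat{M}_t([0,u])^2$.

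Most of the substantive work lies in the preparatory results — the coupling construction of Proposition~\ref{prp-circ-Z} and the moment and measure estimates of Lemmas~\ref{calc-meas}, \ref{lem:BoundZtu} and \ref{bnd-xi-crc-0}; given those, the present lemma is essentially bookkeeping. The only step demanding a little care is matching the $\lambda$-dependent Stein factors appearing in \eqref{rrrf2} to the exact orders claimed on the right-hand side of \eqref{alpw}, i.e.\ verifying that the error terms $q_i(t,u)$ contribute only at order $\theta_t([0,u])/\widehat{M}_t([0,u])^2$.
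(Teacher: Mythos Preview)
Your overall strategy is exactly the paper's: feed the coupling of Proposition~\ref{prp-circ-Z} into Theorem~\ref{Thmmm2}, use Lemma~\ref{lem:BoundZtu} for $\mathbb{E}[|Z_{t,u}|]$, and observe $\sum_{i\ge 0}q_i(t,u)=\theta_t([0,u])$. The derivation of \eqref{alf} is fine, and for \eqref{alpw} the treatment of the $Z$-terms via \eqref{rrrf2} with $m=1$ and Remark~\ref{remark1}(i) is correct.

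There is, however, a genuine gap in the last step, precisely where you say ``a little care'' is needed. From \eqref{rrrf2} the $q$-contribution is
\[
\Big(1\wedge \widehat{M}_t([0,u])^{-1}\Big)q_0(t,u)+\Big(1\wedge \widehat{M}_t([0,u])^{-2}\Big)\sum_{i\ge 1}q_i(t,u).
\]
Using only $q_i\ge 0$ and $\sum_i q_i=\theta_t$ gives at best $\theta_t/\widehat{M}_t+\theta_t/\widehat{M}_t^{\,2}$, which is \emph{not} bounded by $2\theta_t/\widehat{M}_t^{\,2}$ when $\widehat{M}_t([0,u])>1$. The magic factor on $q_0$ is of order $\lambda^{-1}$, not $\lambda^{-2}$, so an extra factor of $\widehat{M}_t([0,u])^{-1}$ must come from $q_0$ itself. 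The paper obtains this by a correlation (Harris/FKG-type) inequality for Poisson functionals: in the integrand defining $q_0(t,u)$ the product $\mathbf{1}\{s_tC(x,\eta_t+\delta_x)^d\le u\}\mathbf{1}\{\eta_t(\mathbf{B}(x,4(u/s_t)^{1/d}))>d+1\}$ is increasing in $\eta_t$, while $\mathbf{1}\{\sum_y\cdots=0\}$ is decreasing, so they decouple and one gets
\[
q_0(t,u)\le \theta_t([0,u])\,\mathbb{P}(\xi_t([0,u])=0)\le \theta_t([0,u])\,e^{-\widehat{M}_t([0,u])}\le \frac{\theta_t([0,u])}{\widehat{M}_t([0,u])},
\]
using Lemma~\ref{bnd-xi-crc-0} and $ve^{-v}\le 1$. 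This is the missing ingredient; once you insert it, the two $q$-terms combine to $2\theta_t([0,u])/\widehat{M}_t([0,u])^{2}$ as claimed.
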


\begin{proof}
From Proposition \ref{prp-circ-Z} it follows that the assumptions of Theorem \ref{Thmmm2} are satisfied. Then, \eqref{fsxx2} in Theorem \ref{Thmmm2} yields
\begin{align*}
& d_{TV}(\xi_t([0,u]),P_{\widehat{M}_t([0,u])}) 
\leq (1 \wedge \widehat{M}_t([0,u]) ) \mathbb{E}[ \vert Z_{t,u} \vert ] + \big(1 \wedge \widehat{M}_t([0,u])^{-1/2}\big) \theta_t([0,u])
\end{align*}
so that \eqref{alf} follows from Lemma \ref{lem:BoundZtu}.
	
Let us now prove \eqref{alpw}. 	From Lemma \ref{bnd-xi-crc-0}, \eqref{rrrf2} in Theorem \ref{Thmmm2} with $m=1$ and $\sum_{i=1}^\infty q_i(t,u)\leq \theta_t([0,u])$ we obtain
\begin{align*}
0 & \leq e^{-\widehat{M}_t([0,u])} -  \mathbb{P}(T_t>u) 
			\leq \frac{\mathbb{E}\big[ \vert Z_{ t,u} \vert \big]}{\widehat{M}_t([0,u])} +  \mathbb{E}\big[ \vert Z_{ t,u} \vert  \big] 
		+ \frac{q_{0}(t,u)}{\widehat{M}_t([0,u])} +  \frac{\theta_t([0,u])}{\widehat{M}_t([0,u])^{2}}.
\end{align*}
The first two terms on the right-hand side can be bounded by Lemma \ref{lem:BoundZtu}.
Recall that
\begin{align*}
q_{0}(t,u)
& = t \int_{W}\mathbb{E}\Big[\mathbf{1}\big\{ s_t C(x,\eta_{t}+\delta_x)^d \leq u\big\} \mathbf{1}\big\{\eta_t\big(\mathbf{B}\big(x,4(u/s_t)^{1/d}\big)\big)>d+1\big\} \\
& \qquad \qquad \times \mathbf{1}\Big\{\sum_{y\in\eta_{t}\cap W} \mathbf{1}\{s_t C(y,\eta_{t}+\delta_x)^d \leq u \} =0\Big\} \Big] dx.
\end{align*}
Since the product of the first two indicator functions is increasing with respect to additional points, while the third indicator function is decreasing, it follows from \cite[Theorem 20.4]{MR3791470} that
\begin{align*}
q_{0}(t,u)
& \leq t \int_{W}\mathbb{E}\Big[\mathbf{1}\big\{ s_t C(x,\eta_{t}+\delta_x)^d \leq u\big\} \mathbf{1}\big\{\eta_t\big(\mathbf{B}\big(x,4(u/s_t)^{1/d}\big)\big)>d+1\big\}\Big] \\
& \qquad \qquad \times \mathbb{P}\Big(\sum_{y\in\eta_{t}\cap W} \mathbf{1}\{s_t C(y,\eta_{t}+\delta_x)^d \leq u \}=0\Big)  dx.
\end{align*}
Now Lemma \ref{bnd-xi-crc-0} and the elementary inequality $v e^{-v}\leq 1$ for $v\geq 0$ lead to
$$
q_{0}(t,u) \leq \theta_t([0,u]) \mathbb{P}(\xi_t([0,u])=0) \leq \theta_t([0,u]) e^{-\widehat{M}_t([0,u])} \leq \frac{\theta_t([0,u])}{\widehat{M}_t([0,u])},
$$
which concludes the proof.
\end{proof}

\begin{proof}[Proof of Theorem \ref{last-thm}]
	Let $u>  0$ be fixed. From \eqref{alf} in Lemma \ref{lem:IntermediateBounds}, Lemma \ref{calc-meas} and $t\geq 1$ it follows that
	\begin{align}\label{BoundTV3.5}
		d_{TV}\big(\xi_t([0,u]),P_{\widehat{M}_t([0,u])}\big)  
		&\leq \frac{6^d }{\alpha_2 p_{d+1}} \frac{u^{d+2}}{t^{(d+2)/(d+1)}}  + \theta_t([0,u])
		\leq \frac{6^d +  2^{d(d+3)} }{\alpha_2  p_{d+1}} \frac{u^{d+2}}{t^{1/(d+1)}} .
	\end{align}
Using a well-known bound for the total variation distance between two Poisson distributed random variables, Lemma \ref{calc-meas} and the inequality $1-e^{-v}\leq v$ for $v\geq 0$, we obtain
$$
d_{TV}\big(P_{u^{d+1}}, P_{\widehat{M}_t([0,u])}\big)
\leq u^{d+1} -\widehat{M}_t([0,u]) = u^{d+1} \bigg( 1 - \exp\left( - \frac{4^d u}{\alpha_2 t^{1/(d+1)}} \right) \bigg)
\leq \frac{4^d u^{d+2}}{\alpha_2 t^{1/(d+1)}}.
$$
Now the triangle inequality yields 
	$$
	d_{TV}\big(\xi_t([0,u]),P_{u^{d+1}}\big) \leq  \frac{3 \cdot 2^{d(d+3)} }{ \alpha_2 p_{d+1}} \frac{u^{d+2}}{t^{1/(d+1)}},
	$$
	which proves \eqref{lst_thm-pr-1}.
	
	Let us now show \eqref{òlstingqw}. From \eqref{BoundTV3.5} and Lemma \ref{bnd-xi-crc-0} we have that, for $u\in[0,1]$,
	\begin{align*}
		0 
		\leq e^{-\widehat{M}_t([0,u])} - \mathbb{P}(T_t >u) 
		\leq \frac{ 2^{d(d+3)+1} }{\alpha_2 p_{d+1}} \frac{1}{t^{1/(d+1)}}.
	\end{align*}
In the following we consider the case $1 \leq u \leq t^{1/(2d+2)} \tau$ with $\tau =\alpha_2/4^d$. From Lemma \ref{calc-meas} and $t\geq 1$ we obtain
\begin{equation}\label{eq:BoundMTilde}
u^{d+1} \geq \widehat{M}_t([0,u]) \geq \frac{u^{d+1}}{e}.
\end{equation}
Together with Lemma \ref{bnd-xi-crc-0}, \eqref{alpw} in Lemma \ref{lem:IntermediateBounds}, Lemma \ref{calc-meas} and $u\geq 1$ we obtain
\begin{align*}
0  \leq e^{-\widehat{M}_t([0,u])} -  \mathbb{P}(T_t>u) 
&  \leq \bigg( 1+\frac{1}{\widehat{M}_t([0,u])} \bigg) \frac{6^d}{\alpha_2 p_{d+1}} \frac{u^{d+2}}{t^{(d+2)/(d+1)}} + \frac{2  \theta_t([0,u])}{\widehat{M}_t([0,u])^{2}} \\
& \leq (1 + e) \frac{6^d}{\alpha_2 p_{d+1}} \frac{u^{d+2}}{t^{(d+2)/(d+1)}} + 2 e^2 \frac{1}{u^{2d+2}} \frac{2^{d(d+3)} }{\alpha_2 p_{d+1}} \frac{u^{d+2}}{t^{1/(d+1)}}.
\end{align*}
Using $	1\leq u^{d+2}\leq t^{(d+2)/(2d+2)} \alpha_2^{d+2}/ 4^{d(d+2)}$, $t\geq 1$ and the definition of $\alpha_2$ in \eqref{alpha2}, we deduce 
\begin{align*}
0  \leq e^{-\widehat{M}_t([0,u])} -  \mathbb{P}(T_t>u) & \leq \frac{(1+e) 6^d}{4^{d(d+2)}} \frac{\alpha_2^{d+1}}{p_{d+1}} \frac{1}{t^{1/(d+1)}} + 2 e^2 \frac{1}{u^{2d+2}} \frac{2^{d(d+3)} }{\alpha_2 p_{d+1}} \frac{u^{d+2}}{t^{1/(d+1)}} \\
& \leq  \frac{1}{t^{1/(d+1)}} + \frac{2^{d(d+3)+4} }{\alpha_2 p_{d+1}} \frac{1}{t^{1/(d+1)}}
\end{align*}
so that
$$
\sup_{u\in [0,t^{1/(2d+2)}\tau]} \vert e^{-\widehat{M}_t([0,u])} -  \mathbb{P}(T_t>u) \vert \leq \bigg[1 + \frac{2^{d(d+3)+4} }{\alpha_2 p_{d+1}}\bigg] \frac{1}{t^{1/(d+1)}}.
$$
Moreover, by Lemma \ref{calc-meas}, \eqref{eq:BoundMTilde} and elementary arguments we obtain  for $0\leq u \leq t^{1/(2d+2)}\tau$ that
\begin{equation*}\begin{split}
0	& \leq  e^{-\widehat{M}_t([0,u])} - e^{-u^{d+1}} \leq   \left[ u^{d+1} - \widehat{M}_t([0,u]) \right] e^{-\widehat{M}_t([0,u])} \\
	& \leq \frac{ 4^d u^{d+2}}{\alpha_2 t^{1/(d+1)}}e^{- u^{d+1}e^{-1}}	\leq \frac{4^d e^{\frac{d+2}{d+1}} }{\alpha_2 t^{1/(d+1)}}  \leq  \frac{2^{2d+3} }{\alpha_2 t^{1/(d+1)}},
\end{split}\end{equation*}
where we used the inequalities $1-e^{-x}\leq x$ and $e^{-x^{d+1}}x^{d+2} \leq 1$ for $x\geq0$. This implies that
$$
\sup_{u\in [0,t^{1/(2d+2)}\tau]} \vert e^{-u^{d+1}} -  \mathbb{P}(T_t>u) \vert \leq \bigg[1 + \frac{2^{d(d+3)+4} }{\alpha_2 p_{d+1}}+ \frac{2^{2d+3} }{\alpha_2 }\bigg] \frac{1}{t^{1/(d+1)}}.
$$
On the other hand, $x^2 e^{-x^{d+1}}\leq 1$ for $x\geq 0$ leads to
\begin{align*}
\exp\left( -(t^{1/(2d+2)}\tau)^{d+1} \right)
 \leq  \frac{1}{( t^{1/(2d+2)}\tau)^2} \leq \frac{16^d}{\alpha_2^2} \frac{1}{t^{1/(d+1)}}.
\end{align*}
Combining the two previous inequalities gives a bound for $\mathbb{P}\left(T_t>t^{1/(2d+2)}\tau\right)$ and it implies
\begin{align*}
& \sup_{u\in [0,\infty)} \vert e^{-u^{d+1}} -  \mathbb{P}(T_t>u) \vert
\\
& \leq \max \big\{\sup_{u\in [0,t^{1/(2d+2)}\tau]} \vert e^{-u^{d+1}} -  \mathbb{P}(T_t>u) \vert, \mathbb{P}\left(T_t>t^{1/(2d+2)}\tau\right) , \exp\left( -(t^{1/(2d+2)}\tau)^{d+1} \right)\big\}
\\
& \leq \bigg[ 1 + \frac{2^{d(d+3)+4} }{\alpha_2 p_{d+1}}+ \frac{2^{2d+3} }{\alpha_2 }+  \frac{16^{d}}{\alpha_2^2}\bigg] \frac{1}{t^{1/(d+1)}}.
\end{align*}
Now the identity $\mathbb{P}(T_t>0)= 1$ concludes the proof.
\end{proof}

\subsection{Maximal inradii of Poisson-Voronoi tessellations}
\label{third-appl-proof}
In this subsection, we consider  the  inradii of stationary Poisson-Voronoi tessellations.  Recall that the inradius of a cell is the largest radius for which the ball centered at the nucleus is contained in the cell.     The aim is to continue the work started in   \cite{MR3252817} by proving that the Kolmogorov distance between a transform of the largest inradius  and a Gumbel  random variable converges to $0$ at a rate of $\log(t)/\sqrt{t}$ as the intensity $t$ of the underlying Poisson process goes to infinity. More details on Poisson-Voronoi tessellations are given in Subsection \ref{circ-rad-proof}.  

Let $W\subset\mathbb{R}^d$ be a measurable set with Lebesgue measure $\lambda_d(W)=1$.  Let $\eta_{t}$ be a Poisson process on $\R^d$ with intensity measure $t\lambda_d, t>0.$ 
Consider the measurable function $h_t: W \times \mathcal{N}(\R^d) \rightarrow \mathbb{R}$ defined as
\begin{align*}
	h_t(x, \mu) 
	= \underset{y\in \mu\setminus\{x\}}{\min}\,t k_d\Vert x-y\Vert^d - \log(t),
\end{align*}
where $k_d$ is the volume of the $d$-dimensional unit ball. Note that for any $x\in\eta_t$,  ${\mathrm{min}}\{\Vert x-y\Vert \, : \,  y\in\eta_t\setminus \{x\}\}$ is twice the    inradius of the Voronoi cell with nucleus $x$ generated by $\eta_t$. Then, the random variable
\begin{align}\label{eq:maxInr}
	T_t = \underset{x\in \eta_t \cap W }{\max} \, h_t(x ,\eta_t)  
\end{align}
is a transform of the maximal inradius over the cells with nucleus in $W$.  We define the point process  $\xi_t$  as
\begin{equation}\label{max-irr-xi}\begin{split}
		\xi_t 
		& = \xi_t(\eta_t) =\sum_{x\in\eta_t\cap W} \delta_{h_t(x,\eta_t)}.
\end{split}\end{equation}
Recall that a random variable $Y$ has a standard Gumbel distribution if its cumulative distribution function is given by
$
\mathbb{P}(Y \leq u)= e^{-e^{-u}}
$
for $u\in\R$.
\begin{theorem}\label{Thm...}
	Suppose $t>e^2$. Let $T_t$  and $\xi_t$ be the random variable and the point process   given by \eqref{eq:maxInr} and \eqref{max-irr-xi},  respectively. Let $Y$ be a random variable with a standard Gumbel distribution. Then,
	\begin{align}\label{ineq-th-1.3.3}
		d_{TV}\big(\xi_t((u,\infty)), P_{e^{-u}}\big)
		\leq 2^{d} \frac{u+\log(t)}{ e^{u/2} \sqrt{t} } +  \frac{u+\log(t)}{e^{u} t}
	\end{align}
	for $u>-\log(t)$, and
	\begin{equation}\begin{split}\label{maxmintest}
			& d_{K}( T_t, Y )
			\leq [2^{d+2}( 4^d +2^d+2)+1]\frac{\log(t)}{\sqrt{t}} .
	\end{split}\end{equation}
\end{theorem}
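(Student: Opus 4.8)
The plan is to follow the template of the circumscribed‑radii subsection: reduce both claims to a Poisson approximation for $\xi_t((u,\infty))$ through an exact size‑bias coupling, and then invoke Theorem~\ref{Thmmm}. First comes the geometric reduction. For $u>-\log t$ put $r_u=\bigl((u+\log t)/(tk_d)\bigr)^{1/d}$, so that $h_t(x,\eta_t)>u$ is equivalent to $\eta_t(\mathbf{B}(x,r_u))=0$; hence $\xi_t((u,\infty))=\sum_{x\in\eta_t\cap W}\mathbf{1}\{\eta_t(\mathbf{B}(x,r_u))=0\}$, $\{T_t\le u\}=\{\xi_t((u,\infty))=0\}$, and, $Y$ being standard Gumbel, $\mathbb{P}(Y\le u)=e^{-e^{-u}}=\mathbb{P}(P_{e^{-u}}=0)$. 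The Mecke formula and $\lambda_d(W)=1$ give $\mathbb{E}[\xi_t((u,\infty))]=t\int_W e^{-tk_dr_u^d}\,dx=e^{-u}$. For $u\le-\log t$ one has $\mathbb{P}(T_t\le u)=\mathbb{P}(\eta_t\cap W=\emptyset)=e^{-t}$ while $e^{-e^{-u}}\le e^{-t}$, so this range contributes at most $e^{-t}$, which is negligible compared with $\log t/\sqrt t$ for $t>e^2$.

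Next I would build the coupling. Starting from $k\,\mathbb{P}(\xi_t((u,\infty))=k)=\mathbb{E}[\xi_t((u,\infty))\mathbf{1}\{\xi_t((u,\infty))=k\}]$ and applying the Mecke formula, one notes that on $\{\eta_t(\mathbf{B}(x,r_u))=0\}$ the extra point $x\in W$ is isolated while the isolation status of every other point of $\eta_t$ is unchanged, so $\xi_t(\eta_t+\delta_x)((u,\infty))=1+\xi_t(\eta_t)((u,\infty))=1+\xi_t(\eta_t|_{\mathbf{B}(x,r_u)^c})((u,\infty))$ there. Since $\{\eta_t(\mathbf{B}(x,r_u))=0\}$ and $\xi_t(\eta_t|_{\mathbf{B}(x,r_u)^c})$ are measurable with respect to $\eta_t$ on disjoint sets, independence and $t\,e^{-tk_dr_u^d}=e^{-u}$ give $k\,\mathbb{P}(\xi_t((u,\infty))=k)=e^{-u}\,\mathbb{P}\bigl(\xi_t(\eta_t|_{\mathbf{B}(X,r_u)^c})((u,\infty))=k-1\bigr)$ for $X$ uniform on $W$ and independent of $\eta_t$. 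Thus \eqref{Mek} holds with $\lambda=e^{-u}$, $q_i\equiv0$ and $Z_{t,u}=\xi_t(\eta_t|_{\mathbf{B}(X,r_u)^c})((u,\infty))-\xi_t((u,\infty))$, and Theorem~\ref{Thmmm} applies. To estimate $\mathbb{E}|Z_{t,u}|=\mathbb{E}[Z_{t,u,+}]+\mathbb{E}[Z_{t,u,-}]$: removing $\mathbf{B}(X,r_u)$ only increases isolation, so $Z_{t,u,-}$ is at most the number of already‑isolated points of $\eta_t\cap W$ inside $\mathbf{B}(X,r_u)$, which has expectation $\le(u+\log t)e^{-u}/t$ by Mecke; and $y\notin\mathbf{B}(X,r_u)$ can become newly isolated only if $\|y-X\|<2r_u$, in which case $\|y-X\|\ge r_u$ forces (for $z\in\mathbf{B}(y,r_u)\cap\mathbf{B}(X,r_u)$) $2\langle z-X,y-X\rangle>\|z-X\|^2\ge0$, hence $\lambda_d(\mathbf{B}(y,r_u)\setminus\mathbf{B}(X,r_u))\ge\tfrac12 k_dr_u^d$ and $\mathbb{E}[Z_{t,u,+}]\le 2^d(u+\log t)e^{-u/2}/\sqrt t$. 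Combined with \eqref{fsxx} (using $1\wedge e^{-u}\le1$), this is exactly \eqref{ineq-th-1.3.3}.

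For \eqref{maxmintest} I would split $\sup_{u\in\mathbb{R}}$ into ranges. For $u\ge\tfrac12\log t$, Markov's inequality gives $\mathbb{P}(T_t>u)=\mathbb{P}(\xi_t((u,\infty))\ge1)\le e^{-u}$, so with $1-e^{-e^{-u}}\le e^{-u}$ one gets $|\mathbb{P}(T_t\le u)-e^{-e^{-u}}|\le2e^{-u}\le2t^{-1/2}$. For $0\le u<\tfrac12\log t$, inequality \eqref{rrrf} with $m=0$ reads $|\mathbb{P}(\xi_t((u,\infty))=0)-e^{-e^{-u}}|\le\mathbb{E}|Z_{t,u}|$, which is $O(\log t/\sqrt t)$ by the previous paragraph. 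When $u$ is close to $-\log t$ (so $\lambda=e^{-u}$ is at least of order $t/(\log t)^2$), the bound $\mathbb{E}|Z_{t,u}|\le C_d$ is uniform, whence $\mathrm{Var}(\xi_t((u,\infty)))=e^{-u}(1+\mathbb{E}[Z_{t,u}])\le C_d'e^{-u}$, and Chebyshev together with $\lambda e^{-\lambda}\le1$ gives $|\mathbb{P}(\xi_t((u,\infty))=0)-e^{-\lambda}|\le\mathbb{P}(\xi_t((u,\infty))=0)+e^{-\lambda}\le C_d''e^u=O(\log t/\sqrt t)$ there. The remaining intermediate range $-\log t<u<0$ is handled with \eqref{rrrf} for $m=1$: the term $\tfrac{1}{\lambda}\mathbb{E}|Z_{t,u}|=e^{u}\mathbb{E}|Z_{t,u}|$ is $O(\log t/\sqrt t)$ since $e^u\le1$, and because $X-Z_{t,u,-}=\min\bigl(\xi_t(\eta_t)((u,\infty)),\,\xi_t(\eta_t|_{\mathbf{B}(X,r_u)^c})((u,\infty))\bigr)$, it remains only to bound $\mathbb{E}[Z_{t,u,+}\mathbf{1}\{\xi_t(\eta_t)((u,\infty))=0\}]$ and $\mathbb{E}[Z_{t,u,-}\mathbf{1}\{\xi_t(\eta_t|_{\mathbf{B}(X,r_u)^c})((u,\infty))=0\}]$.

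\textbf{The main obstacle} is exactly this last step: extracting, uniformly in $\lambda=e^{-u}$, the extra smallness that the magic factors of \eqref{rrrf} encode. My plan is a near/far decomposition of $\xi_t$ around $X$. Both $Z_{t,u,+}$ and $Z_{t,u,-}$ are measurable with respect to $\eta_t$ on a bounded multiple of $\mathbf{B}(X,r_u)$, whereas the contribution to $\xi_t(\eta_t)((u,\infty))$ (respectively to $\xi_t(\eta_t|_{\mathbf{B}(X,r_u)^c})((u,\infty))$) coming from points of $\eta_t\cap W$ far enough from $X$ is measurable with respect to $\eta_t$ on the complementary region, hence independent of $Z_{t,u,\pm}$ given $X$, lies below $\xi_t(\eta_t)((u,\infty))$ (respectively $\xi_t(\eta_t|_{\mathbf{B}(X,r_u)^c})((u,\infty))$), has conditional mean $\ge\tfrac12 e^{-u}$ up to boundary effects of order $(\log t)/t$, and — on the range where $\mathbb{E}|Z_{t,u}|$ is small — conditional variance $O(e^{-u})$; Chebyshev then bounds the probability that this far part vanishes by $O(e^u)$, which is precisely the factor that, after conditioning on $X$, turns $\mathbb{E}[Z_{t,u,\pm}]$ into an $O(\log t/\sqrt t)$ quantity. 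Assembling the four regimes and tracking constants yields \eqref{maxmintest} with an explicit $C_K$; the only genuinely delicate estimate is the conditional second moment of the far part of $\xi_t$, which is a careful but routine Mecke‑type computation (and the spatial‑locality bookkeeping is where factors such as $4^d$ enter the final constant).
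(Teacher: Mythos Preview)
Your coupling and the derivation of \eqref{ineq-th-1.3.3} are exactly what the paper does: the same removed--ball construction $Z_t(u)=\xi_t(\eta_t|_{\mathbf{B}(X,r_u)^c})((u,\infty))-\xi_t((u,\infty))$ satisfies \eqref{Mek} with $\lambda=e^{-u}$, the same half--space bound $\lambda_d(\mathbf{B}(y,r_u)\setminus\mathbf{B}(X,r_u))\ge\tfrac12 k_d r_u^d$ for $\|y-X\|\ge r_u$ gives the $e^{-u/2}$ factor in the gain term, and the loss term is bounded by $(u+\log t)e^{-u}/t$. (Your written justification of the half--space inequality is a bit garbled, but the claim is correct: the half of $\mathbf{B}(y,r_u)$ on the far side of the hyperplane through $y$ orthogonal to $y-X$ lies entirely outside $\mathbf{B}(X,r_u)$.)

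For \eqref{maxmintest} your plan is correct in outline, but the paper's execution is considerably simpler in two places. First, the paper uses only three ranges: $u\ge 0$ (just the TV bound, since $ue^{-u/2}\le 1$), $u\in[-\log\log t,0)$ (the bound \eqref{rrrf} with $m=1$), and $u<-\log\log t$ (monotonicity plus the value at $-\log\log t$). In the middle range the ``loss'' piece satisfies $\mathbb{E}[Z''_{t,X}(u)]\le (u+\log t)e^{-u}/t\le (\log t)^2/t$, so one simply drops the indicator there; only the ``gain'' piece $\mathbb{E}[Z'_{t,X}(u)\mathbf{1}\{\xi_t((u,\infty))=0\}]$ needs the near/far decomposition. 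Second, and this is the main simplification, the paper does \emph{not} compute a second moment of the far part. Instead it bounds
\[
\mathbb{P}(\xi_{t,x}((u,\infty))=0)\ \le\ \mathbb{P}(\xi_t((u,\infty))=0)+\mathbb{P}\Big(\sum_{z\in\eta_t\cap\mathbf{B}(x,4r_u)}\mathbf{1}\{h_t(z,\eta_t)>u\}>0\Big)
\]
and then feeds the already--proven TV bound \eqref{ineq-th-1.3.3} back in to control $\mathbb{P}(\xi_t((u,\infty))=0)\le e^{-e^{-u}}+d_{TV}(\xi_t((u,\infty)),P_{e^{-u}})$. Using $e^{-e^{-u}}\le e^{u/2}$ for $u<0$ and a Markov bound on the second probability yields $\mathbb{P}(\xi_{t,x}((u,\infty))=0)\le C_d\,e^{u/2}$, which exactly cancels the $e^{-u/2}$ in $\mathbb{E}[Z'_{t,X}(u)]$. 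Your Chebyshev route on the far part would also produce a factor $O(e^{u})$ and hence works, but it requires a separate Mecke computation of the covariance structure of $\xi_{t,x}^{\text{far}}$; reusing \eqref{ineq-th-1.3.3} avoids that entirely and is how the explicit constant $2^{d+2}(4^d+2^d+2)+1$ is obtained.
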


The main achievement of Theorem \ref{Thm...} is the rate of convergence for the Kolmogorov distance in \eqref{maxmintest}. In \cite[Theorem 1, Equation (2a)]{MR3252817}, the weak convergence of $T_t$ to a Gumbel random variable is proven. For $d=2$ one obtains from the proof of \cite[Proposition 8]{CHENAVIER20142917} that for any fixed $u\in\R$  the  difference between $\mathbb{P}(T_t\leq u)$ and $\mathbb{P}(Y\leq u)$ behaves like $O(\log(t)/\sqrt{t})$, where the constant hidden in the big-O-notation depends on $u$. However this result does not permit to bound the difference between $\mathbb{P}(T_t\leq u)$ and $\mathbb{P}(Y\leq u)$ uniformly in $u\in\R$, whence it does not lead to a bound for the Kolmogorov distance. Note that \cite[Proposition 8]{CHENAVIER20142917} concerns the maximal inradii of planar Gauss-Voronoi tessellations, which are generated by a Poisson cluster process and include planar Poisson-Voronoi tessellations as a special case. For this model it is shown that for any fixed $u\in\R$, $\vert\mathbb{P}(T_t\leq u) - \mathbb{P}(Y\leq u)\vert$ behaves like $O(\log(t)^{-1/2})$, where the big-O-term depends on $u$.

For an underlying inhomogeneous Poisson process, the weak convergence of $\xi_t$ to a Poisson process and the weak convergence of $T_t$ to $Y$ are established in \cite[Section 3.2]{pianoforte2021criteria}, and for an underlying inhomogeneous binomial point process, the weak convergence of  $T_t$ to $Y$ is studied in \cite[Theorem 1]{MR731691}. As for the results stated in Subsection \ref{circ-rad-proof} about the minimal circumscribed radius, we believe that similar arguments as in this subsection could lead to comparable results with a different rate of convergence in $t$ for the maximal inradius of a Voronoi tessellation generated by an inhomogeneous Poisson processes under some constraints on the density.

Counting cells whose inradius is larger than a given value is equivalent to counting isolated vertices in random geometric graphs. The related problem of finding the longest edge of a $k$-nearest neighbor graph or a minimal spanning tree  is studied, for example, in \cite{MR1442317} or \cite[Chapter 8]{MR1986198} for underlying finite Poisson processes or binomial point processes, where one needs to take care of boundary effects.

Since the proof of Theorem \ref{Thm...} is based on Theorem \ref{Thmmm}, together with the second inequality of \eqref{fsxx} in Theorem \ref{Thmmm}, the same arguments used to show \eqref{ineq-th-1.3.3}  may also lead to  a bound on the  Wasserstein distance between $\xi_t((u,\infty))$ and $P_{e^{-u}}$. 

For the proof of Theorem \ref{Thm...} we introduce some notation. By $M_t$ we denote  the intensity measure of $\xi_t$. For $u> -\log(t)$, set
\begin{align}\label{v_t}
	v_t 
	= v_t(u)=\bigg(\frac{ u+\log(t)}{tk_d}\bigg)^{1/d}.
\end{align}
Then, for  $u> -\log(t)$ we have
\begin{align*}
	M_t((u,\infty)) 
	& = t \int_W \mathbb{E}\big[\mathbf{1}\{h_t(x,\eta_t+ \delta_x)> u\}\big] dx =t \int_W \mathbb{P}\big( \eta_t(\textbf{B}(x, v_t))=0\big) dx
	\\
	& = t \int_W e^{-t v_t^d k_d} d x 
	=  t e^{-u-\log(t)} 
	=  e^{-u}.
\end{align*}

Let $X$ be a uniformly distributed random vector in $W$ independent of $\eta_t$.
In the next proposition we show that for each $u > -\log(t)$, and for an opportune choice of a random ball $\mathbf{B}$ centered at $X$, the  random variable $\xi_t(\eta_t|_{\mathbf{B}^c})((u,\infty)) - \xi_t((u,\infty))$  satisfies \eqref{Mek} for $\xi_t((u,\infty))$.
\begin{proposition}\label{prp.Sec-3.3}
	For any $t>e$ and  $u> -\log(t)$,
	$$
	k\mathbb{P}(\xi_t((u,\infty))=k)  
	= M_t((u,\infty))  \mathbb{P}(\xi_t((u,\infty)) + Z_t(u)= k-1), \quad k\in\N,
	$$
	where the random variable $Z_t(u)$ is defined as
	$$
	Z_t(u)
	= \xi_t(\eta_t|_{\textbf{B}(X,v_t)^c})((u,\infty)) - \xi_t((u,\infty))
	$$
	with $v_t=v_t(u)$ given by \eqref{v_t}.
\end{proposition}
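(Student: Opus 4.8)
The plan is to follow the same template as in the proofs of Proposition \ref{prop2} and, more closely, Proposition \ref{prp-circ-Z}, with the circumscribed-radius functional replaced by the nearest-neighbour functional $h_t$. Here the geometry is simpler, because whether $h_t(x,\mu)>u$ holds depends on $\mu$ only through its restriction to the ball $\mathbf{B}(x,v_t)$. First I would rewrite
$$
k\,\mathbb{P}(\xi_t((u,\infty))=k)=\mathbb{E}\Big[\sum_{x\in\eta_t\cap W}\mathbf{1}\{h_t(x,\eta_t)>u\}\,\mathbf{1}\{\xi_t(\eta_t)((u,\infty))=k\}\Big]
$$
and apply the Mecke formula to turn this into $t\int_W\mathbb{E}\big[\mathbf{1}\{h_t(x,\eta_t+\delta_x)>u\}\,\mathbf{1}\{\xi_t(\eta_t+\delta_x)((u,\infty))=k\}\big]\,dx$, then use the identity $\mathbf{1}\{h_t(x,\eta_t+\delta_x)>u\}=\mathbf{1}\{\eta_t(\mathbf{B}(x,v_t))=0\}$ (valid up to null sets), which is exactly the identity already exploited in the computation of $M_t((u,\infty))$, with $v_t=v_t(u)$ as in \eqref{v_t}.

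The key geometric step is to show that, on the event $\{\eta_t(\mathbf{B}(x,v_t))=0\}$, one has $\eta_t=\eta_t|_{\mathbf{B}(x,v_t)^c}$ as measures, every point $y\in\eta_t$ lies at distance at least $v_t$ from $x$, and hence inserting $x$ does not change the threshold indicator of any such $y$, i.e.\ $\mathbf{1}\{h_t(y,\eta_t+\delta_x)>u\}=\mathbf{1}\{h_t(y,\eta_t)>u\}$ for all $y\in\eta_t\cap W$; the borderline case $\|x-y\|=v_t$ occurs only on a $\lambda_d$-null set of $x$ for fixed $\eta_t$ and may be discarded in the integral over $W$. Since on this event the inserted point $x$ itself contributes $1$ to $\xi_t(\eta_t+\delta_x)((u,\infty))$, this yields, for $\lambda_d$-a.e.\ $x\in W$,
$$
\mathbf{1}\{\eta_t(\mathbf{B}(x,v_t))=0\}\,\mathbf{1}\{\xi_t(\eta_t+\delta_x)((u,\infty))=k\}=\mathbf{1}\{\eta_t(\mathbf{B}(x,v_t))=0\}\,\mathbf{1}\{\xi_t(\eta_t|_{\mathbf{B}(x,v_t)^c})((u,\infty))=k-1\}.
$$

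Next I would decouple using the independence of the restrictions of a Poisson process to disjoint measurable sets: the event $\{\eta_t(\mathbf{B}(x,v_t))=0\}$ is measurable with respect to $\eta_t|_{\mathbf{B}(x,v_t)}$, while $\xi_t(\eta_t|_{\mathbf{B}(x,v_t)^c})((u,\infty))$ is measurable with respect to $\eta_t|_{\mathbf{B}(x,v_t)^c}$, so the expectation factorises; combined with $\mathbb{P}(\eta_t(\mathbf{B}(x,v_t))=0)=e^{-tk_dv_t^d}=e^{-u}/t$ this gives
$$
k\,\mathbb{P}(\xi_t((u,\infty))=k)=e^{-u}\int_W\mathbb{P}\big(\xi_t(\eta_t|_{\mathbf{B}(x,v_t)^c})((u,\infty))=k-1\big)\,dx.
$$
Finally, since $X$ is uniform on $W$ with $\lambda_d(W)=1$ and independent of $\eta_t$, the integral equals $\mathbb{P}(\xi_t(\eta_t|_{\mathbf{B}(X,v_t)^c})((u,\infty))=k-1)=\mathbb{P}(\xi_t((u,\infty))+Z_t(u)=k-1)$ by the definition of $Z_t(u)$, and recalling $M_t((u,\infty))=e^{-u}$ completes the proof. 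I expect the main obstacle to be the middle paragraph: carefully justifying the pathwise identification $\eta_t=\eta_t|_{\mathbf{B}(x,v_t)^c}$ on the relevant event and checking that the threshold indicators of all the (finitely many) nearby points $y$ are unaffected by inserting $x$, together with the harmless treatment of the borderline sphere; once this is in place, the Mecke formula and the independence of Poisson restrictions make the rest routine.
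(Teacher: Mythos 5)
Your proposal is correct and follows exactly the same route as the paper's proof: Mecke formula, the identification of $\{h_t(x,\eta_t+\delta_x)>u\}$ with $\{\eta_t(\mathbf{B}(x,v_t))=0\}$, the pathwise replacement of $\eta_t+\delta_x$ by $\eta_t|_{\mathbf{B}(x,v_t)^c}$ on that event, independence of Poisson restrictions to disjoint sets, and the identification of the integral over $W$ with the randomization by $X$. The only difference is that you spell out the "key geometric step" (and the treatment of the $\|x-y\|=v_t$ null set) in more detail than the paper, which handles it implicitly.
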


\begin{proof}
	Let $B=(u,\infty)$ with $u>-\log(t)$. The Mecke equation yields for $k\in\mathbb{N}$ that
	\begin{align*}
		k\mathbb{P}(\xi_t(B)=k)  
		&= t \int_{W} \mathbb{E}\big[ \mathbf{1}\{h_t(x,\eta_t + \delta_x)> u\}\mathbf{1}\{\xi_t(\eta_t + \delta_x)(B)=k\}\big] d x.
	\end{align*}
	Since $h_t(x,\eta_t + \delta_x)>u$ if and only if $\eta_t(\textbf{B}(x, v_t))=0$, the right-hand side equals
	\begin{align*}
		& t \int_{W} \mathbb{E}\big[ \mathbf{1}\{\eta_t(\textbf{B}(x, v_t))=0\}\mathbf{1}\{\xi_t(\eta_t|_{\textbf{B}(x, v_t)^c})(B)=k-1\}\big] d x
		\\
		& = t \int_{W} \mathbb{P}\big( \eta_t(\textbf{B}(x, v_t))=0\big) \mathbb{E}\big[\mathbf{1}\{\xi_t(\eta_t|_{\textbf{B}(x,v_t)^c})(B)=k-1\}\big] d x
		\\
		& = e^{-u}\int_{W}\mathbb{P}\big(\xi_t(\eta_t|_{\textbf{B}(x,v_t)^c})(B)=k-1\big) d x.
	\end{align*}
	Hence, elementary arguments lead to
	\begin{align*}
		k\mathbb{P}(\xi_t(B)=k)
		& = M_t(B) \mathbb{P}\big(\xi_t(\eta_t|_{\textbf{B}(X,v_t)^c})(B)=k-1\big) 
		\\
		& = M_t(B)\mathbb{P}\big( \xi_t(B) + Z_t(u) =k-1\big),
	\end{align*}
	which is the desired conclusion.
\end{proof}

\begin{proof}[Proof of Theorem \ref{Thm...}]
Suppose $u > - \log(t)$ and let $Z_t(u)$ be as in Proposition \ref{prp.Sec-3.3}. We can rewrite $Z_t(u)$ as
	\begin{align*}
	Z_{t}(u) & =  \xi_t(\eta_t|_{\textbf{B}(X,v_t)^c})((u,\infty)) - \xi_t((u,\infty)) \\
	& = \sum_{z\in \eta_t\cap W \cap \mathbf{B}(X,2v_t) \cap \mathbf{B}(X,v_t)^c } \mathbf{1}\{ h_t(z,\eta_t|_{\mathbf{B}(X,v_t)^c})>u\} - \mathbf{1}\{h_t(z,\eta_t)>u\} \\
	& \quad - \sum_{z\in \eta_t\cap \mathbf{B}(X,v_t) \cap W} \mathbf{1}\{ h_t(z,\eta_t)>u\} \\
	& =: Z_{t,X}'(u) - Z_{t,X}''(u),
	\end{align*} 	
	where $Z_{t,X}'(u)$ and $Z_{t,X}''(u)$ are non-negative. For a fixed $x\in W$, the Mecke formula and short computations yield
		\begin{equation}\label{eq: expecZinr-1}\begin{split}
		\mathbb{E}[Z_{t,X}'(u)]	& \leq \mathbb{E}\Big[ \sum_{z\in\eta_t  \cap \textbf{B}(x,2v_t)\cap \textbf{B}(x,v_t)^c} \mathbf{1}\big\{h_t (z,\eta_t|_{\textbf{B}(x,v_t)^c})> u\big\} \Big] 
			\\
			& = t\int_{\textbf{B}(x,2v_t)\cap \textbf{B}(x,v_t)^c} \mathbb{P}\big( \eta_t(\textbf{B}(z, v_t)\cap \textbf{B}(x,v_t)^c )=0\big) dz
			\\
			& \leq t \int_{\textbf{B}(x,2v_t)\cap \textbf{B}(x,v_t)^c}  e^{-t v_t^d k_d/2}dz 
			\leq 2^{d}(u+\log(t))e^{- (u+ \log(t))/2 } 
			= 2^{d}\frac{u+\log(t)}{e^{u/2} \sqrt{t} }
	\end{split}\end{equation}
	and, similarly,
	\begin{equation}\label{eq: expecZinr-2}\begin{split} 
		\mathbb{E}[Z_{t,X}''(u)]	& \leq \mathbb{E}\Big[ \sum_{z\in\eta_t \cap  \textbf{B}(x,v_t)} \mathbf{1}\big\{h_t (z,\eta_t)> u \big\} \Big]
			=	  t\int_{\textbf{B}(x,v_t)} \mathbb{P}\big( \eta_t(\textbf{B}(z, v_t))=0\big) dz
			\\
			& \leq  t \int_{\textbf{B}(x,v_t)}  e^{-t v_t^d k_d}dz
			\leq (u+\log(t))e^{- u- \log(t) }
			=  \frac{u+\log(t)}{ e^{u} t}.
	\end{split}\end{equation}
	It follows from the triangle inequality that
	$$
	\mathbb{E}[|Z_{t}(u)|] 
	\leq 
	 2^{d}\frac{u+\log(t)}{e^{u/2} \sqrt{t} } + \frac{u+\log(t)}{ e^{u} t}.
	$$
	Then, by the first inequality of \eqref{fsxx} in Theorem \ref{Thmmm}, we obtain \eqref{ineq-th-1.3.3}.
	
	Let us now show \eqref{maxmintest}. We consider the cases $u\geq 0$, $u\in[-\log(\log(t)),0)$ and $u<-\log(\log(n))$ separately. Because of $u e^{-u}\leq 1$ and $u e^{-u/2}\leq 1$ for $u\geq 0$ and $\log(t)\geq 1$, by \eqref{ineq-th-1.3.3} we have
	$$
	d_{TV}\big(\xi_t((u,\infty)), P_{e^{-u}}\big)	\leq (2^{d+1}+2) \frac{\log(t)}{ \sqrt{t} } 
	$$
	for $u\geq 0$, which proves \eqref{maxmintest} for $u\geq 0$.
	
	In the following let $u\in [-\log(\log(t)),0)$ be fixed.   Since $Z_t(u)=Z_{t,X}'(u)-Z_{t,X}''(u)$ and the terms on the right-hand side are both non-negative, we obtain that
$$
Z_t(u)_+\leq Z_{t,X}'(u) \quad \text{and} \quad Z_t(u)_-\leq Z_{t,X}''(u).
$$
Combining these inequalities and \eqref{rrrf} in Theorem \ref{Thmmm} with $m=1$ establishes 
\begin{align*}
 &	\vert \mathbb{P}(T_t \leq u) - \mathbb{P}(P_{e^{-u}} =0) \vert =	\vert \mathbb{P}(\xi_t((u,\infty))=0) - \mathbb{P}(P_{e^{-u}} =0) \vert
	\\
	&\leq e^{u}\mathbb{E}[ \vert Z_t(u) \vert ] +  \mathbb{E}[ \vert Z_t(u) \vert \mathbf{1}\{ \xi_t((u,\infty))- Z_t(u)_{-} = 0 \} ] 
	\\
	&
	\leq e^u\mathbb{E}[ \vert Z_t(u) \vert ]+  \mathbb{E}[ Z_{t,X}'(u) \mathbf{1}\{ \xi_t((u,\infty))=0 \} ]+ \mathbb{E}[  Z_{t,X}''(u) ].
\end{align*}
Moreover,  by \eqref{eq: expecZinr-1} and \eqref{eq: expecZinr-2}  we have
\begin{equation*}
	\mathbb{E}[  Z_{t,X}'(u) ]\leq 2^{d}\frac{u+\log(t)}{e^{u/2} \sqrt{t} } \leq 2^{d} \frac{\log(t)}{e^{u/2} \sqrt{t} } \ \text{and} \ \mathbb{E}[  Z_{t,X}''(u) ]\leq \frac{u+\log(t)}{ e^{u} t}\leq  \frac{(\log(t))^2}{t} \leq \frac{\log(t)}{\sqrt{t}}.
\end{equation*}
Thus the identity $Z_t(u)=Z_{t,X}'(u)- Z_{t,X}''(u)$ with $Z_{t,X}'(u), Z_{t,X}''(u)\geq 0$ implies that
\begin{align}\label{eq:Neg-u-step1}
\vert \mathbb{P}(T_t \leq u) - \mathbb{P}(P_{e^{-u}} =0) \vert
\leq (2^{d} +2)\frac{\log(t)}{ \sqrt{t} }+  \mathbb{E}[  Z_{t,X}'(u) \mathbf{1}\{ \xi_t((u,\infty))=0 \} ].
\end{align}
 For $x\in W$ we define
$$
\xi_{t,x}((u,\infty))=\sum_{z\in\eta_t \cap W \cap  \textbf{B}(x, 4 v_t)^c} \mathbf{1}\{h_t (z,\eta_t)>u\}.
$$
Since, for $x\in W$,  $\mathbf{1}\{\xi_{t}((u,\infty))=0\}\leq \mathbf{1}\{\xi_{t,x}((u,\infty))=0\}$ and $Z_{t,x}'(u)$ and $\mathbf{1}\{\xi_{t,x}((u,\infty))=0\}$ are independent, we have
\begin{equation}\begin{split}\label{eq:bndZ=0-Inr}
	\mathbb{E}[Z_{t,X}'(u) \mathbf{1}\{\xi_t((u,\infty))=0\}] & \leq \int_W \mathbb{E}[Z_{t,x}'(u) \mathbf{1}\{\xi_{t,x}((u,\infty))=0\}]  dx \\
	& = \int_W \mathbb{E}[Z_{t,x}'(u)] \mathbb{P}(\xi_{t,x}((u,\infty))=0)  dx.
\end{split}\end{equation}
For $x\in W$, the Markov and the triangle inequalities, \eqref{ineq-th-1.3.3}  and  $e^{u/2}\sqrt{t}\geq 1$ imply that
\begin{align*}
	\mathbb{P}(\xi_{t,x}((u,\infty))=0) 
	& \leq \mathbb{P}(\xi_t((u,\infty))=0) + \mathbb{P}\Big(\sum_{z\in\eta_t \cap  \textbf{B}(x, 4 v_t)} \mathbf{1}\{h_t (z,\eta_t)>u\}>0\Big)
	\notag
	\\
	& \leq 2^{d} \frac{\log(t)}{ e^{u/2} \sqrt{t} } +  \frac{\log(t)}{e^{u} t} + e^{-e^{-u}} + \mathbb{E}\Big[\sum_{z\in\eta_t \cap  \textbf{B}(x, 4 v_t)} \mathbf{1}\big\{h_t (z,\eta_t) > u \big\} \Big]
	\notag
	\\
	& \leq (2^{d}+1) \frac{\log(t)}{e^{u/2} \sqrt{t} } + e^{-e^{-u}} + \mathbb{E}\Big[\sum_{z\in\eta_t \cap  \textbf{B}(x, 4 v_t)} \mathbf{1}\big\{h_t (z,\eta_t) > u \big\} \Big].
\end{align*}
Similar arguments as used in \eqref{eq: expecZinr-2} and $e^{u/2}\sqrt{t}\geq 1$ lead to
\begin{align*}
	\mathbb{E}\Big[\sum_{z\in\eta_t \cap  \textbf{B}(x, 4 v_t)} \mathbf{1}\big\{h_t (z,\eta_t) > u \big\} \Big]
	&\leq \frac{ 4^d (u+\log(t))}{e^{u} t} \leq \frac{ 4^d \log(t)}{e^{u/2}\sqrt{t}} .
\end{align*}
Since $\log(t) e^u\geq 1$ and $\frac{\log(t)^2}{\sqrt{t}} \leq 4$ for $t>e^2$, we obtain
$$
\frac{\log(t)}{e^{u/2} \sqrt{t} } \leq \frac{\log(t)^2 e^u}{e^{u/2} \sqrt{t} } \leq \frac{\log(t)^2}{\sqrt{t} } e^{u/2} \leq 4e^{u/2}.
$$
Together with $\exp(-e^{-u}- u/2) \leq 1,$ which follows from $u<0$, we have shown
$$
\mathbb{P}(\xi_{t,x}((u,\infty))=0) \leq 4( 4^d +2^d+1) e^{u/2} + e^{u/2} \leq (4( 4^d +2^d+1)+1) e^{u/2}
$$
so that, by \eqref{eq: expecZinr-1} and \eqref{eq:bndZ=0-Inr},
\begin{align*}
\mathbb{E}[Z_{t,X}'(u) \mathbf{1}\{\xi_t((u,\infty))=0\}]
& \leq (4( 4^d +2^d+1)+1) e^{u/2} \frac{2^{d}\log(t)}{e^{u/2}\sqrt{t}}
\\
& = (2^{d+2}( 4^d +2^d+1)+2^d) \frac{\log(t)}{\sqrt{t}}.
\end{align*}
Combining this with \eqref{eq:Neg-u-step1} leads to
\begin{equation}\label{gac}
	\begin{split}
		\left\vert \mathbb{P}(T_t
		\leq u ) - e^{-e^{-u}} \right\vert & \leq (2^{d+2}( 4^d +2^d+1)+2^d + 2^d+2) \frac{\log(t)}{\sqrt{t}} \\
		& \leq 2^{d+2}( 4^d +2^d+2) \frac{\log(t)}{\sqrt{t}},
	\end{split}
\end{equation}
which establishes \eqref{maxmintest} for $u\in[-\log(\log(t)),0)$.

	Finally for $u<-\log(\log(t))$ we have
	$$
	\mathbb{P}( T_t
	\leq u )
	\leq  \mathbb{P}( T_t
	\leq -\log(\log(t)) ),
	$$
	which by \eqref{gac} and the triangle inequality is bounded by 
	$$ 
	 	2^{d+2}( 4^d +2^d+2)\frac{\log(t)}{\sqrt{t}} + \frac{1}{t}.
	$$
	Therefore elementary arguments lead to
	\begin{align*}
		& \underset{u < - \log(\log(t)) }{\sup}\,\big\vert \mathbb{P} (T_t
		\leq u ) - e^{-e^{-u}} \big\vert 
		\leq [2^{d+2}( 4^d +2^d+2)+1]\frac{\log(t)}{\sqrt{t}},
	\end{align*}
which concludes the proof of \eqref{maxmintest}.
\end{proof}

\begin{remark}
Note that the integral in the middle of \eqref{eq: expecZinr-1} cannot be bounded with a better exponent for $t$. Indeed, using substitution, we can rewrite the integral as
$$
\frac{u + \log(t)}{k_d} \int_{\textbf{B}(0,2)\cap\textbf{B}(0,1)^c} e^{-(u + \log(t)) \frac{\lambda_d(\mathbf{B}(y,1)\cap\mathbf{B}(0,1)^c)}{k_d}} dy.
$$
For any sufficiently small $\varepsilon>0$ there exists a set $A\subset \textbf{B}(0,2)\cap\textbf{B}(0,1)^c$ with $\lambda_d(A)>0$ such that the ratio in the exponent is at least $(1+\varepsilon)/2$ for all $y\in A$. This provides a lower bound of the order $\log(t) t^{-(1+\varepsilon)/2}$.
\end{remark}

\section*{Acknowledgments}
This research was supported by the Swiss National Science Foundation (grant number 200021\_175584). We would like to thank Fraser Daly for some valuable comments.

\bibliography{bibliography}

\end{document}